\titleformat{\section}[block]{\small\bfseries\filcenter}{\thesection}{1em}{}
\titleformat{\subsection}[block]{\small\bfseries\filcenter}{\thesubsection}{1em}{}
\newtheorem{thm}{Theorem}[section]
\newtheorem{definition}[thm]{Definition}
\newtheorem{lem}[thm]{Lemma}
\newtheorem{prop}[thm]{Proposition}
\newtheorem{cor}[thm]{Corollary}
\newtheorem{rmk}[thm]{Remark}
\newtheorem{conj}[thm]{Conjecture}
\newcommand{\Z}{\mathbb{Z}}
\newcommand{\CP}{\mathbb{C}\mathbb{P}^}
\newcommand{\R}{\mathbb{R}}
\newcommand{\C}{\mathbb{C}}
\newcommand{\ddb}{\sqrt{-1}\partial\bar\partial}
\newcommand{\Ima}{\text{Im }}
\newcommand{\al}{\alpha}
\newcommand{\dt}{\delta}
\newcommand{\ep}{\epsilon}
\DeclareMathOperator{\bl}{Bl}
\title{A gluing construction of constant scalar curvature K\"ahler metrics of Poincar\'e type}
\author{Yueqing Feng}
\date{}
\newcommand{\Addresses}{{
 \bigskip
\footnotesize
Yueqing Feng, \textsc{Department of Mathematics, University of California, Berkeley, CA 94720, USA.}\par\nopagebreak
\textit{E-mail address: }\href{fyq@berkeley.edu}{\texttt{fyq@berkeley.edu}}}}
\begin{document}

\maketitle

\begin{abstract}
   We consider a compact K\"ahler manifold admitting a constant scalar curvature K\"ahler metric and with no nontrivial holomorphic vector fields. After blowing up the manifold at finitely many points, we prove the existence of constant scalar curvature K\"ahler metrics on the complement of the exceptional divisors, with Poincar\'e-type singularities along them.
\end{abstract}

\section{Introduction}

In this article, we denote by $(X, \omega_X)$ a compact constant scalar curvature K\"ahler manifold of dimension $n$. This means the scalar curvature $s(\omega_X)$ of $\omega_X$ given by

$$s(\omega_X)\wedge\omega_X^n=n\cdot Ric(\omega_X)\wedge\omega_X^{n-1}$$

is constant. In this article, we construct constant scalar curvature K\"ahler manifold with Poincar\'e type singularities. The local model for the Poincar\'e type singularity is the standard Poincar\'e cusp metric

$$\omega_{\Delta^*}=\frac{\sqrt{-1}\,dz\wedge d\bar z}{(|z|\log|z|)^2}$$

on the punctured unit disk $\Delta^*=\{0<|z|<1\}$. Geometrically, for a simple normal crossings divisor $D$, Poincar\'e type K\"ahler metrics on $X\setminus D$ are complete K\"ahler metrics whose asymptotic behaviour near $D$ is locally modelled on $\omega_{\Delta^*}$ in the normal directions to $D$. For the precise definition used in this paper, see Definition~\ref{PT metric} in Section~\ref{section 2}.

The main theorem of this article is the following.

\begin{thm}\label{Main theorem}
    Let $X$ be a compact K\"ahler manifold of dimension $n$ with no non-trivial holomorphic vector fields, meaning that there is no non-zero holomorphic vector field that vanishes somewhere on $X$. Suppose $\omega_X$ is a constant scalar curvature K\"ahler (cscK) metric on $X$. Fix finitely many points $p_1, \cdots, p_k\in X$, let $Bl_{p_1, \cdots, p_k}X$ be the blow-up of $X$ at the chosen points, $\pi$ be the blow-down map, and $E_i\coloneqq \pi^{-1}(p_i)$ be the exceptional divisors. Then there exists a small constant $\epsilon_0>0$ such that for all $\epsilon \in (0, \epsilon_0)$, the manifold $Bl_{p_1, \cdots, p_k}X \setminus \bigcup_{i=1}^k E_i$ admits a cscK metric with Poincar\'e type singularity along $E_i$ within the given K\"ahler class 

     \begin{equation}\label{equation Kahler class}
        \pi^*[\omega_X]-\epsilon^2\displaystyle\sum_{i=1}^k[E_i].
    \end{equation} 
    
Here $[E_i]$ denotes the Poincar\'e dual of the $E_i$.
\end{thm}

We point out some concrete situations where Theorem~\ref{Main theorem} applies. The hypotheses are satisfied by any compact cscK manifold with discrete automorphism group. In particular, if $c_1(X)<0$, then the Aubin--Yau theorem gives a K\"ahler--Einstein metric of negative Ricci curvature, and the automorphism group is discrete. Thus compact K\"ahler manifolds with $c_1(X)<0$ provide a large class of examples to which the theorem applies. For further examples satisfying the same hypotheses, we refer the reader to the detailed discussion in \cite[Section~8.1]{arezzo2006blowing}.

The main theorem adds new examples to the existing list of cscK and extremal K\"ahler metrics of Poincar\'e type. We recall here some important examples: (a) the K\"ahler-Einstein metrics of Poincar\'e type, whose existence were shown by Tian--Yau \cite{TY87}, Kobayashi \cite{Kob84} and Cheng-Yau \cite{CY80}; (b) the toric extremal K\"ahler metrics of Poincar\'e type studied by Bryant \cite{Bry01} and Abreu \cite{Abr01}; (c) suitable blow-ups of (a) and (b) as discussed in \cite{Sek18}. 

The interest in canonical metrics of Poincar\'e type comes from several aspects:

\paragraph{Degeneration of smooth extremal K\"ahler metrics}

In complex dimension one, Poincar\'e cusp singularities arise naturally from the degeneration of hyperbolic Riemann surfaces \cite{wolpert1990hyperbolic,wolpert2007cusps,obitsu2008grafting}. In the Deligne--Mumford compactification, a degenerating family of compact Riemann surfaces of genus at least two may converge to a stable nodal curve, locally modeled near a node by the plumbing family $xy=t$. As $t\rightarrow0$, the degenerating hyperbolic metric near the node locally splits into complete finite-area hyperbolic metrics with cusp ends, each modeled on the Poincar\'e cusp metric $\omega_{\Delta^*}= \frac{ \sqrt{-1}dz\wedge d\bar{z}}{(|z|\log|z|)^2}$ on the punctured unit disc $\Delta^*$ in $\C$.

In higher dimensions, a related motivation comes from the compact cscK/extremal existence problem. In the toric setting, Donaldson \cite{Don02Toric} proposed that, when compact extremal metrics fail to exist, minimizing procedures such as the Mabuchi functional should lead to a decomposition of the moment polytope into stable pieces. The new facets carry zero boundary measure, and the corresponding pieces are expected to admit complete extremal K\"ahler metrics on complements of toric divisors. Apostolov--Auvray--Sektnan \cite{AAS17} studied the relation between these Donaldson complete toric metrics and Poincar\'e type asymptotics. Their results show that extremal Poincar\'e type metrics form a natural subclass of Donaldson metrics, and hence fit naturally into Donaldson's conjectural limiting picture for the compact extremal problem.

Poincar\'e type metrics form a natural subclass of Donaldson metrics
A concrete example of this phenomenon was given by Sz\'ekelyhidi \cite{szekelyhidi2009calabi}. For a ruled surface with a family of polarisations $\mathcal L_m$, he constructs a sequence of smooth extremal K\"ahler metrics whose degeneration yields complete extremal metrics of Poincar\'e type. These metrics arise from minimizing the Calabi functional within the momentum construction ansatz \cite{HS02}, along a path between stable and unstable polarisations.

\paragraph{Connections with conical metrics}   

Poincar\'e type metrics are complete and are connected to the smooth metrics by a family of incomplete conical metrics. Intuitively, as the cone angle approaches $2\pi$, we approach the smooth metrics; while when it approaches $0$, we approach the Poincar\'e type cuspidal singularity. On the punctured unit disc $\Delta^*$, the conical metrics $\omega_{\beta}\coloneqq  \frac{\beta^2 \sqrt{-1}dz\wedge d\bar{z}}{|z|^{2(1-\beta)}(1-|z|^{2\beta})^2}$ with cone angle $2\pi\beta$ have constant negative curvature, and converges pointwise to the Poincar\'e cusp metric $\omega_{\Delta^*}$. 

In higher dimension, Guenancia \cite{Gue15} generalizes this observation and shows that for a compact K\"ahler manifold $X$ and a divisor for which $K_X+D$ is ample, for a family of negative K\"ahler-Einstein metrics $\omega_{\beta}$ with cone angle $2\pi\beta$ along $D$, as $\beta$ goes to zero, $\omega_{\beta}$ converges to a Poincar\'e type metric $\omega_0$ in $C^{\infty}_{loc}(X\backslash D)$. In particular, $(X\backslash D, \omega_{\beta})$ converges to $(X\backslash D, \omega_0)$ in the pointwise Gromov-Hausdorff topology. 
 
\paragraph{Yau-Tian-Donaldson type conjecture for Poincar\'e type metrics}

First formulated by Sz\'ekelyhidi, later refined by Apostolov-Auvray-Sektnan, the Yau-Tian-Donaldson type conjecture for extremal metrics of Poincar\'e type says the following.

    \begin{conj}(Yau-Tian-Donaldson type conjecture for Poincar\'e type metrics \cite{Sze06}, \cite{AAS17}) \label{conj: YTD for PT}

    $(X, L, D)$ admits an extremal K\"ahler metric of Poincar\'e type on $X\backslash D$ if and only if 

    \begin{enumerate}[(i)]
        \item Relative Donaldson--Futaki invariant of any test configuration for the triple $(X, L, D)$ is non-negative, with equality holding if and only if the test configuration is product. 
        \item Sz\'ekelyhidi's numerical constraint holds.
        \item The restriction of extremal vector field of $X$ for $c_1(L)$, when restricted to $D$, is equal to the extremal vector field of $D$ in $c_1(L)|_D$.
    \end{enumerate}
    \end{conj}

The relationship between the relative K-stability (resp. K-stability) and the existence of extremal K\"ahler (resp. cscK) metric has been widely studied. Auvray \cite{Auv13-1, Auv17} proved the necessity of the Sz\'ekelyhidi numerical constraint and the extremal vector field condition. For $n=1$, given a log Riemann surface $(X, D= \sum_{i=1}^dp_i)$, \cite{sun2021projective} established a direct relationship between the complete cuspidal metric on $X\backslash D$ and the K-stability of the pair using quantization technique. Later, \cite{Aoi22} shows if there is no non-trivial holomorphic vector field on $D$ and the connected component of the identity in the holomorphic automorphism group preserving $D$ is trivial, then $K$-semistability of the pair is a necessary condition. In the toric surface setting, Apostolov--Auvray--Sektnan \cite{AAS17}
formulate a refined version of Conjecture~\ref{conj: YTD for PT} and verify it for all reduced
torus-invariant divisors on the Hirzebruch surfaces $\mathbb F_m$, $\mathbb{CP}^1\times\mathbb{CP}^1$, and $\mathbb{CP}^2$.

\medskip

There is another perspective of the main theorem. For fixed point $p\in X$, let $d_p(z)$ be the distance function from $z$ to $p$. Given $\lambda$ sufficiently large, we define the $(1, 1)$-form

\begin{equation}\label{reference metric}
   \omega_h\coloneqq \omega_X+\displaystyle\sqrt{-1}\partial\bar{\partial}(\log d_p^2(z) -\log(\lambda-\log d_p^2(z))).
\end{equation}

We say that $\omega_{PT}\coloneqq \omega_X+\sqrt{-1}\partial\bar{\partial}\varphi$ is a metric has Poincar\'e-Mok-Yau type singularity if it satisfies the two conditions in Definition~\ref{PT metric} with the sup of norms taken over $X\backslash \{p\}$. This definition can be easily extended to a compact manifold with finitely many points $p_1, \cdots, p_k$ removed, simply by choosing sufficiently large constants $\lambda_1, \cdots, \lambda_k$ such that 

$$\omega_h\coloneqq \omega_X+\ddb\displaystyle\sum_{i=1}^k(\log d_{p_i}^2(z)-\log(\lambda_i-\log d_{p_i}^2(z)))$$

is a positive $(1,1)$-form on $X\backslash\{p_1, \cdots, p_k\}$. This type of singularity is discussed in \cite{mokyau1983completeness, FYZ16}. In that paper, the Poincar\'e-Mok-Yau asymptotic is defined in a more general setting in the higher-codimensional complements of a manifold, and the definition requires only the quasi-isometric property stated in Definition~\ref{PT metric}.

\medskip

The main theorem thus implies the existence of constant scalar curvature K\"ahler metrics on the punctured manifold $X\backslash\{p_1, \cdots, p_k\}$ with complete Poincar\'e-Mok-Yau type singularity at each $p_i$. From this perspective, we compare the main theorem with the following proposition.

\begin{prop}(Gao-Yau-Zhou, \cite{GYZ})
   Given $X$ a compact K\"ahler manifold and $S\subset X$ a subvariety with codimension higher than or equal to two, then there are no complete K\"ahler-Einstein metrics on $X\backslash S$.  
\end{prop}

Then the main theorem implies that it is \textbf{\textit{not}} true in the cscK setting. 

\medskip

The scalar curvature of the resulting cscK metric in the main theorem is not the same as that of $\omega_X$, see Section \ref{subsection 9.1}. As an application, the main theorem produces scalar-flat K\"ahler metrics of Poincar\'e type.

\begin{cor}\label{cor SFK}
    Given a compact K\"ahler manifold $X$ of dimension $n$ with no non-trivial holomorphic vector fields, assume it is endowed with a scalar-flat K\"ahler(SFK) metric $\omega_X$. Assume $c_1(X)\ne 0$, given finitely many points $p_1, \cdots, p_k\in X$, then the manifold $Bl_{p_1, \cdots, p_k}X \setminus \bigcup_{i=1}^k E_i$ admits a scalar-flat K\"ahler metric with Poincar\'e type singularity along $E_i$ within the given K\"ahler class~(\ref{equation Kahler class}).
\end{cor}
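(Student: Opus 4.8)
The plan is to reduce the corollary to a single cohomological balancing condition, and then to feed Theorem~\ref{Main theorem} a cscK metric on $X$ lying in a K\"ahler class close to, but distinct from, $[\omega_X]$. First I would record the scalar-flatness criterion. A Poincar\'e type cscK metric $\omega$ on $\bl_{p_1,\dots,p_k}X\setminus\bigcup_i E_i$ has constant scalar curvature, so it is scalar-flat precisely when that constant is zero, and since $\omega$ is complete of finite volume the constant is pinned by integration: up to a fixed positive factor it equals $\int s(\omega)\,\omega^n$ divided by the volume. By the Chern--Weil theory for cusp metrics carried out in Section~\ref{subsection 9.1}, the Ricci form of a metric with Poincar\'e type singularity along $\bigcup_i E_i$ pairs with $\omega^{n-1}$ to give the \emph{log} first Chern class rather than the ordinary one, so that
$$\int s(\omega)\,\omega^n \;\propto\; \Big(c_1(\bl_{p_1,\dots,p_k}X)-\sum_{i=1}^k[E_i]\Big)\cdot[\omega]^{n-1}.$$
Using $c_1(\bl_{p_1,\dots,p_k}X)=\pi^*c_1(X)-(n-1)\sum_i[E_i]$, the intersection rules $E_i^n=(-1)^{n-1}$, $[E_i]\cdot[E_j]=0$ for $i\ne j$, and the fact that $\pi^*$ of any positive-degree class restricts trivially to each $E_i$, I would compute for the class $[\omega_\epsilon]=\pi^*[\omega']-\epsilon^2\sum_i[E_i]$ that
$$\Big(\pi^*c_1(X)-n\sum_i[E_i]\Big)\cdot[\omega_\epsilon]^{n-1}=c_1(X)\cdot[\omega']^{n-1}-nk\,\epsilon^{2(n-1)}.$$
With $[\omega']=[\omega_X]$ the first term vanishes (scalar-flatness of $\omega_X$), leaving a strictly negative quantity; this explains why the naive class never yields a scalar-flat metric and forces me to perturb the base class so that $c_1(X)\cdot[\omega']^{n-1}=nk\,\epsilon^{2(n-1)}>0$.

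This is exactly where the hypothesis $c_1(X)\ne 0$ is used. By the Hard Lefschetz theorem the cup product $[\omega_X]^{n-2}\cup(\,\cdot\,)\colon H^{1,1}(X)\to H^{n-1,n-1}(X)$ is an isomorphism, so $c_1(X)\ne 0$ gives $c_1(X)\cdot[\omega_X]^{n-2}\ne 0$, and by nondegeneracy of the Poincar\'e pairing there is a real $(1,1)$-class $\beta$ with $c_1(X)\cdot[\omega_X]^{n-2}\cdot\beta>0$. Since the K\"ahler cone is open, $[\omega_X]+t\beta$ is K\"ahler for all small $t>0$, and the smooth function $t\mapsto c_1(X)\cdot([\omega_X]+t\beta)^{n-1}$ vanishes at $t=0$ with strictly positive derivative $(n-1)\,c_1(X)\cdot[\omega_X]^{n-2}\cdot\beta$. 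By the intermediate value theorem, for every sufficiently small $\epsilon>0$ I can select a K\"ahler class $[\omega'_\epsilon]\to[\omega_X]$ with $c_1(X)\cdot[\omega'_\epsilon]^{n-1}=nk\,\epsilon^{2(n-1)}$, which is precisely the balancing value.

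Finally I would invoke the deformation theory of cscK metrics: since $X$ carries no nontrivial holomorphic vector field, the theorem of LeBrun--Simanca produces, for all small $\epsilon$, a cscK metric $\omega'_\epsilon$ in the class $[\omega'_\epsilon]$ (of small positive scalar curvature). Applying Theorem~\ref{Main theorem} to $(X,\omega'_\epsilon)$ yields a cscK metric of Poincar\'e type on $X\setminus\{p_1,\dots,p_k\}$ in the class $\pi^*[\omega'_\epsilon]-\epsilon^2\sum_i[E_i]$; by the choice of $[\omega'_\epsilon]$ the bracket above vanishes, so the constant scalar curvature is zero and the metric is scalar-flat K\"ahler of Poincar\'e type. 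I expect the main obstacle to be the self-consistency of this scheme: the gluing threshold $\epsilon_0$ in Theorem~\ref{Main theorem} a priori depends on the base metric $\omega'_\epsilon$, which in turn depends on $\epsilon$, so I must verify that $\epsilon_0$ stays bounded below as $\omega'_\epsilon\to\omega_X$ (which should follow from the continuous dependence of the construction's constants on the base geometry) and then take $\epsilon$ small enough that $\epsilon<\epsilon_0(\omega'_\epsilon)$. The remaining technical point, established in Section~\ref{subsection 9.1}, is the rigorous justification of the log--Chern-class identity for the total scalar curvature of the constructed Poincar\'e type metric, on which the entire balancing argument rests.
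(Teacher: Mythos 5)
Your proposal is correct, and it is built from the same three ingredients as the paper's proof in Section~\ref{subsection 9.1}: the cohomological formula (Equation~\ref{expansion of scalar curvature of solution}, via Auvray's Theorem~\ref{theorem Auvray 3.1}) identifying the scalar curvature constant of the glued Poincar\'e type cscK metric with a log-Chern-class intersection number, the LeBrun--Simanca deformation theory available because $X$ has no holomorphic vector fields, and a lower bound on the gluing threshold $\epsilon_0$ that is uniform over a compact family of base metrics. The difference is the order of operations. The paper (following Arezzo--Pacard) first builds a one-parameter family of cscK metrics $\omega(y)$ on $X$ whose scalar curvature changes sign at $y=0$, applies Theorem~\ref{Main theorem} to every $\omega(y)$ at a fixed small $\epsilon$, and then locates the scalar-flat member a posteriori by the intermediate value theorem in $y$; you instead balance a priori, using Hard Lefschetz and Poincar\'e duality to choose, for each small $\epsilon$, a K\"ahler class $[\omega'_\epsilon]$ with $c_1(X)\cdot[\omega'_\epsilon]^{n-1}=nk\,\epsilon^{2n-2}$ exactly, so that a single application of Theorem~\ref{Main theorem} to a LeBrun--Simanca cscK representative of $[\omega'_\epsilon]$ produces a metric whose constant is forced to vanish by the formula. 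Your version buys two things: the role of the hypothesis $c_1(X)\ne 0$ is made completely explicit (in the paper it is hidden inside the citation producing the sign-changing family), and you need only one gluing per $\epsilon$ rather than a family, so you never need continuity of the glued solution in the family parameter (which the paper asserts by analogy with Arezzo--Pacard). The cost is symmetric: you must solve the balancing equation exactly (which your intermediate value argument on $t\mapsto c_1(X)\cdot([\omega_X]+t\beta)^{n-1}$ does, since $f(0)=0$ and $f'(0)>0$ by your choice of $\beta$), and, exactly as in the paper, you must justify that $\epsilon_0$ stays bounded below as the base metric varies in the family $\omega'_\epsilon\to\omega_X$ --- the point you correctly flag; your proof stands or falls with the paper's own proof on that uniformity claim, so this is not an additional gap. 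Your intersection computation (including $[E_i]^n=(-1)^{n-1}$, the vanishing of cross terms, and the resulting value $c_1(X)\cdot[\omega']^{n-1}-nk\,\epsilon^{2n-2}$) agrees with the paper's $k=1$ computation, so the balancing target is the right one.
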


We compare the main theorem to the blow-up constructions of Arezzo-Pacard \cite{arezzo2006blowing} and Sz\'ekelyhidi \cite{Sze14}. To obtain the cscK metric of Poincar\'e type in Theorem~\ref{Main theorem}, we carry out a gluing construction. More specifically, instead of gluing the Burns--Simanca scalar-flat K\"ahler metric on the total space of the line bundle $\mathcal{O}(-1)$ over $\CP{n-1}$, here we glue in a complete scalar-flat K\"ahler(SFK) metric on $\mathcal{O}(-1)\backslash E\cong Bl_0\C^n\backslash E\cong\C^n-\{0\}$, which arises from the momentum construction of Hwang-Singer \cite{HS02}. We refer to this metric as \textbf{\textit{the Hwang-Singer metric}}, denoted by $\omega_{HS}$. We will show that $\omega_{HS}$ has Poincar\'e type singularity along the divisor $E$. 

\medskip

With additional assumptions, we extend the result to those $X$ that admit non-trivial holomorphic vector fields. 

\begin{thm}\label{thm application 1}
    Given a compact K\"ahler manifold $X$ and a maximal compact subgroup $K$ of $G=Aut(X)$, let $\mathfrak{k}$ be its Lie algebra. Assume $X$ admits a cscK metric $\omega_X$ invariant under $K$. Let $\mu: X\rightarrow\mathfrak k^*$ be a normalized moment map with respect to $\omega_X$. Choose points $p_1, \cdots, p_k$ and $a_1, \cdots, a_k>0$ such that the genericity condition $\mu(p_1), \cdots, \mu(p_k) \text{ span }\mathfrak{k}$ and the balancing condition $\displaystyle\sum_{i=1}^ka_i\mu(p_i)=0$ are satisfied. Assume also that the extremal vector field $\mathcal X_{\ep}$ determined by the K\"ahler class

    \begin{equation}\label{Kahler class with ai}
        \pi^*[\omega_X]-\ep^2\displaystyle\sum_{i=1}^ka_i[E_i]
    \end{equation}
    
    and the subgroup $K$ satisfies

     \begin{equation}\label{theorem condition}
        \mathcal X_{\ep}|_{E_i}=0, i=1, \cdots, k.
    \end{equation}  
    
    Then $Bl_{p_1, \cdots, p_k}X\backslash \bigcup_{i=1}^k E_i$ admits a cscK metric with Poincar\'e type singularity along $E_i$ within the K\"ahler class (\ref{Kahler class with ai}).
\end{thm}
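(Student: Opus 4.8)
The plan is to run the same $K$-equivariant gluing scheme used for Theorem~\ref{Main theorem}, but now to keep track of the finite-dimensional obstruction created by the holomorphic vector fields in $\mathfrak{k}$, and to arrange the two hypotheses so that this obstruction is killed. First I would carry out the whole construction inside the space of $K$-invariant tensors: since $\omega_X$ is $K$-invariant and the Hwang--Singer model $\omega_{HS}$ on $\C^n-\{0\}$ is $U(n)$-invariant, the approximate metric $\omega_\ep$ obtained by excising small balls around the $p_i$ and gluing in copies of $\omega_{HS}$ rescaled by the factors $\ep^2 a_i$ can be taken $K$-invariant and to represent the class (\ref{Kahler class with ai}). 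This is exactly the approximate PTK metric of the main theorem, with the weights $a_i$ recording the coefficients of the $[E_i]$.

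Next I would analyze the linearization. In the presence of $\mathfrak{k}$ the Lichnerowicz operator of $\omega_\ep$ is no longer uniformly invertible: its approximate cokernel is spanned by the holomorphy potentials of the fields in $\mathfrak{k}_0\coloneqq\{\xi\in\mathfrak{k}:\xi(p_i)=0\ \forall i\}$, the subalgebra of $\mathfrak{k}$ lifting to $Bl_{p_1,\cdots,p_k}X$. Following Arezzo--Pacard \cite{arezzo2006blowing} and Szekelyhidi \cite{Sze14}, the remedy is to solve the \emph{extremal} equation rather than the cscK equation: I would set up the same weighted Banach spaces on the Poincar\'e-type ends as in the main theorem, restrict to $K$-invariant functions, and project the scalar-curvature map onto the orthogonal complement of these potentials. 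On this complement the projected operator is uniformly invertible in $\ep$, so the implicit function theorem produces, for all small $\ep$, a genuine $K$-invariant extremal PTK metric in the class (\ref{Kahler class with ai}) whose extremal field is the class-determined field $\mathcal{X}_\ep$.

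The heart of the argument is the finite-dimensional reduction, where the two hypotheses enter. The residual obstruction, obtained by pairing the error of $\omega_\ep$ against the potentials of $\mathfrak{k}_0$, has leading term proportional to $\sum_{i=1}^k a_i\,\mu(p_i)\in\mathfrak{k}^*$, so the balancing condition $\sum_i a_i\mu(p_i)=0$ makes this leading obstruction vanish. To absorb the higher-order terms I would use the positions of the points as gluing parameters: the genericity condition, that $\mu(p_1),\cdots,\mu(p_k)$ span $\mathfrak{k}^*$, makes the differential of the obstruction map surjective, so a fixed-point argument in these parameters closes the reduced problem. This is the Poincar\'e-type analogue of the Arezzo--Pacard--Szekelyhidi balancing/nondegeneracy mechanism.

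Finally I would upgrade the extremal metric to a cscK one. On a Poincar\'e-type manifold a holomorphic field can occur as an extremal field only if it has bounded holomorphy potential on the cusps, which by the asymptotic analysis near the divisor (cf. \cite{Auv13-1}) forces it to vanish along each $E_i$; hypothesis (\ref{theorem condition}), $\mathcal{X}_\ep|_{E_i}=0$, supplies exactly this compatibility, and together with the vanishing of the relative Futaki invariant secured by balancing it forces the extremal equation to reduce to the cscK equation, so the resulting metric is cscK. I expect two steps to demand the most care: making this extremal-to-cscK reduction rigorous on the non-compact manifold, and the uniform-in-$\ep$ linear analysis of Step 2, where the projected Lichnerowicz operator must be inverted on weighted spaces over the Poincar\'e-type ends while the $\mathfrak{k}_0$-cokernel is isolated, using the precise cusp asymptotics of $\omega_{HS}$.
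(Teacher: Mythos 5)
Your overall philosophy (gluing plus a finite-dimensional obstruction analysis in the spirit of Arezzo--Pacard and Szekelyhidi) is the right one, but the proposal has genuine gaps, two of which are fatal. First, the $K$-equivariant framework cannot be set up: nothing in the hypotheses makes $p_1,\cdots,p_k$ fixed points of the $K$-action, so the glued metric is not $K$-invariant; in fact, a $K$-fixed point has coadjoint-invariant moment image, so for nonabelian $\mathfrak{k}$ the genericity condition (the $\mu(p_i)$ span $\mathfrak{k}$) is outright incompatible with all $p_i$ being $K$-fixed. Hence the restriction to $K$-invariant functions, and the uniform invertibility you derive from it, have no foundation. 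Relatedly, the obstruction space is misidentified: the approximate cokernel consists of holomorphy potentials of \emph{all} of $\mathfrak{k}$, not only of $\mathfrak{k}_0$ (this is exactly why genericity concerns all of $\mathfrak{k}$); note also the internal inconsistency that pairing against potentials of $\mathfrak{k}_0$ produces an element of $\mathfrak{k}_0^*$, not of $\mathfrak{k}^*$. The paper handles this obstruction differently, by Arezzo--Pacard's deficiency spaces on the punctured base: genericity makes $(\phi,\beta)\mapsto L_{\omega_X}(\phi+\beta)$ surjective (Equation~\ref{change in linear operator by dificiency space}), the base metric comes from \cite{AP09} Proposition 5.1, and the Hwang--Singer piece is perturbed (with a Green's function term and scale parameter $a$) to solve a modified cscK equation as in Theorem~\ref{AP09 theorem modified cscK}. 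Moving the points, as you propose, would in any case prove a different statement, since Theorem~\ref{thm application 1} fixes both the points and the class (\ref{Kahler class with ai}).

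Second, and most seriously, your final extremal-to-cscK step is where hypothesis (\ref{theorem condition}) must do its work, and your argument for it does not close. An extremal metric in the class is cscK only if its extremal field $\mathcal{X}_\ep$ vanishes identically; hypothesis (\ref{theorem condition}) only gives $\mathcal{X}_\ep|_{E_i}=0$, and the balancing condition does not "secure" the vanishing of the Futaki character---it controls only the leading term of the obstruction as $\ep\to 0$---so $\mathcal{X}_\ep=0$ is never established. Moreover, your scheme inherits from the main theorem's linear theory (Theorem~\ref{theorem bounded inverse for cusp}) a residual term you never address: the solution on the Hwang--Singer piece satisfies a \emph{modified} equation whose scalar curvature contains $\mathcal{D}^*_{\omega_{HS}}\mathcal{D}_{\omega_{HS}}(\psi_{f_\ep})$, coming from the cusp cokernel $\widetilde{\ker\mathcal{D}^*_E\mathcal{D}_E}$. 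The paper's use of (\ref{theorem condition}) is precisely to kill this term: restricting $\nabla^{1,0}s(\omega_{sol})$ to $E$ gives the extremal field of $\omega_{FS}$, which is zero; the holomorphy-potential ($Y$-)parts coming from $\mathfrak{k}$ restrict to zero on each $E_i$ exactly because of (\ref{theorem condition}); and the $\mathcal{D}^*\mathcal{D}(\psi_{f_\ep})$-part restricts nontrivially unless $f_\ep=0$, so it is forced to vanish and the scalar curvature is constant outright---no extremal-to-cscK conversion is ever needed. Without this restriction-to-$E$ mechanism, your construction ends with either a non-constant residual on the cusp or an unproved claim that $\mathcal{X}_\ep=0$.
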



The Poincar\'e type extremal vector field is defined using the Futaki character introduced in \cite{Auv13-2}, see Section~\ref{subsection 9.3}. It depends only on the K\"ahler class and the maximal compact subgroup $K$. Note that if a cscK Poincar\'e type metric exists in the given class, then $\mathcal X_\epsilon=0$. In Theorem~\ref{thm application 1}, this global vanishing is not assumed in advance. The hypothesis used in the gluing argument is the weaker divisor condition $\mathcal X_\epsilon|_{E_i}=0.$

The above results are all concerned with cscK metrics. In fact, the arguments can be extended to Poincar\'e type extremal K\"ahler metrics as well. 

\begin{thm}\label{thm application 2}
    Given a compact K\"ahler manifold $X$ endowed with an extremal K\"ahler(extK) metric $\omega_X$. Assume $\omega_X$ is invariant under the action of the maximal compact subgroup $K$ of $G=Aut(X)$ containing its extremal vector field. Let $T\subset K$ be a compact torus and $H$ be the centralizer of $T$ in $K$. Use $\mathfrak t, \mathfrak k, \mathfrak h$ to denote the corresponding lie algebras and $\mu$ to denote a normalized moment map. Choose points $p_1, \cdots, p_k$ on $X$ and $a_1, \cdots, a_k>0$ such that the extremal vector field vanishes at each point, and further, the balancing condition
    $\displaystyle\sum_{i=1}^k a_i^{n-1}\mu(p_i)\in\mathfrak t$ and the genericity condition $\mathfrak t+\langle \mu(p_1), \cdots, \mu(p_k)\rangle=\mathfrak h$ are satisfied. Assume also the extremal vector field $X_{\ep}$ satisfies
    
    $$\mathcal X_{\ep}|_{E_i}=0, i=1, \cdots, k.$$
    
    Then $Bl_{p_1, \cdots, p_k}X\backslash \bigcup_{i=1}^k E_i$ admits an extremal K\"ahler metric with Poincar\'e type singularity along $E_i$ in the K\"ahler class (\ref{Kahler class with ai}).
\end{thm}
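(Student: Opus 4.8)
The plan is to run the gluing scheme of Theorem~\ref{thm application 1} with two modifications: we allow the prescribed scalar curvature to be a Killing potential rather than a constant, and we carry out the entire analysis $H$-equivariantly, where $H$ is the centralizer of the fixed torus $T$. Recall that a $K$-invariant metric $\omega$ is extremal precisely when the $(1,0)$-gradient $\nabla^{1,0}_\omega S(\omega)$ is a holomorphic vector field, equivalently when $S(\omega)$ equals an affine function of the moment map of $\mathcal X_\ep$. Thus the natural unknown is a pair $(\varphi,\mathcal X)$, where $\varphi$ is an $H$-invariant Poincar\'e type potential and $\mathcal X\in\mathfrak h$ is a candidate extremal field, subject to $S(\omega_X+\ddb\varphi)=\ell_{\mathcal X}$ with $\ell_{\mathcal X}$ the associated Killing potential. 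Restricting to $H$-invariant functions is what makes this tractable: the cokernel of the relevant linearized operator on that space is spanned exactly by the potentials $\{\ell_{\mathcal X}:\mathcal X\in\mathfrak h\}$, which is the finite-dimensional space we are free to hit by varying $\mathcal X$.

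First I would construct the approximate extremal metric exactly as in the cscK argument: rescale $\omega_{HS}$ by $\ep$ near each $p_i$ and glue it to $\omega_X$ with the same cut-off functions, producing a family $\omega_\ep$ of $H$-invariant PTK metrics on $Bl_{p_1,\dots,p_k}X\setminus\bigcup E_i$ in the class (\ref{Kahler class with ai}). Since $\omega_{HS}$ is scalar-flat, it is a fortiori extremal with vanishing extremal field, so the error $S(\omega_\ep)-\ell_{\mathcal X_\ep}$ is concentrated in the gluing annuli and decays in $\ep$ at the rate already established for the cscK case. The hypothesis $\mathcal X_\ep|_{E_i}=0$ guarantees that the extremal field of the glued class restricts trivially to the cuspidal model, so the Killing-potential term contributes no extra error across the neck.

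Next I would set up the weighted analysis on the cusp geometry. The linearization of the extremal operator $\varphi\mapsto S(\omega_\ep+\ddb\varphi)-\ell_{\mathcal X}$ is, to leading order, the Poincar\'e type Lichnerowicz operator $\mathbb L_{\omega_\ep}$ together with the linear map $\mathcal X\mapsto\ell_{\mathcal X}$. On the weighted H\"older spaces adapted to the cusps used in Theorem~\ref{Main theorem}, $\mathbb L_{\omega_\ep}$ is Fredholm, and its approximate kernel and cokernel are carried by the bounded holomorphic vector fields, matched to $\mathfrak h$ through the moment map $\mu$. I would show that adjoining $\{\ell_{\mathcal X}:\mathcal X\in\mathfrak h\}$ to the image makes the combined operator uniformly surjective with a right inverse bounded independently of $\ep$. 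This is precisely where the genericity condition $\mathfrak t+\langle\mu(p_1),\dots,\mu(p_k)\rangle=\mathfrak h$ enters: it makes the obstruction assignment $\mathcal X\mapsto(\langle\mathcal X,\mu(p_i)\rangle)_i$ surjective onto the cokernel, so that every low-eigenvalue obstruction can be absorbed by a choice of $\mathcal X$.

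The main obstacle, and the step that forces the balancing condition $\sum_i a_i^{n-1}\mu(p_i)\in\mathfrak t$, is the uniform-in-$\ep$ solvability of the \emph{nonlinear} fixed-point problem, in which one solves simultaneously for $\varphi$ and for the correct field $\mathcal X=\mathcal X_\ep$. The leading-order constraint that pins down $\mathcal X_\ep$ is exactly a weighted balancing identity for the moment-map values at the blown-up points, with the weight $a_i^{n-1}$ reflecting the scaling of the exceptional-divisor contribution to the relative Futaki character of Auvray used to define the extremal field (in contrast to the weight $a_i$ of the cscK case). Once genericity and balancing hold, a quantitative implicit function theorem applied to the $H$-invariant extremal operator on the weighted spaces yields, for all sufficiently small $\ep$, a genuine $H$-invariant extremal PTK metric in the class (\ref{Kahler class with ai}), which completes the proof.
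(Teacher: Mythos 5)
There is a genuine gap, and it sits exactly at the point where this theorem differs from a standard Arezzo--Pacard--Singer argument. You claim that, after restricting to invariant functions, the cokernel of the linearized operator is spanned by the Killing potentials $\{\ell_{\mathcal X}:\mathcal X\in\mathfrak h\}$, and that the genericity condition $\mathfrak t+\langle\mu(p_1),\cdots,\mu(p_k)\rangle=\mathfrak h$ then makes the combined operator surjective. That accounts only for the obstructions coming from the compact base $X$. The Lichnerowicz operator of the cuspidal Hwang--Singer metric on $Bl_0\C^n\backslash E$ has, on the weighted spaces adapted to the cusp, an additional cokernel localized at the exceptional divisor: it is spanned by the pullbacks $\psi_f=\chi\pi^*f$ of holomorphy potentials $f\in\ker\mathcal D^*_E\mathcal D_E$ on $E\cong\CP{n-1}$ (an $(n^2-1)$-dimensional space per blown-up point), and one must also adjoin the functions $\lambda t$ to achieve surjectivity (Theorems~\ref{linear theorem for the cusp} and~\ref{theorem bounded inverse for cusp}). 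These obstructions cannot be absorbed by varying $\mathcal X\in\mathfrak h$, since they are carried by the geometry of $E_i$ and not by holomorphic vector fields of $X$; no choice of genericity condition on the $\mu(p_i)$ reaches them. This is why the paper does not solve the extremal equation directly: it solves a \emph{modified} extK equation (Theorem~\ref{theorem modified extK}) in which $(f_{\ep},\lambda_{\ep})$ enter as extra unknowns and the prescribed curvature contains the term $a^{-4}\mathcal D^*_{\omega_{HS}}\mathcal D_{\omega_{HS}}(\psi_{f_{\ep}})$. Your fixed-point scheme, as written, has no right inverse to iterate with, uniformly in $\ep$ or otherwise.

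Relatedly, you misplace the role of the hypothesis $\mathcal X_{\ep}|_{E_i}=0$: you use it only to say the Killing potential creates no extra error across the neck, whereas in the paper it is the key to converting a solution of the modified equation into a genuine extremal metric. After gluing, one restricts $\nabla^{1,0}s(\omega_{sol})$ to $E$: by the expansion of the metric near the cusp (Theorem~\ref{theorem Auvray 3.1}) this restriction is the extremal field of $\omega_{FS}$ on $\CP{n-1}$, which vanishes; the $\ep^4Y$ part restricts to zero precisely by the hypothesis (\ref{theorem condition}); hence the remaining term $\mathcal D^*_{\omega_{HS}}\mathcal D_{\omega_{HS}}(\psi_{f_{\ep}})$, which is nonzero on $E$ unless $f_{\ep}=0$, is forced to vanish. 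Without this a posteriori step your argument produces at best a solution of a perturbed equation, not an extremal metric. A secondary inaccuracy: the balancing condition $\sum_i a_i^{n-1}\mu(p_i)\in\mathfrak t$ is inherited from the base-piece analysis of Arezzo--Pacard--Singer on $X_{r_{\ep}}$, not from the scaling of Auvray's Poincar\'e type Futaki character as you speculate; the latter is used only to define $\mathcal X_{\ep}$ so that condition (\ref{theorem condition}) makes sense.
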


The conditions on the choice of points are the same as those required in the blowing up situation in \cite{APS11}. The vector field condition, again seen from \cite{Auv14} Theorem 4, is a necessary condition for the existence of extremal K\"ahler metric of Poincar\'e type in the given K\"ahler class (\ref{Kahler class with ai}).

\medskip


In Section~\ref{section 2}, we also show that there exists \textbf{\textit{a family of conical metrics}} with cone angle $\beta\in(0, 1)$ along the zero section connecting the classical Burns--Simanca metric with the cuspidal metric $\omega_{HS}$. If we use this family to do the gluing construction, then we expect to obtain a family of conical metrics on $Bl_{p_1, \cdots, p_k}X \setminus \bigcup_{i=1}^k E_i$ connecting the smooth metric constructed in \cite{arezzo2006blowing} by Arezzo-Pacard  and our Poincar\'e type metric in the main theorem. 

\paragraph{Strategy of the proof and Outline of the article} 

\begin{itemize}
    \item In Section \ref{section 2}, we discuss the derivation of $\omega_{HS}$ under the momentum construction and give an expression of $\omega_{HS}$ under the LeBrun ansatz. Besides, we introduce the conical family of scalar-flat K\"ahler metrics connecting the Burns--Simanca metric and the cuspidal metric $\omega_{HS}$. Further analysis on the expansion of $\omega_{HS}$ near the asymptotically Euclidean (AE) end and near the divisor are presented in Section \ref{section 3}. 

    \item We restrict to the situation of blowing up one point $p$. In Section \ref{section 4}, we construct a candidate metric on $Bl_pX\backslash E$ and set up the cscK equation we aim at solving. In Section \ref{section 5} we introduce the weighted H\"older norm space to study the behavior of functions near the Poincar\'e type cusp and define doubly weighted spaces for functions on $Bl_0\C^n\backslash E$. 
    
    \item The main technical difficulties are addressed in Section~\ref{section 7}. We construct a surjective map between H\"older spaces by explicitly constructing its inverse. The strategy is to first decompose the function into parts supported on the AE end and the cusp end, where local inverses are constructed separately. These are then glued together to obtain a global inverse. On the cusp end, we provide a new approach that bypasses the gap in the Fredholm-index argument of \cite{Sek18}, as pointed out in \cite{xuzheng24}. Using the geometry of the Hwang-Singer metric near the exceptional divisor and the Fourier decomposition along the $S^1$-action, we separate the $S^1$-invariant component from the nonzero modes and treat them separately. The key estimate is provided by Lemma~\ref{lemma essential inequality}.
    
    \item In Section \ref{section 8}, we need to solve a modified cscK equation due to the complexity introduced by the surjective map constructed in Section \ref{section 7}. Then we use the assumption that $X$ has no non-trivial holomorphic vector field to show this glued metric on $Bl_pX\backslash E$ is indeed cscK.

    \item In Section \ref{section 9}, we give an explicit formula for the constant scalar curvature on $Bl_pX\backslash E$ in Equation~\eqref{expansion of scalar curvature of solution} and give a proof of Corollary~\ref{cor SFK}, see \ref{subsection 9.1}. We prove Theorem~\ref{thm application 1} in \ref{subsection 9.2} and Theorem~\ref{thm application 2} in \ref{subsection 9.3}. 
\end{itemize}

\paragraph{Notations of weight near Poincar\'e type singularity}

The Definition~\ref{def weighted norm} of weight for H\"older spaces near Poincar\'e type cusp may differ by a sign in different contexts. To avoid confusion, we mention here that the sign we choose aligns with the one chosen by Auvray in \cite{Auv13-1, Auv13-2, Auv14, Auv17} and by Lockhart-McOwen in \cite{LM85}. It differs by a negative sign from the one chosen by Sektnan in \cite{Sek16, Sek18}, by Pacard in \cite{Pac08} and by Xu-Zheng in \cite{xuzheng24}.

\paragraph{Acknowledgements}

The author is sincerely grateful to Song Sun for many insightful discussions and constant encouragement. The author thanks Lars Martin Sektnan, Xuwen Zhu, Junsheng Zhang, Yifan Chen, and Daigo Ito for helpful discussions, and Yulun Xu for valuable conversations and for pointing out an issue in an earlier version. The author also thanks the anonymous referee for many constructive suggestions that improved the paper. The author is grateful to IASM at Zhejiang University for their hospitality during the visit when part of this research was carried out, and to the NSF for their generous support through grant DMS-2304692.

\section{Momentum construction of scalar-flat K\"ahler metric}\label{section 2}

In this section, we give the precise definition of Poincar\'e type singularity and discuss how the momentum construction in \cite{HS02} gives rise to the cuspidal scalar-flat K\"ahler metric on the punctured line bundle $\mathcal{O}(-1)^*$ appearing in the introduction. 

\begin{definition} (Poincar\'e type K\"ahler metric, \cite{Auv13-1, Auv17})\label{PT metric}
    Given $(X, \omega_X)$ a compact complex manifold and $D$ a smooth divisor in $X$ with $\sigma\in H^0(X, \mathcal{O}(D))$ being a holomorphic defining section. Fix $\lambda$ a sufficiently large constant such that
    
    $$\omega_h\coloneqq \omega_X-\displaystyle\sqrt{-1}\partial\bar{\partial}\log(\lambda-\log(|\sigma|^2))$$

    is a positive $(1, 1)$-form on $X\backslash D$. We say a closed, smooth $(1,1)$-form  
    
    $$\omega_{PT}\coloneqq \omega_X+\sqrt{-1}\partial\bar{\partial}\varphi$$
    
    on $X\backslash D$ is a \textbf{Poincar\'e type K\"ahler metric} if 
    \begin{itemize}
        \item $\omega_{PT}$ is \textit{quasi-isometric} to $\omega_{h}$, which means there exists some $C>0$, such that $\frac{1}{C}\omega_h\le\omega_{PT}\le C\omega_{h}$ and $\forall i\ge 1$, $\sup_{X\backslash D}|\nabla^i_{\omega_h}\omega_{PT}|<\infty$;
        \item for $h\coloneqq \log(\lambda-\log(|\sigma|^2))$, $\varphi$ is a smooth function on $X\backslash D$ with $\varphi=O(h)$, and $\forall i\ge 1$, $\sup_{X\backslash D}|\nabla^i_{\omega_{h}}\varphi|<\infty$.
    \end{itemize}
\end{definition}

Consider a point on $D$ and take a neighborhood $U$ under coordinates $(z_1, \cdots, z_n)$ such that $U\cap D=\{z_1=0\}$, then near the divisor, the reference model metric $\omega_h$ is asymptotic to 

$$\displaystyle\frac{\sqrt{-1}dz_1\wedge d\bar{z}_1}{|z_1|^2\log^2|z_1|^2}+\displaystyle\sum_{i=2}^n\sqrt{-1}dz_i\wedge d\bar{z}_i.$$

Near every point on the divisor, the Poincar\'e type metric is then modeled on the product of the Poincar\'e metric on the punctured disc with a smooth metric on the divisor. In particular, metrics asymptotic to the product metric above near $D$ are called asymptotically hyperbolic metrics. This is a special class of Poincar\'e type metrics. More generally, as discussed in \cite{Auv14, Auv17, Auv13-1}, Poincar\'e type metrics are also defined in the case where $D$ is a simple normal crossing divisor.

We now proceed with the momentum construction. Let $p: (L, h)\rightarrow (M^m,\omega_M)$ be a Hermitian holomorphic line bundle with curvature form $\gamma=-\displaystyle\sqrt{-1}\partial\bar{\partial}\log h$, where $M^m$ is a K\"ahler manifold of dimension $m$. For $r=\log|\cdot|_h$ the log of the fiberwise norm function defined by $h$, consider the Calabi ansatz

\begin{equation}\label{calabi ansatz}
    \omega=p^*\omega_M+2\displaystyle\sqrt{-1}\partial\bar{\partial}f(r).
\end{equation}

Then consider the natural $S^1$-action on $L$, and we use $\xi$ to denote its generator. Let $\tau$ be the corresponding moment map determined by $\iota_{\xi}\omega=-d\tau$, then $|\xi|_h^2$ is constant on the level sets of $\tau$, which implies the existence of a function $w: I\rightarrow (0, \infty)$ satisfying $w=|\xi|_h^{-2}$. The inverse of $w$, denoted by $\varphi$, is called the \textit{momentum profile} of $\omega_M$. Then we rewrite the metric \ref{calabi ansatz} as

$$\omega_{\varphi}=p^*\omega_M+2\ddb\varphi(\tau).$$

We choose the interval $I$ such that the closed $(1,1)$-form 

$$\omega_M(\tau)\coloneqq \omega_M-\tau\gamma$$

is positive for all $\tau\in I$. Let $g_M(\tau)$ and $Scal(g_M(\tau))$ denote the associated metric and the scalar curvature, respectively. From \cite{HS02}, we know the existence of complete cscK metrics on (punctured) line/disc bundles:

\begin{thm}(\cite{HS02}, Theorem C)\label{Thm C}
    Let $p: (L, h)\rightarrow (M^m,\omega_M)$ be the holomorphic line bundle and $I$ be the interval. Assume $\gamma\le 0$, $\gamma\ne 0$ and $g_M$ is complete. Then there exists a real number $c_0\in\mathbb{R}$ such that for any $c<c_0$ and finitely many $c>c_0$, the punctured disc bundle, denoted by $\Delta^*(L)$, carries a complete metric $g_c$ with constant scalar curvature $c$. If $c=c_0$, the metric extends to a complete metric on the punctured line bundle $L^*$.
\end{thm}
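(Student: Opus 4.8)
The plan is to use the $S^1$-symmetry of the Calabi ansatz to reduce the constant-scalar-curvature equation---a fully nonlinear equation on the $(m+1)$-dimensional total space---to a single ordinary differential equation for the momentum profile $\varphi$ on the interval $I$, and then to solve and analyze that ODE explicitly. The starting point is the Hwang--Singer formula for the scalar curvature of $\omega_\varphi = p^*\omega_M + 2\ddb\varphi(\tau)$. Writing $P(\tau) := \omega_M(\tau)^m/\omega_M^m$ for the ratio of volume forms---which is strictly positive on $I$ by the choice of $I$ and monotone because $\gamma\le 0$---their computation gives an expression that is \emph{affine-linear} in $\varphi$ and its derivatives, of the form
\[
\mathrm{Scal}(g_\varphi) \;=\; \mathrm{Scal}(g_M(\tau)) \;-\; \frac{1}{2P(\tau)}\bigl(P(\tau)\,\varphi(\tau)\bigr)''.
\]
In the situation at hand the horizontal term $\mathrm{Scal}(g_M(\tau))$ is a function of $\tau$ alone (this is automatic in the rigid case $M=\CP{n-1}$, $L=\mathcal{O}(-1)$ relevant to $\omega_{HS}$, where $\gamma=-\omega_{FS}$ and $\omega_M(\tau)=(1+\tau)\omega_{FS}$), so imposing $\mathrm{Scal}(g_\varphi)\equiv c$ becomes the linear second-order ODE
\[
\bigl(P\varphi\bigr)''(\tau) \;=\; 2\,P(\tau)\,\bigl(\mathrm{Scal}(g_M(\tau)) - c\bigr)
\]
for the auxiliary function $\phi := P\varphi$. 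This is the crucial simplification: the whole problem is now one-dimensional and even linear.

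Next I would integrate the ODE twice; the general solution carries two constants of integration, which I fix from the boundary behavior forced by the geometry at the two ends of $I$. Completeness together with a cuspidal (Poincar\'e-type) end along the zero section forces $\varphi$ to have a \emph{double} zero at the left endpoint $\tau_0$: as $\tau\to\tau_0$ the $S^1$-orbits collapse while the radial distance $\int d\tau/\sqrt{\varphi}$ diverges, which is precisely the cusp model $\ddb\log(-\log|z|^2)$ and pins both constants. At the opposite end $\tau\to\tau_1$ I would prescribe the behavior that either closes the fibre into a disc (finite $\tau_1$) or opens it to all of $\C$ with $\varphi(\tau)\sim\tau$ as $\tau\to\infty$, the latter producing an asymptotically Euclidean end. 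For the model case one checks that at $c=0$ the unique profile with a double zero at $\tau_0$ is positive on $(\tau_0,\infty)$ and grows linearly, recovering exactly $\omega_{HS}$ on $L^*\cong\C^n-\{0\}$ and identifying $c_0=0$ there.

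The step I expect to be the main obstacle is the \emph{positivity and boundary analysis} that produces the threshold $c_0$ and the dichotomy in the statement. After the constants are fixed by the cusp condition at $\tau_0$, one must control the sign of the explicit solution $\varphi$ across the whole interval as $c$ varies. I would argue that for $c<c_0$ the competition in $(P\varphi)''=2P(\mathrm{Scal}(g_M(\tau))-c)$ keeps $\varphi$ strictly positive on the interior while attaining the prescribed boundary values, so the punctured disc bundle carries a complete metric of scalar curvature $c$; that as $c$ grows the negative contribution $-2cP$ eventually drives $\varphi$ to an interior zero, which for all but finitely many $c>c_0$ is a simple (hence incomplete or singular) zero; and that exactly at $c=c_0$ the fibre opens to the full line and the metric extends to a complete metric on $L^*$. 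The sign and monotonicity properties supplied by $\gamma\le 0$, $\gamma\ne 0$ and the completeness of $g_M$ are precisely what is needed to make these estimates go through, and verifying them uniformly in $c$ is the delicate part.
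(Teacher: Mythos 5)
Your proposal cannot be compared against a proof in the paper, because the paper does not prove this statement: it is quoted verbatim from Hwang--Singer \cite{HS02} (their Theorem C), and the paper only uses it, afterwards computing the explicit profile $\varphi(\tau)=\tau^2/(1+\frac{k}{2}\tau)$ in its special case and proving separately, in the lemma that follows, that the resulting scalar-flat metric extends from the punctured disc bundle to $L^*$. Your reduction to the ODE $(Q\varphi)''=2Q\,\bigl(\mathrm{Scal}(g_M(\tau))-c\bigr)$ with the double-zero initial conditions $\varphi(\tau_0)=\varphi'(\tau_0)=0$ is indeed the framework of Hwang--Singer's argument (and of the paper's recap via their Theorem A), so the setup is correct. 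But as a proof it has a genuine gap: the entire content of the theorem --- the existence of the threshold $c_0$, completeness of $g_c$ on $\Delta^*(L)$ for \emph{every} $c<c_0$, the meaning of ``finitely many $c>c_0$'', and the extension to $L^*$ exactly at $c=c_0$ --- is announced (``I would argue that\dots'', ``the delicate part'') rather than established. Concretely, after fixing the constants of integration you must show, uniformly in $c$: (i) positivity of $\varphi_c$ on the interior of $I$; (ii) completeness at the outer end, which requires divergence of $\int d\tau/\sqrt{\varphi_c}$ there, together with the convergence or divergence of $\int d\tau/\varphi_c$ to decide whether the metric lives on a disc bundle or on all of $L^*$; and (iii) a characterization of the exceptional supercritical values of $c$. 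None of (i)--(iii) is carried out, and (iii) in particular is not a soft consequence of the monotonicity of $\varphi_c$ in $c$.

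Moreover, the one place where you do pin down $c_0$ contradicts the paper. You claim that in the model case the $c=0$ profile lives on $L^*\cong\C^n\setminus\{0\}$, ``identifying $c_0=0$ there''. The paper states the opposite: ``From the proof of Theorem~\ref{Thm C}, we know for our case, $c_0>0$'', so the scalar-flat metric arises from the $c<c_0$ clause (hence a priori only on the punctured disc bundle), and its extension to $L^*$ is \emph{not} an instance of the $c=c_0$ clause; it is a separate verification, namely the paper's extension lemma: since $\varphi(\tau)=\tau^2/(1+\frac{k}{2}\tau)$ grows linearly, $r=\int d\tau/\varphi$ diverges, so the metric fills out all of $L^*$, while $\int d\tau/\sqrt{\varphi}$ diverges at both ends, giving completeness with a finite-area cusp at $\tau=0$. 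Conflating ``the metric extends to $L^*$'' with ``$c=c_0$'' makes your proposed trichotomy internally inconsistent for precisely the metric $\omega_{HS}$ that this paper needs, so this step, as written, would fail.
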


From the proof, we know the completeness of the metric near $\tau=0$ implies that $\varphi$ vanishes to order two, i.e., $\varphi(0)=\varphi'(0)=0$. Define $Q: I\times M^m\rightarrow\mathbb{R}$ to be

$$Q(\tau)\coloneqq\displaystyle\frac{\omega_M(\tau)^m}{\omega_M^m}.$$

In our case, take

$$M=\mathbb{C}\mathbb{P}^1, \quad L_0\coloneqq \mathcal{O}(-1), \quad L\coloneqq kL_0=\mathcal{O}(-k), \omega_M=2\omega_{FS},$$ 

then 

$$I=(0, \infty), \quad \gamma=-\frac{k}{2}\omega_{FS},  \quad Q(\tau)=1+\frac{k\tau}{2},  \quad Scal(g_M(\tau))=\displaystyle\frac{1}{1+\frac{k\tau}{2}}.$$

Here on $\CP1$, $\omega_{FS}$ satisfies Ric $\omega_{FS}=2\omega_{FS}$ and in homogeneous coordinate $Z$ can be written as $\ddb\log|Z|^2$. To obtain the explicit expression of momentum profile, first, we recall from \cite{HS02} the expression of $Scal(g_M(\tau))$:

\begin{thm}(\cite{HS02}, Theorem A)
     Starting from the above data the holomorphic line bundle $p: (L, h)\rightarrow (M^m,\omega_M)$ and interval $I$, let $g_{\varphi}$ be the metric associated to $\omega_{\varphi}$, then

    \begin{equation}\label{HS}
    Scal(g_{\varphi})=Scal(g_M(\tau))-\frac{1}{2Q}\displaystyle\frac{\partial^2}{\partial \tau^2}\left(Q\varphi\right)(\tau).  
    \end{equation}

\end{thm}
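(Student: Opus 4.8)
The plan is to compute the scalar curvature of the $S^1$-invariant metric $g_\varphi$ directly from the fibered Calabi ansatz, using the change of variable from the fiber log-radius $r$ to the moment map $\tau$ in order to reduce everything to a one-variable expression. First I would work locally: fix a point of $M$, choose holomorphic coordinates $(z_1,\dots,z_m)$ that are normal for $\omega_M$ there, together with a holomorphic fiber coordinate $\zeta$, so that $r=\log|\zeta|_h$ and the generator $\xi$ of the $S^1$-action are adapted to the horizontal/vertical splitting defined by the Chern connection of $(L,h)$. In these coordinates the horizontal block of $g_\varphi$ is precisely the metric $g_M(\tau)$ associated to $\omega_M(\tau)=\omega_M-\tau\gamma$ --- this is where the curvature $\gamma=-\ddb\log h$ enters, through $\ddb$ of the fiber-norm --- while the vertical block is controlled by the momentum profile $\varphi$ via the Legendre relation $\tau=f'(r)$ and $\tfrac{d\tau}{dr}=\varphi$ (up to a fixed constant), equivalently $\varphi=|\xi|_h^2$ with $\iota_\xi\omega=-d\tau$.

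The central computation is the metric determinant. Because the Chern connection makes the horizontal and vertical distributions orthogonal, $\det g_\varphi$ factorizes; the vertical component is proportional to $\varphi(\tau)/|\zeta|^2$ and the horizontal block has determinant $\det g_M(\tau)=Q(\tau)\det g_M$, where the factor $Q(\tau)=\omega_M(\tau)^m/\omega_M^m$ records the Jacobian of the horizontal deformation $\omega_M\rightsquigarrow\omega_M(\tau)$. Hence, up to a constant,
$$\det g_\varphi=\varphi(\tau)\,Q(\tau)\,\det g_M\cdot|\zeta|^{-2},$$
and the pluriharmonic factor $|\zeta|^{-2}$ contributes nothing to the Ricci form.

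With the determinant in hand, $\mathrm{Ric}(\omega_\varphi)=-\ddb\log\det g_\varphi=-\ddb\big(\log\varphi(\tau)+\log Q(\tau)+\log\det g_M\big)$, and the scalar curvature is its trace against $\omega_\varphi$, normalized as in \eqref{HS}. I would split this trace into horizontal and vertical parts. The horizontal trace, combined with the term $-\ddb\log\det g_M$, reassembles exactly into $Scal(g_M(\tau))$, the scalar curvature of the varying base metric $\omega_M(\tau)$; this gives the first term of \eqref{HS}. The remaining purely $\tau$-dependent contribution, traced against the vertical component of $\omega_\varphi$ and pushed through the chain rule $\tfrac{d}{dr}=\varphi\,\tfrac{d}{d\tau}$, collapses into the one-variable operator $-\tfrac{1}{2Q}\tfrac{\partial^2}{\partial\tau^2}(Q\varphi)$, which is the second term.

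The main obstacle I anticipate is the bookkeeping in this last split. The argument $\log(\varphi Q)$ depends on $\tau$, hence on $r$, hence nontrivially on the base through $\gamma$; one must verify that all the horizontal cross-terms produced by $\ddb$ conspire to yield precisely $Scal(g_M(\tau))$ rather than some perturbation of it, using the identity $\tfrac{d}{d\tau}\log Q=\mathrm{tr}_{\omega_M(\tau)}(-\gamma)$ that relates $Q'$ to $\gamma$ and $\omega_M(\tau)$. Dually, one must check that the radial pieces collapse to the clean total derivative $\tfrac{\partial^2}{\partial\tau^2}(Q\varphi)$ rather than a messier combination of $\varphi',\varphi'',Q',Q''$. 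This is exactly where choosing the momentum profile as the privileged variable pays off: the Legendre transform linearizes the radial structure, and the expansion $(Q\varphi)''=Q''\varphi+2Q'\varphi'+Q\varphi''$ matches the contributions of $\log\det g_\varphi$ term by term once the relation between $Q'$ and $\gamma$ is used.
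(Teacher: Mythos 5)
The paper contains no proof of this statement: it is imported verbatim as Theorem A of \cite{HS02} and is used only as input for deriving the ODE satisfied by the momentum profile, so there is no internal argument to compare yours against. Judged against the original source, your outline is correct and follows essentially the same route: the bordered-determinant identity $\det g_\varphi\propto\varphi(\tau)\,Q(\tau)\,\det g_M\,|\zeta|^{-2}$ holds exactly (the cross terms coming from $\partial\log h$ cancel in the Schur complement), the factor $|\zeta|^{-2}$ is killed by $\sqrt{-1}\partial\bar\partial\log$, and tracing $-\sqrt{-1}\partial\bar\partial\log(\varphi Q)$ against $\omega_\varphi=p^*\omega_M(\tau)+2\varphi\,\sqrt{-1}\partial r\wedge\bar\partial r$ yields $\frac{\varphi}{2}\,\partial_\tau\log(\varphi Q)\,\mathrm{tr}_{\omega_M(\tau)}\gamma-\frac{1}{2}\,\partial_\tau\bigl(\varphi\,\partial_\tau\log(\varphi Q)\bigr)$, which upon substituting $\mathrm{tr}_{\omega_M(\tau)}\gamma=-\partial_\tau\log Q$ telescopes precisely to $-\frac{1}{2Q}\,\partial_\tau^2(Q\varphi)$, exactly the collapse you anticipated; the mixed horizontal--vertical components drop out of the trace because the inverse metric is block diagonal in the adapted frame, which settles the bookkeeping worry you raised.
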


Then Equation~(\ref{HS}) becomes

$$\displaystyle\frac{\partial^2}{\partial \tau^2}\left(\left(1+\frac{k\tau}{2}\right)\varphi\right)=2.$$

Set $\varphi(0)=\varphi'(0)=0$, we obtain the solution 

$$\varphi(\tau)=\displaystyle\frac{\tau^2}{1+\frac{k}{2}\tau}.$$

It's clear from the expression that $\varphi(\tau)>0$ on $I=(0, \infty)$. From the proof of Theorem~\ref{Thm C}, we know for our case, $c_0>0$ and Theorem~\ref{Thm C} ensures the existence of a complete SFK metric $g_0$ on the punctured disc bundle. In fact, it can be further extended to the punctured line bundle in our special situation. 

\begin{lem}
    The metric $g_0$ on $\Delta^*(L)$ can be extended to a complete SFK metric on $L^*$, and near $\tau=0$, it's a finite-area cusp.
\end{lem}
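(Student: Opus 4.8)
The plan is to argue directly from the explicit momentum profile rather than re-invoking the abstract existence in Theorem~\ref{Thm C}. Since $\varphi(\tau)=\tau^2/(1+\tfrac{k}{2}\tau)$ is smooth and strictly positive on all of $(0,\infty)$, and $\omega_M(\tau)=(2+\tfrac{k}{2}\tau)\,\omega_{FS}$ is positive there as well, the Hwang--Singer data are admissible on the entire half-line, so $\omega_\varphi=p^*\omega_M+2\ddb\varphi(\tau)$ defines a smooth K\"ahler metric $g_0$ on the total space swept out by $\tau\in(0,\infty)$. It then remains to (i) identify this total space with the punctured line bundle $L^*$, (ii) check completeness, and (iii) check that the $\tau\to 0$ end is a finite-area cusp.

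For (i) I would use the change of variables $dr/d\tau=\varphi(\tau)^{-1}$ (up to a fixed positive constant) relating the fiberwise log-radius $r=\log|\cdot|_h$ to the momentum variable. Computing $\int \varphi^{-1}\,d\tau=\int(\tau^{-2}+\tfrac{k}{2}\tau^{-1})\,d\tau=-\tau^{-1}+\tfrac{k}{2}\log\tau$ shows that $r\to-\infty$ as $\tau\to 0^+$ and $r\to+\infty$ as $\tau\to\infty$. Thus $\tau\in(0,\infty)$ corresponds bijectively to $|\cdot|_h\in(0,\infty)$, i.e. to the full punctured fiber $\C^*$, so $g_0$ lives on $L^*$ and not merely on a disc sub-bundle. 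This is exactly where the explicit form of $\varphi$ is essential: its positivity on all of $(0,\infty)$ together with the logarithmic escape of $r$ at the outer end promotes the disc-bundle metric produced abstractly by Theorem~\ref{Thm C} to a metric on the punctured line bundle.

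For (ii) and (iii) I would reduce to one-dimensional integrals in the fiber coordinate, writing the fiber part of $g_0$ in action--angle form as $\varphi^{-1}d\tau^2+\varphi\,d\theta^2$. Any path escaping to an end must let $\tau\to 0$ or $\tau\to\infty$ (a path confined to a compact $\tau$-range stays in a compact region, since $M$ and the $S^1$-fiber are compact), so completeness follows from the radial distance $\int \varphi^{-1/2}\,d\tau$ diverging at both ends: near $0$, $\varphi\sim\tau^2$ gives $\int_0 \tau^{-1}\,d\tau=\infty$, and near $\infty$, $\varphi\sim \tfrac{2}{k}\tau$ gives $\int^\infty \tau^{-1/2}\,d\tau=\infty$. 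For the cusp, the circles $\{\tau=\text{const}\}$ have length $2\pi\sqrt{\varphi}\to 0$ as $\tau\to 0$ while lying at infinite distance, and the volume is finite: since $\tau$ is the moment map the fiber contributes $d\tau\wedge d\theta$, whence $\int_{\{\tau<\epsilon\}}\omega_\varphi^{m+1}\propto \int_0^\epsilon Q(\tau)\,d\tau<\infty$ because $Q(\tau)=1+\tfrac{k}{2}\tau$ is bounded near $0$. Shrinking circles lying at infinite distance yet bounding a region of finite volume is precisely a cusp.

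The step I expect to be the main obstacle is (i), the upgrade from disc bundle to line bundle. Theorem~\ref{Thm C} only guarantees a complete metric on the punctured \emph{disc} bundle in the regime $c<c_0$, and here $c=0<c_0$; so the real content is to rule out any degeneration as $\tau\to\infty$ and to confirm that the outer end is a genuine fiber-infinity (asymptotically Euclidean) end rather than a complete end at finite fiber-radius. Concretely this means verifying from the explicit $\varphi$ that $g_0$ stays smooth and positive for all $\tau$, that $r$ truly runs out to $+\infty$, and that the radial distance to $\tau=\infty$ diverges; the finer asymptotics of this AE end are then analyzed separately in Section~\ref{section 3}.
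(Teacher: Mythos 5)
Your proposal is correct and takes essentially the same approach as the paper's proof: both argue from the explicit profile $\varphi(\tau)=\tau^2/(1+\tfrac{k}{2}\tau)$, showing that the $r$-integral $\int \varphi^{-1}\,d\tau$ diverges as $\tau\to\infty$ (so the metric lives on $L^*$ rather than a disc bundle), that the radial distance $\int\varphi^{-1/2}\,d\tau$ diverges at both ends (completeness), and that the $\tau\to 0$ end is a finite-area cusp. The only difference is one of detail: you compute $r(\tau)$ exactly and justify the finite-area claim via $\int_0^{\epsilon}Q(\tau)\,d\tau<\infty$, whereas the paper relies on the asymptotics $\varphi(\tau)\le\tau^2$ and $\varphi$ growing linearly, and asserts the finite-area property without computation.
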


\begin{proof}
    Given the momentum profile $\varphi$, note the log of the fiberwise norm function has the expression  $r=\displaystyle\int_{\tau_0}^{\tau}\displaystyle\frac{dx}{\varphi(x)}$. We define $s(r)\coloneqq\displaystyle\int_{\tau_0}^{\tau}\displaystyle\frac{dx}{\displaystyle\sqrt{\varphi(x)}}.$ Then as $x\rightarrow\infty$, $\varphi(x)$ grows linearly, suggesting that the $r$ integral is divergent, hence the metric lives on the punctured line bundle $L^*$. The $s$ integral diverges, which means the distance to the level set $\{r=r_{\infty}\}$ is unbounded; hence the metric is complete. Here $r_{\infty}=\displaystyle\lim_{\tau\rightarrow\infty}\displaystyle\int_{\tau_0}^{\tau}\displaystyle\frac{dx}{\varphi(x)}.$ 
    
   Near $\tau=0$, the momentum profile has a double zero,
   
$$\varphi(\tau)=a\tau^2+O(\tau^3),\qquad a>0.$$

By the momentum-profile criterion \cite[Proposition~2.3, Table~2.1]{HS02}, this corresponds to a finite-area cusp: the distance $\int_0^\epsilon \frac{d\tau}{\sqrt{\varphi(\tau)}}$ is infinite, while the fibre area is finite. Hence the metric is complete and has a finite-area cusp near $\tau=0$.
\end{proof}

Note that locally, we can rewrite the $S^1$-invariant scalar-flat K\"ahler metric $\omega_{HS}$ under the \textbf{\textit{LeBrun ansatz}} in \cite{lebrun1991explicit}. On a neighborhood $U\subset\R^3$ with coordinates $x,y,\tau$, define $w:U\rightarrow\R$ to be the inverse of $\varphi(\tau)$, and 
we define a function $u: U\rightarrow\R$ by

$$u\coloneqq\log \displaystyle\frac{\tau^2}{\left(1+\displaystyle\frac{x^2+y^2}{2}\right)^2}.$$

Then $u$ is a solution to the $SU(\infty)$ Toda-Lattice equation 

\begin{equation} \label{lebrun ansatz u}
\partial^2_xu+\partial^2_yu+\partial^2_{\tau}(e^u)=0,
\end{equation}

and $u, w$ satisfies

\begin{equation} \label{lebrun ansatz w}
 \partial^2_xw+\partial^2_yw+\partial^2_{\tau}(we^u)=0.
\end{equation}

Hence locally, for some connection $1$-form $\theta$, we can write

\begin{equation}\label{LeBrun ansatz}
    g_{HS}=e^uw(dx^2+dy^2)+wd\tau^2+w^{-1}\theta^2.
\end{equation}

As discussed in \cite{HS02}, when $\varphi(0)=0, \varphi'(0)=2$, we obtain the classical Burns--Simanca metric. In fact, assuming $\varphi(0)=0, \varphi'(0)=2\beta$ for $\beta\in(0, 1)$, we get

$$\varphi(\tau)=\displaystyle\frac{\tau(\tau+2\beta)}{1+\frac{k}{2}\tau},$$

and hence, obtain a family of conical scalar-flat K\"ahler metrics with cone angle $\beta$ connecting the classical Burns--Simanca and cuspidal metrics. Locally, for $\beta\in[0, 1]$, we can write down this family of $S^1$-invariant metrics in the LeBrun ansatz as in \ref{LeBrun ansatz}, with

$$w=\displaystyle\frac{1+\frac{k}{2}\tau}{\tau(\tau+2\beta)}, \quad u=\log\displaystyle\frac{\tau(\tau+2\beta)}{\left(1+\displaystyle\frac{x^2+y^2}{4}\right)^2}$$

satisfying Equations \eqref{lebrun ansatz u} and \eqref{lebrun ansatz w}.

\section{Further study on the cuspidal scalar-flat K\"ahler metric}\label{section 3}


In this section, we study the behavior of the metric near both the asymptotically Euclidean(AE) end and the cusp end. We start with the simplest setting, where the metric lives on the total space of the punctured line bundle $\mathcal{O}(-1)^*$ over $\CP1$, then extend the arguments to the general setting of total space of $\mathcal{O}(-k)^*$ over $\CP {n-1}$. Recall on $\mathcal{O}(-1)^*$,
$$\varphi(\tau)=\displaystyle\frac{\tau^2}{1+\frac{\tau}{2}},$$
from the relations given by the momentum construction, $\frac{dr}{d\tau}=\varphi(\tau)^{-1}, \frac{df(r)}{d\tau}=\frac{\tau}{\varphi(\tau)}$, we get

\begin{equation}\label{t in terms of tau}
    r=\displaystyle\int\displaystyle\frac{1+\frac{x}{2}}{x^2}\,dx=\frac{1}{2}\log\tau-\frac{1}{\tau},
\end{equation}

$$f(r)=\displaystyle\int\displaystyle\frac{1+\frac{x}{2}}{x}\,dx=\log\tau+\frac{\tau}{2}.$$

Then from (\ref{t in terms of tau}), we can express $\tau$ in terms of $|z|_h^2$ 

$$\tau=\displaystyle\frac{2}{W\left(\displaystyle\frac{2}{|z|_h^2}\right)},$$

where $W$ is the product log function. A key property of the product log function we are going to use is that as $x\rightarrow\infty$,

$$W(x)=\log x-\log\log x+o(1).$$

\paragraph{Behavior near $E$}

As $|z|_h\rightarrow 0$, the dominating term for $W(x)$ is $\log x$, more precisely, for a given $\delta>0$ sufficiently small, there exists $\epsilon$, such that when $0<|z|_h<\epsilon$, we have

$$W\left(\displaystyle\frac{2}{|z|_h^2}\right)=(1-\delta)\log\left(\displaystyle\frac{2}{|z|_h^2}\right)+o(1), $$

and $f(t)=\log \displaystyle\frac{2}{1-\delta}-\log(\log 2-\log|z|_h^2)+\displaystyle\frac{1}{(1-\delta)(\log 2-\log|z|_h^2)}$. Then

$$\omega_{HS}=2p^*\omega_{FS}-2\sqrt{-1}\partial\bar{\partial}\left(\log(\log 2-\log|z|_h^2)+\displaystyle\frac{1}{(1-\delta)(\log 2-\log|z|_h^2)}\right).$$

Furthermore, we can control the growth of the norm of the derivatives of $\omega_{HS}$. Note

$$W'(x)=\displaystyle\frac{W(x)}{x(W(x)+1)},$$

when $x$ is sufficiently large, we get $\displaystyle\frac{W(x)}{W(x)+1}=1+O\left(\frac{1}{\log x}\right)$ , thus we write in this case

$$W'(x)=\displaystyle\frac{1}{x}\left(1+O\left(\frac{1}{\log x}\right)\right).$$

In general, the $n$th derivative of the product log function is given by 

$$W^{(n)}(x)=\displaystyle\frac{W^{n-1}(x)}{x^n\left(1+W(x)\right)^{2n-1}}\displaystyle\sum_{k=1}^na_{k, n}W^k(x).$$

When $x$ is sufficiently large, the dominating term for $W^{(n)}(x)$ is $a_{n, n}\displaystyle\frac{W^{2n-1}(x)}{z^n\left(1+W(x)\right)^{2n-1}}$. Note the term $a_{n, n}$ satisfies the recurrence relation $a_{n, n}=-(n-1)a_{n-1, n-1}$ with $a_{1,1}=1$. Thus 

$$a_{n,n}=(-1)^{n-1}(n-1)!.$$

Hence, when $x$ is sufficiently large, we can write

$$W^{(n)}(x)=\displaystyle\frac{(-1)^{n-1}(n-1)!}{x^n}\left(1+O\left(\frac{1}{\log x}\right)\right).$$

At the same time, we know $$\displaystyle\frac{d^n\log x}{dx^n}=\displaystyle\frac{(-1)^{n-1}(n-1)!}{x^n},$$

then for index $I$ with $|I|=k$, we know

$$\displaystyle\frac{\partial^k\tau}{\partial z^I}=\displaystyle\frac{d^k \left(\displaystyle\frac{2}{\log\frac{2}{|z|_h^2}}\right)}{dz^I}\left(1+O\left(\frac{1}{\log|z|_h^2}\right)\right).$$

The goal is to show that the Hwang-Singer metric is of Poincar\'e type. For this, we consider the reference metric $\omega_h\coloneqq 2p^*\omega_{FS}-\sqrt{-1}\partial\bar{\partial}\log(\log2-\log|z|_h^2)$, and let $f_h\coloneqq -\log(\log2-\log|z|_h^2)$. From \cite{Auv17}, we know that if we take a polydisc $U$ around $E$ and write $E\cap U=\{z_1=0\}$, $U\backslash E=\Delta^*\times\Delta$, then near $E$, $\omega_h$ is mutually bounded by a product of Poincar\'e metric on punctured disc with a smooth metric $\omega_{U}=\displaystyle\frac{\sqrt{-1}dz_1\wedge d\bar{z}_1}{|z_1|^2\log^2|z_1|^2}+\omega_h|_E,$ which has bounded derivatives at any order with respect to an orthonormal frame to be defined below. Write $\tau_h\coloneqq \displaystyle\frac{2}{\log\frac{2}{|z|_h^2}}$, then $\tau=\displaystyle\frac{1}{1-\delta}\tau_h$, and for $k\ge 2$,

$$\displaystyle\frac{\partial^k f}{\partial^k \tau}=\displaystyle\frac{\partial^{k-1}\left(\frac{1}{\tau}+\frac{1}{2}\right)}{\partial^{k-1}\tau}=\displaystyle\frac{\partial^{k-2}\left(\frac{1}{-\tau^2}\right)}{\partial^{k-2}\tau}, \quad \displaystyle\frac{\partial^k f_h}{\partial^k \tau_h}=\displaystyle\frac{\partial^{k-1}\left(\frac{1}{\tau_h}\right)}{\partial^{k-1}\tau_h}=\displaystyle\frac{\partial^{k-2}\left(\frac{1}{-\tau_h^2}\right)}{\partial^{k-2}\tau_h}.$$

It follows that $\displaystyle\frac{\partial^k f}{\partial^k \tau}=\displaystyle\frac{1}{(1-\delta)^k}\displaystyle\frac{\partial^k f_h}{\partial^k \tau_h}$.

We fix a coframe $e^1=dz^1, e^2=\displaystyle\frac{1}{|z|_h\log|z|_h}dz^2$, then for $k\ge 2$,

\begin{equation*}
    \displaystyle\frac{\partial^k f}{\partial e^I}=\displaystyle\frac{\partial^k f}{\partial^k \tau}\displaystyle\frac{\partial^k\tau}{\partial z^I}\displaystyle\frac{\partial z^I}{\partial e^I}de^I=\displaystyle\frac{1}{(1-\delta)^k} \displaystyle\frac{\partial^k f_h}{\partial^k \tau_h}\displaystyle\frac{\partial\tau^k}{\partial z^I}\displaystyle\frac{\partial z^I}{\partial e^I}de^I
    =\displaystyle\frac{1+O\left(\frac{1}{\log|z|_h^2}\right)}{(1-\delta)^{k}}\displaystyle\frac{\partial^k f_h}{\partial^k \tau_h}\displaystyle\frac{d^k\tau_h}{dz^I}\displaystyle\frac{\partial z^I}{\partial e^I}de^I=\displaystyle\frac{1+O\left(\frac{1}{\log|z|_h^2}\right)}{(1-\delta)^k}\frac{\partial^kf_h}{\partial e^I},
\end{equation*}

hence for $j\ge 2$, given $\dt$ and $|z|_h$ sufficiently small,

$$\nabla^j_{\omega_U}f=\nabla^j_{\omega_U}f_h\cdot\left(1+O\left(\frac{1}{\log|z|_h^2}\right)\right).$$

Or equivalently, $\nabla^j_{\omega_h}f=\nabla^j_{\omega_h}f_h\cdot\left(1+O\left(\frac{1}{\log|z|_h^2}\right)\right)$ for $j\ge 2$. Similar arguments show that $\frac{1}{2}\omega_h\le\omega_{HS}\le 2\omega_h$ and $\nabla_{\omega_h}f=\nabla_{\omega_h}f_h\cdot\left(1+O\left(\frac{1}{\log|z|_h^2}\right)\right)$. From these discussions, we conclude that $\omega_{HS}$ is a Poincar\'e type K\"ahler metric.


\begin{rmk}
In fact, from our computation, we further see the Hwang-Singer metric is actually asymptotically hyperbolic, which is a special type of Poincar\'e type K\"ahler metric. We also mention that unlike here, where we used explicit computation to show the metric is asymptotically hyperbolic, Sz\'ekelyhidi provided a different proof; for details, see \cite{Sze06} Theorem 5.1.3.  
\end{rmk}

\paragraph{Asymptotic behavior away from $E$}\label{asymptotic away from E}
As $|z|_h\rightarrow\infty$, $\log\tau=2\log|z|_h+O(\tau^{-1})$, thus 

$$f=\log|z|_h^2+\frac{1}{2}|z|_h^2+1+O(|z|_h^{-2}),$$ and $\omega_{HS}=\sqrt{-1}\partial\bar{\partial}\left(|z|_h^2+4\log|z|^2_h+O(|z|_h^{-2})\right)$. We rescale $\omega_{HS}$ by a factor of $\frac{1}{2}$ so that it lives in the K\"ahler class $[\omega_{FS}]$ and perform change of variables $z\mapsto\frac{z}{\sqrt 2}$, then near the divisor $E$, 

\begin{equation}\label{asymptotics of HS n=2}
\omega_{HS}=\displaystyle\sqrt{-1}\partial\bar{\partial}\left(|z|_h^2+2\log |z|_h+\varphi_2(z)\right),
\end{equation}

 for some $\varphi_2=O(|z|_h^{-2})$; near $E$, 

 $$\omega_{HS}=\ddb\left(\log|z|_h^2-\log(\log2-\log|z|_h^2)+O\left(\frac{1}{\log|z|_h^2}\right)\right).$$

\paragraph{$\mathcal{O}(-k)$ case}

We have studied the cuspidal metric on $\mathcal{O}(-1)^*$; in fact, similar behavior of the metric can be seen in $\mathcal{O}(-k)^*$ for $k>1$. Note

\begin{equation}\label{t in terms of tau for k}
    r=\displaystyle\int\displaystyle\frac{1+\frac{kx}{2}}{x^2}\,dx=\frac{k}{2}\log\tau-\frac{1}{\tau},
\end{equation}

$$f(r)=\displaystyle\int\displaystyle\frac{1+\frac{kx}{2}}{x}\,dx=\log\tau+\frac{k\tau}{2}.$$

Near $\tau=0$, we solve for $\tau$, and get

$$\tau=\displaystyle\frac{\displaystyle\frac{2}{k}}{W\left(\displaystyle\frac{2}{k}\left(\displaystyle\frac{1}{|z|^2_h}\right)^{1/k}\right)}.$$

A similar analysis shows that it is still of Poincar\'e type as $\tau\rightarrow 0$. For asymptotic behavior away from $E$, let $|z|_h^2\rightarrow\infty$, we have $\log\tau=\frac{2}{k}\log|z|_h+O(\tau^{-1})$. Consider 

$$q:\mathcal{O}(-k)\rightarrow\mathbb{C}^{n}\slash\Gamma_k,$$

precomposing with it we obtain $f(r\circ q^{-1})=(\log\tau+k\tau/2)\circ q^{-1}$, and the expression for the potential is written as

$$\displaystyle\frac{k}{2}\sum|z_i|^2+\log\sum|z_i|^2.$$

Similar to above, after suitable rescaling of the metric and change of variables, we can normalize $\omega_{HS}$ such that near the AE end

$$\omega_{HS, k}=\displaystyle\sqrt{-1}\partial\bar{\partial}\left(|z|_h^2+c\log |z|_h+\varphi_{2,k}(z)\right),$$ 

for some $\varphi_{2,k}=O(|z|_h^{-2})$ and some constant $c$.

\paragraph{Higher dimension case}
The previous discussion are restricted to where the base manifold is $\C\mathbb{P}^{1}$. The next step is to discuss the situation of higher dimension where the base manifold is $\C\mathbb{P}^{n-1}$ for $n\ge 3$. On $\mathcal{O}(-k)$ and $\beta=-\frac{k}{2}$, take $\omega_M=\omega_{FS}$ where Ric $\omega_{FS}=n\cdot\omega_{FS}$, then solving the SFK equation we obtain

\begin{equation}\label{higher dimension expression}
    \varphi(\tau)=\displaystyle\frac{2}{n\beta^2}\left(1-\beta\tau+\displaystyle\frac{n-1}{(1-\beta\tau)^{n-1}}-\displaystyle\frac{n}{(1-\beta\tau)^{n-2}}\right).
\end{equation}

For notational simplicity, we write the computation in the case $\beta=-1$, equivalently $k=2$ under the normalization $\beta=-k/2$. The general case differs only by a constant rescaling of the momentum variable. For general $k$, let

$$a=-\beta=\frac{k}{2},\qquad s=a\tau.$$

Then the expansion of the momentum profile is reduced to the same form as in the case $a=1$, with constants depending on $k$. The powers appearing in the asymptotic expansions are unchanged.

As $\tau\rightarrow\infty$, we have

$$r=\frac{1}{2}\displaystyle\int\displaystyle\frac{1}{1+x}\left(1+\displaystyle\frac{1+nx}{(1+x)^n}+O(x^{2-2n})\right)\,dx=\frac{1}{2}\log(1+\tau)+d_1(1+\tau)^{1-n}+d_2(1+\tau)^{-n}+O(\tau^{2-2n}),$$

$$f(r)=\frac{1}{2}\displaystyle\int\left(1-\displaystyle\frac{1}{1+x}\right)\left(1+\displaystyle\frac{1+nx}{(1+x)^n}+O(x^{2-2n})\right)\,dx=\frac{1}{2}\tau-\frac{1}{2}\log(1+\tau)+d_3(1+\tau)^{1-n}+d_4(1+\tau)^{-n}+O(\tau^{3-2n}),$$

where $d_1, \cdots, d_4$ are all constants. It follows that

$$f=\frac{1}{2}|z|^2_h-\frac{1}{2}\log|z|_h+d_3|z|_h^{4-2n}+|z|^{2-2n}+O(|z|_h^{6-4n}),$$

and the metric is written as

\begin{equation}\label{asymptotics of HS n ge 3}   \omega_{HS}=\displaystyle\sqrt{-1}\partial\bar{\partial}(|z|_h^2+\varphi_2(z)),
\end{equation}

for some $\varphi_2=d_3|z|_h^{4-2n}+O(|z|_h^{2-2n})$.

\medskip

For small $\tau$, as $\tau\rightarrow 0$, write $r=a_{-1}\frac{1}{\tau}+a_0\log\tau+O(\tau)$, then $\log|z|_h^2=\frac{a_{-1}}{\tau}+a_0\log\tau+O(\tau)$, giving us

$$\tau=\displaystyle\frac{-a_{-1}}{a_0W\left(\displaystyle\frac{-a_{-1}}{a_0}\left(\displaystyle\frac{1}{|z|^2_h}\right)^{2/a_0}\right)},$$

showing that they are still of Poincar\'e type. Tracing the coefficients more carefully, we obtain the following expansion

$$\omega_{HS}=\ddb\left(\log|z|_h^2-\frac{2}{n(n-1)}\log(a-\log|z|^2_h)+O\left(\frac{1}{\log|z|_h^2}\right)\right),$$

for some constant $a$. This expansion is also discussed in \cite{FYZ16}. 

For the remaining cases where $k\ne 2$, the situation for $\mathcal{O}(-k)$ is similar, as $|z|_h^2\rightarrow\infty$, 

$$\omega_{HS, k}=\displaystyle\sqrt{-1}\partial\bar{\partial}(|z|_h^2+\varphi_{2, k}(z)),$$

for some $\varphi_{2 ,k}=O(|z|_h^{4-2n})$. As $|z|_h^2\rightarrow 0$, it is again of Poincar\'e type.

\section{Set up of the construction}\label{section 4}

With the scalar-flat K\"ahler Poincar\'e type metric $\omega_{HS}$, we are ready to discuss the cscK equation we aim to solve for the gluing construction. Given $p\in X$, consider a small neighborhood of $p\in X$, with $z$ being the complex normal coordinate, WLOG we assume $|z|<1$. Now we rewrite $\omega_X$ as

$$\omega_X=\displaystyle\sqrt{-1}\partial\bar{\partial}\left(|z|^2+\varphi_1(z)\right),$$

for some $\varphi_1=O(|z|^4)$. For a small constant $\ep>0$, we consider the radius $0<r_\epsilon<1$ of the form 

\begin{equation}\label{choice of r}
r_{\epsilon}=\epsilon^{\frac{2n-1}{2n+1}}.
\end{equation}

This particular exponent is chosen for later estimates to be discussed in Section~\ref{section 8}. We aim at gluing the metric

$$\omega_{HS}=\sqrt{-1}\partial\bar{\partial}
\begin{cases}
|\zeta|^2+\log |\zeta|+\varphi_2(\zeta) & \text{for } n=2,\text{ with } \varphi_2=O(|\zeta|^{-2}),\\
|\zeta|^2+\varphi_2(\zeta) & \text{for } n\ge 3,\text{ with } \varphi_2=O(|\zeta|^{-2n+4})
\end{cases}$$

on $Bl_0\C^n\backslash E$, where $\zeta$ denotes the coordinate on the Hwang--Singer model, to $\omega_X$ on an annulus region $A_{r_\epsilon}\coloneqq B_{2r_\epsilon}\backslash B_{r_\epsilon}$. For this, let $z$ denote the coordinate on $A_{r_\epsilon}$ and set

$$\zeta=\epsilon^{-1}z.$$

Equivalently, setting $\zeta=F_\epsilon(z)=\epsilon^{-1}z$, we have
$$\epsilon^2F_\epsilon^*\omega_{HS}=\sqrt{-1}\partial\bar{\partial}
\begin{cases}
|z|^2+\epsilon^2\log|\zeta|+\epsilon^2\varphi_2(\zeta), & n=2,\\
|z|^2+\epsilon^2\varphi_2(\zeta), & n\ge 3.
\end{cases}$$
Thus the gluing construction matches $\epsilon^2F_\epsilon^*\omega_{HS}$ with $\omega_X$ on the annulus $A_{r_\epsilon}\coloneqq B_{2r_\epsilon}\backslash B_{r_\epsilon}$.

Consider a cut-off function $\gamma: \mathbb{R}\rightarrow [0,1]$ satisfying

$$\gamma(x) = \begin{cases}
  0  & x\le 1, \\
  1 & x\ge 2
\end{cases}$$

we rescale it to $\gamma_1(z)=\gamma\left(\displaystyle\frac{|z|}{r_{\epsilon}}\right)$, and let $\gamma_2=1-\gamma_1$. Note although both $\gamma_1$ and $\gamma_2$ depend on $\ep$, we will omit them in the notation for simplification. Now, we define an approximate solution of the cscK equation as our candidate for the actual solution,

\begin{equation}\label{the perturbed metric}
    \omega_{\epsilon} \coloneqq  \begin{cases}
  \omega_X  & \text{on } X\backslash B_{2r_\epsilon},\\
    \sqrt{-1}\partial\bar{\partial}\left(|z|^2+\gamma_1(z)\varphi_1(z)+\epsilon^2\gamma_2(z)\varphi_2(\zeta)\right) & \text{on } A_{r_\epsilon} \text{ for } n\ge 3,\\

   \sqrt{-1}\partial\bar{\partial}\left(|z|^2+\gamma_1(z)\varphi_1(z)+\epsilon^2\gamma_2(z)(\log|\zeta|+\varphi_2(\zeta))\right) & \text{on } A_{r_\epsilon} \text{ for } n=2,\\

  \epsilon^2\omega_{HS} & \text{on } B_{r_\epsilon}\backslash p.
\end{cases}
\end{equation}

First note given $\epsilon$ sufficiently small, the metric $\omega_{\epsilon}$ is positive definite everywhere. It's because under direct calculation, we see on $A_{r_{\epsilon}}$, $\gamma_1(z)\varphi_1(z)+\epsilon^2\gamma_2(z)(\varphi_2(\epsilon^{-1}z)+\log|\epsilon^{-1}z|)=O(|z|^4)$ for $n=2$ and $\gamma_1(z)\varphi_1(z)+\epsilon^2\gamma_2(z)(\varphi_2(\epsilon^{-1}z))=O(|z|^4)$ for $n\ge 3$. Then by considering the preimage of $B_{r_{\epsilon}}$ under the blow-down map $\pi$, we can view $\omega_{\epsilon}$ as living on $Bl_pX\backslash E$.

\begin{rmk}
    For a geometric meaning of $\ep$, $\ep^{2n-2}$ can be viewed as the volume of $E$ for our gluing construction.
\end{rmk}

We decompose the operator into linear and non-linear parts to approach the metric defined by this non-linear PDE of the constant scalar curvature equation. Let

$$L_{\omega}(\varphi)\coloneqq \displaystyle\frac{d}{dt}\Bigg|_{t=0}S(\omega+t\sqrt{-1}\partial\bar{\partial}\varphi)=-\mathcal{D}^*_{\omega}\mathcal{D}_{\omega}\varphi+g^{j\bar{k}}\partial_jS(\omega)\partial_{\bar{k}}\varphi$$

be the linearization of the scalar curvature operator. Here $\mathcal{D}_{\omega}\varphi=\bar{\partial}\circ\nabla^{1,0}_{\omega}\varphi$, and 

$$\mathcal{D}^*_{\omega}\mathcal{D}_{\omega}\varphi=\frac{1}{2}\Delta^2_{\omega}\varphi+\langle i\partial\bar{\partial}\varphi, \rho\rangle+\frac{1}{2}\langle d\varphi, dS(\omega)\rangle,$$

is known as the Lichnerowicz operator. $\rho$ denotes the Ricci form for $\omega$. When $\omega$ is cscK, the linearization $L_{\omega}(\varphi)$ coincides with the negative of the Lichnerowicz operator. 

\medskip

Consider the following decomposition of $Bl_pX\backslash E$. Set

$$R_{\epsilon}\coloneqq \displaystyle\frac{r_{\epsilon}}{\epsilon}, \quad X_{r_{\ep}}\coloneqq X\backslash B_{r_{\ep}},$$

then 

$$Bl_pX\backslash E=X_{r_{\ep}}\coprod Bl_pB_{R_{\ep}}\backslash E.$$

We study the Lichnerowicz operator in order to construct, on each piece, a family of cscK metrics parametrized by suitable boundary data. The gluing problem is then reduced to choosing the boundary data so that the two families match over the gluing annulus. 

We follow the framework of Arezzo--Pacard's blow-up construction, as adapted to the Poincar\'e type setting in \cite{Sek18}. This is well suited to the present problem, since the construction naturally splits into $X_{r_\epsilon}$ and the Hwang--Singer model piece $Bl_0B_{R_\epsilon}\backslash E$. A major difference in our case is that, due to the complexity introduced by the behavior of $\omega_{HS}$, we need to first consider solving a modified cscK equation on $Bl_0\C^n\backslash E$, to be introduced in Section \ref{section 8}. It would also be very interesting to adapt Sz\'ekelyhidi's compact blow-up framework to the present Poincar\'e type setting.

For later purposes, we will modify $\omega_{\ep}$ a bit on the gluing annulus to obtain a better approximation. In the annulus region, the metric is close to the Euclidean metric, and thus, the Lichnerowicz operator is close to the standard bilaplacian operator. We want to find biharmonic functions on each piece with given boundary data.

\begin{lem}(\cite{AP09}, Proposition 4.3, 4.4)
    Given $h\in C^{4, \alpha}(\partial B_1), k\in C^{2, \al}(\partial B_1)$, there exists $H^1_{h, k}\in C^{4, \al}(\bar{B}_1)$ satisfying

    $$\Delta^2_{Euc}H^1_{h, k}=0 \text{ in }B_1;\quad H^1_{h, k}=h,\quad \Delta_{Euc} H^1_{h, k}=k \text{ on }\partial B_1,$$

    with the following control of bounds

    $$\|H^1_{h, k}\|_{C^{4, \al}(\bar{B}_1)}\lesssim \|h\|_{C^{4, \alpha}(\partial B_1)}+\|k\|_{C^{2, \al}(\partial B_1)}.$$

    Given $h\in C^{4, \alpha}(\partial B_1), k\in C^{2, \al}(\partial B_1)$, and further $\displaystyle\int_{\partial B_1}k=0$, there exists  $H^0_{h, k}\in C^{4, \alpha}_{4-2n}(\C^n\backslash B_1)$ satisfying
    
    $$\Delta^2_{Euc}H^0_{h, k}=0 \text{ in }\C^n\backslash \bar{B}_1;\quad H^0_{h, k}=h, \quad \Delta_{Euc} H^0_{h, k}=k \text{ on }\partial B_1,$$

    with the following control of bounds

    $$\|H^0_{h, k}\|_{C^{4, \al}_{3-2n}(\C^n\backslash B_1)}\lesssim \|h\|_{C^{4, \alpha}(\partial B_1)}+\|k\|_{C^{2, \al}(\partial B_1)}.$$
\end{lem}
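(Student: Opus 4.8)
My plan is to reduce the fourth-order boundary value problem for $\Delta^2_{Euc}$ to a pair of second-order ones by setting $v \coloneqq \Delta_{Euc} u$. Since $\Delta^2_{Euc} u = \Delta_{Euc} v$, requiring $u$ to be biharmonic is equivalent to requiring $v$ to be harmonic, and the two pieces of boundary data decouple cleanly: the condition $\Delta_{Euc} u = k$ on $\partial B_1$ becomes $v = k$ on $\partial B_1$, while $u = h$ on $\partial B_1$ remains as the Dirichlet datum for the second solve. Thus in both cases I would first solve a harmonic (Dirichlet) problem for $v$ with boundary value $k$, and then solve the Poisson problem $\Delta_{Euc} u = v$ with boundary value $h$.

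For the interior statement this reduction is immediate. Let $v$ be the harmonic extension of $k$ to $B_1$; by boundary Schauder estimates for the Laplacian one has $v \in C^{2,\alpha}(\bar B_1)$ with $\|v\|_{C^{2,\alpha}(\bar B_1)} \lesssim \|k\|_{C^{2,\alpha}(\partial B_1)}$. Then solving $\Delta_{Euc} u = v$ in $B_1$ with $u = h$ on $\partial B_1$, the source $v$ lies in $C^{2,\alpha}$ and the boundary datum $h$ in $C^{4,\alpha}$, so Schauder theory promotes the solution to $H^1_{h,k} \coloneqq u \in C^{4,\alpha}(\bar B_1)$ with $\|u\|_{C^{4,\alpha}(\bar B_1)} \lesssim \|v\|_{C^{2,\alpha}(\bar B_1)} + \|h\|_{C^{4,\alpha}(\partial B_1)} \lesssim \|k\|_{C^{2,\alpha}(\partial B_1)} + \|h\|_{C^{4,\alpha}(\partial B_1)}$. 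By construction $\Delta^2_{Euc} u = \Delta_{Euc} v = 0$, $u|_{\partial B_1} = h$, and $\Delta_{Euc} u|_{\partial B_1} = v|_{\partial B_1} = k$, which is exactly the claim.

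For the exterior statement the same reduction applies, but now I must track the behavior at infinity and work in the weighted Hölder spaces $C^{k,\alpha}_\nu(\C^n \setminus B_1)$. I would first solve the exterior harmonic Dirichlet problem for $v$ with $v|_{\partial B_1} = k$ and $v \to 0$ at infinity. Expanding $k$ in spherical harmonics on $S^{2n-1}$, the decaying harmonic extension has a multipole expansion whose $\ell$-th mode decays like $r^{-(2n-2+\ell)}$, and the hypothesis $\int_{\partial B_1} k = 0$ says precisely that the monopole ($\ell=0$) coefficient of $k$ vanishes, so $v = O(r^{1-2n})$ with leading mode $\ell \geq 1$. This is the crucial point: the indicial roots of $\Delta_{Euc}$ acting on the $\ell$-th mode over $\R^{2n}\setminus B_1$ are $\ell$ and $-(2n-2+\ell)$, and when I solve $\Delta_{Euc} u = v$ for a decaying $u$ the monopole of $v$ would feed into the exceptional weight. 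In the resonant dimension $n=2$ this is exactly the weight at which $\Delta_{Euc}(\log r) \neq 0$, so a nonzero monopole would force a non-decaying logarithmic term; killing it via $\int_{\partial B_1} k = 0$ removes precisely this obstruction. With the monopole gone I can solve $\Delta_{Euc} u = v$ with $u = h$ on $\partial B_1$ inside the weighted category, using the weighted mapping (Schauder) theory for $\Delta_{Euc}$ on $\C^n \setminus B_1$ at a non-exceptional weight, to obtain $H^0_{h,k} \in C^{4,\alpha}_{4-2n}(\C^n\setminus B_1)$ together with the estimate $\|H^0_{h,k}\|_{C^{4,\alpha}_{3-2n}(\C^n\setminus B_1)} \lesssim \|h\|_{C^{4,\alpha}(\partial B_1)} + \|k\|_{C^{2,\alpha}(\partial B_1)}$.

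The routine ingredients here -- existence and Schauder estimates for the harmonic and Poisson problems on the ball and its exterior -- are standard. The part demanding care, and the one I expect to be the main obstacle, is the exterior decay bookkeeping: one must keep the chosen weights off the exceptional indicial values $\{\,\ell,\ -(2n-2+\ell)\,\}_{\ell\ge 0}$ so that $\Delta_{Euc}$ is an isomorphism between the relevant weighted spaces, and it is exactly the balancing condition $\int_{\partial B_1} k = 0$ that lets one avoid the critical weight responsible for the logarithmic growth when $n=2$. A clean way to make the mode analysis fully explicit, and to read off both the decay rate and the role of the monopole condition, is to carry out the two solves by separation of variables in spherical harmonics and to verify the weighted bounds mode by mode.
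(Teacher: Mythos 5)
Your proof is correct, and it is worth noting at the outset that the paper itself gives no proof of this lemma: it is quoted verbatim from Arezzo--Pacard (\cite{AP09}, Propositions 4.3 and 4.4), whose proof constructs the biharmonic extensions explicitly, mode by mode, in the eigenfunction decomposition of the sphere $S^{2n-1}$. Your reduction to two second-order solves ($v=\Delta_{Euc}u$ harmonic with Dirichlet datum $k$, then $\Delta_{Euc}u=v$ with Dirichlet datum $h$) is a clean repackaging of the same argument: the interior case then follows from two applications of classical Schauder theory with no series manipulation at all, and the exterior case rests on exactly the indicial-root bookkeeping that drives the Arezzo--Pacard computation, so the mathematical content is the same. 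One small point of precision: the role of the hypothesis $\int_{\partial B_1}k=0$ is slightly different in the two dimension ranges, and your phrasing blurs this. For $n=2$ the monopole of $v$ is proportional to $r^{-2}$, whose particular solution under $\Delta_{Euc}$ is a multiple of $\log r$, so the mean-zero condition is genuinely needed for a decaying solution to exist --- this is the resonance you identify. For $n\ge 3$, however, the weight $4-2n$ hit by a nonzero monopole is \emph{not} an exceptional indicial value (it equals $2-2n-\ell$ for no $\ell\ge 0$, and equals $\ell\ge 0$ only when $n=2$); a decaying solution exists regardless, lying in $C^{4,\alpha}_{4-2n}$, and the mean-zero condition is what upgrades the decay so that the stated bound in the stronger norm $C^{4,\alpha}_{3-2n}$ holds, the leading surviving mode being $\ell=1$ with rate $r^{3-2n}$. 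Stating this distinction explicitly would also explain why the lemma records membership in $C^{4,\alpha}_{4-2n}$ but the estimate in the weight $3-2n$.
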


Here $H^{0}_{h,k}$ lives in the weighted H\"older space and we postpone its definition to the next section. $C^{k,\alpha}(\overline B_1)$ denotes the standard Euclidean H\"older space up to the boundary. More precisely, identifying $\mathbb C^n$ with $\mathbb R^{2n}$, we set
$$C^{k,\alpha}(\overline B_1)=\left\{u\in C^k(B_1):D^\beta u \text{ extends continuously to } \overline B_1 \text{ for } |\beta|\le k,\ \|u\|_{C^{k,\alpha}(\overline B_1)}<\infty\right\}.$$


With this Lemma, for radius $R_{\ep}$, we construct perturbations on each of the two pieces $Bl_0 B_{R_{\ep}}\backslash E$ and $X_{{r_{\epsilon}}}$. On $Bl_0 B_{R_{\ep}}\backslash E$, given boundary data $\tilde{h}, \tilde{k}$, for cut-off function $\chi^1$ which vanishes on $Bl_0B_1\backslash E$ and equal to $1$ on $Bl_0\C^n\backslash Bl_0B_2$, we define on $Bl_0 B_{R_{\ep}}\backslash E$ the perturbed function

\begin{equation}\label{choice of H^1}
    H^1_{\epsilon, \tilde{h},\tilde{k}}(z)\coloneqq \chi^1(z)\left(H^1_{\tilde{h}^{\perp},\tilde{k}}\left(\displaystyle\frac{z}{R_{\epsilon}}\right)-H^1_{\tilde{h}, \tilde{k}}(0)\right)+H^1_{\tilde{h}, \tilde{k}}(0).
\end{equation}

On $X_{{r_{\epsilon}}}$, given boundary data $h , k$, for a cut-off function $\chi^0$ which vanishes on $X\backslash B_2$ and equals to $1$ on $B_1\backslash{p}$, we define on $X_{r_{\ep}}$

$$H^0_{\epsilon, h, k}(z)\coloneqq \chi^0(z)H^0_{h,k}\left(\displaystyle\frac{z}{r_{\epsilon}}\right).$$

On $X_{r_{\epsilon}}$, we follow \cite{AP09} and perturb the metric with smooth function $\varphi_{\epsilon, h,k}$ such that

\begin{equation}\label{metric on base piece}
    \omega_{\epsilon, h,k}\coloneqq\omega_X+\displaystyle\sqrt{-1}\partial\bar{\partial}(H^0_{\epsilon, h,k}+\varphi_{\epsilon, h,k})
\end{equation}

satisfying cscK equation, for a more precise statement, see Theorem~\ref{theorem cscK on punctured base}.

\medskip

On $Bl_0 B_{R_{\ep}}\backslash E$, taking the perturbed biharmonic extension into account, we want to find a smooth potential function $u_{\epsilon, \tilde{h},\tilde{k}}$ such that the metric

\begin{equation}\label{metric on HS piece}
    \omega_{\epsilon, \tilde{h}, \tilde{k}, HS}\coloneqq\omega_{HS}+\displaystyle\sqrt{-1}\partial\bar{\partial}(H^1_{\epsilon, \tilde{h},\tilde{k}}+u_{\epsilon, \tilde{h},\tilde{k}})
\end{equation}

satisfies a modified cscK equation, which we will introduce in Section~\ref{section 8}. To solve the modified cscK equation, we will first aim at understanding the linearized operator $L_{\omega_{HS}}$ in later sections.

\begin{rmk}
Note that our discussions here and in the upcoming sections are restricted to the situation of blowing up one point, but there are no essential differences when extending the arguments to the situation of blowing up finitely many points $\{p_1, \cdots, p_k\}$. With sufficiently small $\ep$, let

$$X^*_{r_{\ep}}\coloneqq X-\{B_{r_{\ep}}(p_1), \cdots, B_{r_{\ep}}(p_k)\}, \quad B_{R_{\ep}}^i\coloneqq B_{R_{\ep}}(p_i)$$

for $i=1, \cdots k$. On $X^*_{r_{\ep}}$, standard arguments from \cite{arezzo2006blowing} show the existence of cscK metric on it. On each $Bl_0 B_{R_{\ep}}^i\backslash E_i$, we solve the modified cscK equation as in the case of blowing up one point. The matching argument for finitely many gluing annuli is essentially the same for matching over one annulus. Hence, understanding the situation of blowing up one point of $p$ suffices.
\end{rmk}

\section{Weighted H\"older space}\label{section 5}

In this section, we introduce doubly weighted H\"older spaces on $\bl_0\C^n\setminus E$. This is a modification of the weighted spaces introduced by Auvray\cite{Auv14} and used by Sektnan\cite{Sek18}. Auvray's spaces involve only the Poincar\'e weight,  while in Sektnan's blow-up construction the points being blown up lie away from the Poincar\'e divisor. In our setting, the local model is $\bl_0\C^n\setminus E$, where the exceptional divisor itself exhibits the Poincar\'e type behavior, and it also has an AE end. The two weights keep track of the behavior near the Poincar\'e divisor and at the AE end separately.

\medskip

To define H\"older spaces on $X\backslash D$, fix local coordinates $(z_1,\ldots,z_n)$ near $D$ such that $D=\{z_1=0\}$. Since $D$ is smooth, there is only one punctured coordinate. For $\xi\in(0,1)$, consider

$$\Phi_{\xi}: \frac{3}{4}\Delta\times\Delta^{n-1}\rightarrow \Delta^*\times\Delta^{n-1},$$

given by $(w,z_2,\ldots,z_n)\mapsto(\varphi_{\xi}(w),z_2,\ldots,z_n)$. Here $\varphi_\xi:\frac{3}{4}\Delta\to \Delta^*$ is the usual quasi-coordinate map on the punctured disc. 

Let $\{U_i\}_{i=1}^{\ell}$ be a finite set of charts such that $\left(\bigcup_{i=1}^{\ell}U_i\right)\cap D$ covers $D$ and let $U_0$ be an open set satisfying $X=\bigcup_{i=0}^{\ell} U_i$. For a function on $X\backslash D$, we define

$$\|f\|_{C^{k, \alpha}(X\backslash D)}\coloneqq \|f\|_{C^{k, \alpha}(U_0)}+\max_{i=1}^{\ell}\|f\|_{C^{k, \alpha}(U_i\backslash D)}.$$

The next goal is to add \textit{weight} to this H\"older space. Note for Poincar\'e metric $\omega_{\Delta^*}$, we can change to the \textit{logarithmic polar coordinates} $(t, \theta)\in\mathbb{R}\times\mathbb{S}^1$, where $\theta$ is the usual angular coordinate, and

$$t\coloneqq \log\left(-\log(|z|^2)\right).$$

For $\omega_{PT}$ on $X\backslash D$, on a neighborhood of $D$, we can similarly define a function $t$ which is invariant under the $S^1$-action and  differs from $h=\log(\lambda-\log|\sigma|^2)$ by an $O(e^{-h})$ term. Then, the weighted H\"older norm is defined as 

\begin{equation}\label{def weighted norm}
    \|f\|_{C^{k, \alpha}_{\eta}(X\backslash D)}\coloneqq \|e^{t\eta}f\|_{C^{k, \alpha}(X\backslash D)}.
\end{equation}

\begin{rmk}\label{cylinder norm}
    As pointed out in \cite{Auv13-1} Section 3 and \cite{Sek16} Chapter 6, if we consider a neighborhood of $D$ in the holomorphic normal bundle, it can be identified with a tubular neighborhood $N(D)$ of $D$ in $X$ via the exp map defined by $\omega_X$. Note $N(D)$ admits an $S^1$-action coming from that in the holomorphic normal bundle, and it gives us an $S^1$-invariant projection $\pi: N(D)\backslash D\rightarrow D.$ Write $N(D)_a\coloneqq \{x\in N(D)\backslash D: t(x)\ge a\},$ we get an $S^1$-fibration 

    $$\Pi=(\pi, t): N(D)_a\backslash D\rightarrow D\times[a, \infty).$$

    Further, we can define a connection $1$-form $\Theta$ as follows. Let $\xi$ be the infinitesimal generator of the $S^1$-action with $\varphi_s$ being the associated flow. Then we define 

    $$\hat{\Theta}_x\coloneqq \displaystyle\int_{0}^{2\pi}\varphi_s^*\left(\displaystyle\frac{g_x(\cdot,\xi)}{g_x(\xi, \xi)}\right)ds,\quad \Theta_x\coloneqq 2\pi\displaystyle\frac{\hat{\Theta}_x}{\displaystyle\int_{S^1}\hat{\Theta}_x}.$$

    Note for  $\theta\coloneqq\displaystyle\sqrt{-1}\left(\log z-\displaystyle\frac{1}{2}e^t\right)$, we have $\|\nabla^j(\Theta-d\theta)\|=O(1)$, we can write

    \begin{equation}\label{expansion of metric with model}
        g=dt^2+e^{-2t}\Theta^2+\pi^*g_D+O(e^{-t}).
    \end{equation}

    For $f\in C^{k, \alpha}_{\eta}(X\backslash D)$, we can write an orthogonal decomposition of $f$ as  $f=f^0+f^{\ne 0},$ where $f^{0}$ is an $S^1$-invariant function on $N(D)$ and $f^{\ne 0}$ has zero mean on each fiber of $\Pi$. We see $f^0$ can be viewed as a function $[a, \infty)\times D\rightarrow\mathbb{R}$ and our norm defined near the Poincar\'e type cusp is \textbf{\textit{equivalent to that for the cylinder}}. More explicitly, the $C^{k, \alpha}$ norm on the cylinder $[a, \infty)\times D$ defined as $\sup_{s\ge a+1}\|f^0\|_{C^{k, \alpha}([s-1, s+1]\times D, dt^2+g_D)}$ is equivalent to the Poincar\'e type norm for $S^1$-invariant function. For details, see \cite{Sek16}, Lemma~6.7. We immediately see that the weighted norms for the cylinder can be defined similarly and are equivalent to those for Poincar\'e type cusp.
\end{rmk}

Let $X=Bl_0 B_1\backslash E$, then the above discussion gives us the weighted norm on $X$. For a function $f: \mathbb{R}^{2n}-\{0\}\rightarrow\R$, consider $f_r: B_2\backslash B_1\rightarrow\mathbb{R}$ defined as

\begin{equation}\label{f_r}
    f_r(z)\coloneqq r^{-\delta}f(rz),
\end{equation}

then its weighted norm is defined as

\begin{equation}\label{f_r weight}
    \|f\|_{C^{k, \alpha}_{\delta}(\mathbb{R}^{2n}-\{0\})}\coloneqq \sup_{r>0}\|f_r\|_{C^{k, \alpha}(B_2\backslash B_1)}.
\end{equation}

To define weighted H\"older space on $Bl_0\mathbb{C}^n\backslash E$, we use \textbf{\textit{double index}} $\eta, \delta$ which are independent of each other to monitor the growth rate of $f$ near the divisor and on the AE end separately.

\begin{definition}\label{definition norm}
    For a function $f$ defined on $Bl_0\mathbb{C}^n\backslash E$, the H\"older norm with weight $\eta, \delta$ is defined as

\begin{equation}
    \|f\|_{C^{k, \alpha}_{\eta, \delta}(Bl_0\mathbb{C}^n\backslash E)}\coloneqq \|f\|_{C^{k, \al}_{\eta}(Bl_0B_{2}\backslash E)} +\|\gamma f\|_{C^{k, \alpha}_{\delta}(Bl_0\mathbb{C}^n\backslash B_{1})},
\end{equation}

where $\gamma$ is a cut-off function which equals $1$ outside $Bl_0 B_2\backslash E$ and equals $0$ in $Bl_0B_1\backslash E$, then $\gamma f$ can be viewed as a function on $\mathbb{R}^{2n}-\{0\}$, whose norm is defined as in Equation~\eqref{f_r weight}.
\end{definition}

We also introduce the weighted Sobolev space on $Bl_0\mathbb{C}^n\backslash E$ for later use.

\begin{definition}
For a function $f$ defined on $Bl_0\mathbb{C}^n\backslash E$, the Sobolev norm with weights $\eta,\delta$ is defined as
\begin{equation}
    \|f\|_{W^{k,2}_{\eta,\delta}(Bl_0\mathbb{C}^n\backslash E)}
    \coloneqq
    \sum_{i=0}^k
    \|\nabla^i f\|_{L^2_{\eta,\delta}(Bl_0\mathbb{C}^n\backslash E)}.
\end{equation}
Here the weighted $L^2$ norm is defined by
\begin{equation}
    \|f\|_{L^2_{\eta,\delta}(Bl_0\mathbb{C}^n\backslash E)}
    \coloneqq
    \|\Gamma_{\eta,\delta}f\|_{L^2(Bl_0\mathbb{C}^n\backslash E)}.
\end{equation}
The function $\Gamma_{\eta,\delta}$ is a smooth positive function which coincides with $e^{\eta t}$ near the divisor and with $r^{-\delta}$ near the AE end. Geometrically, the weighted norm measures exponential decay of order $\eta$ along the cusp end and polynomial behavior of order $\delta$ along the AE end.
\end{definition}

An inclusion relation between Sobolev and H\"older weighted spaces is given as follows.

\begin{lem}\label{lemma compare Sobolev and Holder} Given $\eta, \eta', \dt\in\R$, 
    $$C^{k, \al}_{\eta, \dt}(Bl_0\mathbb{C}^n\backslash E)\subset W^{2, k}_{\eta', \dt}(Bl_0\mathbb{C}^n\backslash E) \iff \eta'<\eta+\frac{1}{2}.$$
\end{lem}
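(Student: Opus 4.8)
The plan is to localize the comparison to three regions of $Bl_0\mathbb{C}^n\backslash E$: a compact core, the asymptotically Euclidean (AE) end, and the Poincar\'e-type cusp end along $E$. On the compact core every weight is bounded above and below by positive constants, so both the weighted H\"older and weighted Sobolev norms are equivalent to their unweighted versions, and the inclusion is immediate from the standard embedding $C^{k,\alpha}\hookrightarrow W^{2,k}$ on a compact set. On the AE end the H\"older norm and the Sobolev norm carry the \emph{same} weight $\delta$; since no cusp weight enters there, this end is handled by the usual weighted comparison on an asymptotically Euclidean cone and contributes no constraint relating $\eta$ and $\eta'$. Hence the entire content of the threshold is concentrated at the cusp end, and I would reduce the statement to an inclusion of function spaces on the model cylinder $D\times[a,\infty)$ of Remark~\ref{cylinder norm}.

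At the cusp I would work with the cylindrical description \ref{expansion of metric with model}, namely $g=dt^2+e^{-2t}\Theta^2+p^*g_E+O(e^{-t})$, whose volume form satisfies $dV_g\sim e^{-t}\,dt\wedge\Theta\wedge dV_E$ because the fiber direction $e^{-2t}\Theta^2$ contributes the factor $e^{-t}$. For sufficiency ($\eta'<\eta+\tfrac12\Rightarrow$ inclusion), recall that $f\in C^{k,\alpha}_{\eta}$ means $e^{t\eta}f$ is bounded in $C^{k,\alpha}$ with respect to $g$; since $|\nabla t|_g=1+O(e^{-t})$ is bounded, this translates into pointwise bounds $|\nabla^i_g f|_g\le Ce^{-\eta t}$ for all $0\le i\le k$. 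Feeding these into the weighted $L^2$ integrand, each term of the Sobolev norm is controlled by
\[
\int_{t\ge a}\big|e^{\eta' t}\nabla^i_g f\big|^2\,dV_g\;\lesssim\;\int_{t\ge a}e^{(2\eta'-2\eta-1)t}\,dt\cdot\mathrm{vol}(D),
\]
which converges precisely when $2\eta'-2\eta-1<0$, i.e. $\eta'<\eta+\tfrac12$, giving the inclusion.

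For necessity I would argue by contrapositive and exhibit a borderline test function. Taking $\beta(t)$ a smooth cutoff equal to $1$ for large $t$ and supported in the cusp region, set $f=\beta(t)e^{-\eta t}$, an $S^1$-invariant function depending only on $t$. Then $e^{t\eta}f=\beta(t)$ is smooth and bounded together with all its derivatives on the cylinder, so $f\in C^{k,\alpha}_{\eta,\delta}$ for every $\delta$ (it vanishes on the AE end). On the other hand,
\[
\|f\|_{L^2_{\eta'}}^2\;\gtrsim\;\int_{t\ge T}e^{2\eta' t}e^{-2\eta t}\,e^{-t}\,dt\;=\;\int_{t\ge T}e^{(2\eta'-2\eta-1)t}\,dt,
\]
which diverges as soon as $2\eta'-2\eta-1\ge0$, i.e. $\eta'\ge\eta+\tfrac12$ (the borderline case $\eta'=\eta+\tfrac12$ giving a linear divergence). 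Thus the inclusion forces $\eta'<\eta+\tfrac12$, completing the equivalence.

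The main obstacle I expect is the careful passage between the \emph{weighted H\"older} control and honest covariant-derivative bounds taken with respect to the true metric rather than the flat cylinder model: one must absorb the $O(e^{-t})$ error terms in \ref{expansion of metric with model} and control the non-$S^1$-invariant part $f^{0}$ of the fiberwise decomposition from Remark~\ref{cylinder norm}, for which the equivalence of the Poincar\'e-type norm with the cylinder norm (loc. cit.) is the key tool. A secondary technical point is verifying that the test function in the necessity step genuinely lies in $C^{k,\alpha}_{\eta}$ at all orders, including the top-order $\alpha$-H\"older seminorm measured in $g$; restricting to an $S^1$-invariant profile $\beta(t)$ is what keeps this elementary.
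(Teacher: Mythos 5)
Your proof is correct, and its content coincides with what the paper invokes: the paper's entire proof is the observation that the two spaces carry the same weight $\delta$ on the AE end, together with a citation of Lemma 2.3 of \cite{Sek18} for the cusp comparison, and your argument is essentially a self-contained proof of that cited lemma. The three-region decomposition, the computation against the cusp volume form $e^{-t}\,dt\wedge d\theta\wedge\omega_E$ yielding $\int e^{(2\eta'-2\eta-1)t}\,dt$ and hence the threshold $\eta'<\eta+\frac{1}{2}$, and the $S^1$-invariant test function $\beta(t)e^{-\eta t}$ for necessity are exactly the mechanism packaged in Sektnan's statement. What the citation buys the paper is brevity and consistency of conventions with \cite{Sek18}; what your route buys is that both directions of the equivalence are actually exhibited and the origin of the constant $\frac{1}{2}$ (the linear rate $e^{-t}$ of volume decay at the cusp) is made transparent. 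One caveat, which you inherit from the paper rather than introduce: in both treatments the AE end is dismissed with the remark that the weights agree, but under the paper's literal conventions ($\Gamma_{\eta,\delta}=r^{-\delta}$ integrated against the ALE volume form $\sim r^{2n-1}\,dr$) a function with the permitted H\"older growth $r^{\delta}$ fails to lie in the corresponding weighted $L^2$ space on that end, so the equal-weight AE inclusion is really a statement about an unstated cylindrical normalization of the Sobolev measure; your phrase ``contributes no constraint relating $\eta$ and $\eta'$'' is accurate, but the inclusion itself on that end is not automatic, and this shared gloss is the only place where either proof falls short of the literal iff.
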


\begin{proof}
    Note the weight for the two spaces on the AE end are the same, then the result follows immediately from Lemma 2.3 in \cite{Sek18}.
\end{proof}

\section{Linear theory}\label{section 7}

In this section, we study the mapping properties of the linearized operator $L_{\omega_{HS}}$ on $\bl_0\C^n\backslash E$ between weighted H\"older spaces. The main result is Theorem~\ref{theorem bounded inverse for cusp}, where we prove the surjectivity statement needed for the linearized problem. We begin with the Schauder estimate for $L_{\omega_{HS}}$.

\begin{thm}\label{Schauder estimates}
    Given $f\in L^2_{\eta+\frac{1}{2}, \delta}(Bl_0\mathbb{C}^n\backslash E)$ and suppose $L_{\omega_{HS}}f\in C^{0, \alpha}_{\eta, \delta-4}(Bl_0\mathbb{C}^n\backslash E)$, then $f\in C^{4, \alpha}_{\eta, \delta}(Bl_0\mathbb{C}^n\backslash E)$ and moreover,

    $$\|f\|_{C^{4, \alpha}_{\eta, \delta}(Bl_0\mathbb{C}^n\backslash E)}\lesssim\|f\|_{L^2_{\eta+\frac{1}{2}, \delta}(Bl_0\mathbb{C}^n\backslash E)}+\|L_{\omega_{HS}}f\|_{C^{0, \alpha}_{\eta, \delta-4}(Bl_0\mathbb{C}^n\backslash E)}.$$
\end{thm}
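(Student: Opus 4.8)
The plan is to prove this elliptic estimate by a localization argument, treating the two ends of $Bl_0\mathbb{C}^n\backslash E$ separately and patching with interior estimates on the compact core. The statement is a global weighted $C^{4,\alpha}$ estimate for the fourth-order operator $L_{\omega_{HS}}=-\mathcal{D}^*_{\omega_{HS}}\mathcal{D}_{\omega_{HS}}$, with the input measured in a weighted $L^2$ space (to control $f$ itself) plus a weighted $C^{0,\alpha}$ space (to control the image). The essential point is that both weights $\eta$ (near the cusp) and $\delta$ (near the AE end) are chosen to be non-indicial, so that the model operators at infinity are invertible in the relevant weighted sense; the statement as phrased is an a priori estimate, valid whenever $f$ already lies in the stated spaces, so I do not need to produce the solution but only to bound it.

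First I would set up three regions using a partition of unity subordinate to the decomposition $Bl_0\mathbb{C}^n\backslash E = (\text{cusp end})\cup(\text{compact core})\cup(\text{AE end})$, matching the cutoffs defining the doubly-weighted norm in Definition~\ref{definition norm}. On the compact core, $L_{\omega_{HS}}$ is a uniformly elliptic fourth-order operator with smooth coefficients on a relatively compact set, so the classical interior Schauder estimate gives $\|f\|_{C^{4,\alpha}}\lesssim \|L_{\omega_{HS}}f\|_{C^{0,\alpha}}+\|f\|_{C^{0}}$, and the $C^0$ term is absorbed by the $L^2$ norm via Sobolev embedding on the compact region. On the cusp end I would pass to the logarithmic cylindrical coordinates of Remark~\ref{cylinder norm}, where by \eqref{expansion of metric with model} the geometry is asymptotic to the product $dt^2+e^{-2t}\Theta^2+p^*g_E$ and $L_{\omega_{HS}}$ is uniformly comparable to a translation-invariant model operator on the cylinder $[a,\infty)\times D$. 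Here I would invoke the local scale-invariant Schauder estimates on unit-length cylindrical segments $[s-1,s+1]\times D$ (which hold with constants uniform in $s$ because the rescaled metrics are uniformly bounded in $C^\infty$), multiply by the weight $e^{\eta t}$, and take the supremum over $s$; the weighted $L^2$ term on the right, with its shifted weight $\eta+\tfrac12$, arises precisely from converting the $C^0$-type lower-order term produced by the local estimate into the $L^2_{\eta+1/2}$ norm, using that $\eta$ avoids the indicial roots of the cusp model so the zeroth-order contribution does not create a resonance. On the AE end I would use the rescaling map $f_r(z)=r^{-\delta}f(rz)$ from \eqref{f_r}, under which $L_{\omega_{HS}}$ on the annulus $B_{2r}\backslash B_r$ transforms, after the scaling behaviour $L_{\omega_{HS}}\sim r^{-4}\Delta^2_{\mathrm{Euc}}+\cdots$, into a uniformly elliptic operator on the fixed annulus $B_2\backslash B_1$; applying the fixed-annulus Schauder estimate and taking $\sup_{r>0}$ reproduces the $C^{4,\alpha}_\delta$ norm on the left and the $C^{0,\alpha}_{\delta-4}$ norm on the right, the weight shift by $4$ reflecting the order of the operator.

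I expect the main obstacle to be the careful handling of the cusp end, specifically the bookkeeping that turns the crude lower-order term from the local elliptic estimate into the sharp $L^2_{\eta+1/2,\delta}$ norm on the right-hand side. Two points require care: first, that the local Schauder constants on the cylindrical segments are genuinely uniform in $s$, which needs the uniform $C^\infty$-bounds on $\omega_{HS}$ near the cusp established in Section~\ref{section 3} (the Poincar\'e type property and the control $\nabla^j_{\omega_h}f = \nabla^j_{\omega_h}f_h\cdot(1+O(1/\log|z|_h^2))$); and second, that absorbing the zeroth-order term does not fail because of a resonance, which is exactly where the non-indicial hypothesis on $\eta$ enters and is the reason the weighted $L^2$ norm, rather than a weighted $C^0$ norm, appears. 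The relation $\eta'<\eta+\tfrac12$ from Lemma~\ref{lemma compare Sobolev and Holder} is what makes the shifted weight $\eta+\tfrac12$ in the $L^2$ term the natural and correct one, and I would cite the indicial-root machinery of \cite{LM85} and \cite{Pac08} to justify that for non-indicial $\underline{\delta}$ the model operator controls the lower-order terms. The AE end and the compact core are comparatively routine, so the proof reduces to making the cusp-end estimate precise and then summing the three pieces against the partition of unity.
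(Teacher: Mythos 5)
Your overall architecture (cusp end, compact core, AE end; local Schauder estimates on unit cylindrical segments and on rescaled annuli; then weighted suprema) is the same as the paper's, but your treatment of the cusp end contains a genuine conceptual error. You assert that the estimate requires $\eta$ and $\delta$ to be non-indicial, and that the weighted $L^2$ term arises from \emph{absorbing} the zeroth-order term thanks to the absence of resonance. The theorem has no indicial-root hypothesis, and none is needed: the whole point of this weak Schauder estimate is that the lower-order term is \emph{not} absorbed but simply retained, globally, as $\|f\|_{L^2_{\eta+\frac{1}{2},\delta}}$ on the right-hand side. The shift by $\tfrac{1}{2}$ has nothing to do with resonances; it only accounts for the factor $e^{-t}$ in the Poincar\'e-type volume form (one has $\int e^{2\eta' t}e^{-2\eta t}e^{-t}\,dt<\infty$ exactly when $\eta'<\eta+\tfrac{1}{2}$, which is the content of Lemma~\ref{lemma compare Sobolev and Holder}), so that the local lower-order terms on the segments $[s-1,s+1]\times E$, weighted by $e^{\eta s}$, are uniformly dominated by the global $L^2_{\eta+\frac{1}{2},\delta}$ norm. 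The indicial machinery of \cite{LM85} and \cite{Pac08} enters only in the next statement, Theorem~\ref{compact estimates}, where the $L^2$ term is localized to a compact set $K$ (that localization is exactly what fails at indicial weights), and in the Fredholm theorem. As written, your argument either proves a weaker statement than the one asserted (by importing a hypothesis the theorem does not have) or hinges on an absorption step that is never actually performed. A smaller instance of the same confusion appears on the compact core: $L^2$ does not control $C^0$ by Sobolev embedding; the replacement of the $C^0$ lower-order term by an $L^2$ term is a standard interpolation/compactness refinement of the interior estimate, not an embedding.

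There is also a technical gap at the cusp end: you apply cylindrical Schauder estimates directly to $f$, but the equivalence between the Poincar\'e-type H\"older norm (which is defined through the quasi-coordinate charts $\Phi_\xi$, with the weight added in \ref{def weighted norm}) and the cylindrical norm on $[a,\infty)\times E$ holds for $S^1$-invariant functions; see Remark~\ref{cylinder norm} and \cite{Sek16}, Lemma 6.7. This is why the paper decomposes $f=f^{cyl}+f^{0}$ into its $S^1$-invariant part and its fiberwise mean-zero part and estimates the two pieces by separate arguments, following \cite{Sek18}, Proposition 3.2. Without this decomposition, the step ``multiply the local estimate by $e^{\eta t}$ and take the supremum over $s$'' does not directly reconstruct the norm of $C^{4,\alpha}_{\eta,\delta}$ in Definition~\ref{definition norm}. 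Your AE-end argument (rescaling $f_r(z)=r^{-\delta}f(rz)$ and fixed-annulus estimates) and your treatment of the compact core otherwise match the paper's proof.
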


\begin{proof}

Choose $\lambda,\mu$ sufficiently large so that $\{t>\lambda\}$ is contained in the divisor end and $\{r>\mu\}$ is contained in the AE end. Let
$$K_0=\left(\bl_0\C^n\backslash E\right)\setminus\left(\{t>\lambda+2\}\cup\{r>\mu+2\}\right).$$
After enlarging $K_0$ if necessary, we may also assume that it contains the compact set $K_D$ appearing in the zero-average estimate below. On this fixed compact set $K_0$, the weighted norms are equivalent to the unweighted norms. Hence the standard interior elliptic estimate on a slightly larger compact set gives
$$\|f\|_{C^{4,\alpha}_{\eta,\delta}(K_0)}\le C\left(\|f\|_{L^2_{\eta+\frac{1}{2},\delta}(\bl_0\C^n\backslash E)}+\|L_{\omega_{HS}}f\|_{C^{0,\alpha}_{\eta,\delta-4}(\bl_0\C^n\backslash E)}\right).$$

It remains to deal with the two non-compact regions $\{t>\lambda+2\}$ and $\{r>\mu+2\}$. 

For the control near the divisor, decompose $f$ into $f^{0}$ and $f^{\ne0}$ as in Remark~\ref{cylinder norm}. We identify the $S^1$-invariant function $f^0$ with a function on $[\lambda, +\infty)\times E$, then for $s>\lambda+2$, let $Q_i^s=[s-i,s+i]\times E$. The local Schauder estimate gives

$$\|f^0\|_{C^{4,\alpha}(Q_1^s)}\leq C\left(\|L_{\omega_{HS}}f^0\|_{C^{0,\alpha}(Q_2^s)}+\|f^0\|_{L^2(Q_2^s)}\right),$$

where $C$ is independent of $s$. Note that after translating the coordinate $t$ by $-s$, the pair $Q_1^s\subset\subset Q_2^s$ is identified with the fixed pair $[-1,1]\times E\subset\subset[-2,2]\times E.$  The Schauder constant depends only on these uniform ellipticity and coefficient bounds, the fixed cross-section $E$, and the fixed separation between the two cylinders, hence not on $s$.

Multiplying the estimate by $e^{\eta s}$, and using that $|t-s|\leq 2$ on $Q_2^s$, we have uniformly in $s$ that $\|e^{\eta s}u\|_{C^{m,\alpha}(Q_i^s)}$ and $\|e^{\eta t}u\|_{C^{m,\alpha}(Q_i^s)}$ are mutually bounded for $m\leq 4.$ This implies 

$$\|e^{\eta t}f^0\|_{C^{4,\alpha}(Q_1^s)}\leq C\left(\|e^{\eta t}L_{\omega_{HS}}f^0\|_{C^{0,\alpha}(Q_2^s)}+\|e^{\eta t}f^0\|_{L^2(Q_2^s)}\right).$$

Also note that near the divisor, the volume form $dV_{\omega_{HS}}$ is mutually bounded with $e^{-t}dt\,d\theta\,dV_E$, then
$$\|e^{\eta t}f^0\|_{L^2(Q_2^s)}\leq C\|f^0\|_{L^2_{\eta+\frac12,\delta}(\bl_0\C^n\backslash E)}.$$

Combining the above, we obtain

$$\|e^{\eta t}f^0\|_{C^{4, \alpha}(Q_1^s)}\le C\left(\|f^0\|_{L^2_{\eta+\frac{1}{2}, \dt}(\bl_0\C^n\backslash E)}+\|e^{\eta t}L_{\omega_{HS}}f^0\|_{C^{0,\alpha}(Q_2^s)}\right).$$

For $f^{\ne0}$, let $\Pi_0$ denote the fibrewise $S^1$-average and write $f^{\ne0}=(I-\Pi_0)f.$ Since $L_{\omega_{HS}}$ is $S^1$-invariant near the divisor, it commutes with $\Pi_0$, and hence

$$L_{\omega_{HS}}f^{\ne0}=(I-\Pi_0)L_{\omega_{HS}}f.$$

The zero-average estimate in \cite[Proposition~3.2]{Sek18}, together with its Schauder version \cite[Proposition~3.3]{Sek18}, gives a compact set $K_D\subset\subset \bl_0\C^n\backslash E$ and a constant $C$ such that

$$\|f^{\ne0}\|_{C^{4,\alpha}_{\eta,\delta}(\{t>\lambda+2\})}\leq C\left(\|L_{\omega_{HS}}f^{\ne0}\|_{C^{0,\alpha}_{\eta,\delta-4}(\{t>\lambda\})}+\|f^{\ne0}\|_{L^2_{\eta+\frac12,\delta}(\bl_0\C^n\backslash E)}+\|f^{\ne0}\|_{L^2(K_D)}\right).$$

With $\|f^{\ne0}\|_{L^2_{\eta+\frac12,\delta}}\leq C\|f\|_{L^2_{\eta+\frac12,\delta}}$, $\|f^{\ne0}\|_{L^2(K_D)}\leq C\|f^{\ne0}\|_{L^2_{\eta+\frac12,\delta}(\bl_0\C^n\backslash E)}$ and $\|L_{\omega_{HS}}f^{\ne0}\|_{C^{0,\alpha}_{\eta,\delta-4}}\leq C\|L_{\omega_{HS}}f\|_{C^{0,\alpha}_{\eta,\delta-4}}$, we obtain

$$\|f^{\ne0}\|_{C^{4,\alpha}_{\eta,\delta}(\{t>\lambda+2\})}\leq C\left(\|f\|_{L^2_{\eta+\frac12,\delta}}+\|L_{\omega_{HS}}f\|_{C^{0,\alpha}_{\eta,\delta-4}}\right).$$

Combining these two estimates gives the desired control near the divisor. 

For the control near the AE end, $f$ is viewed as a function on $[\mu, \infty)\times S^{2n-1}$, then for $s>\mu+2$, with similar arguments, we get 

$$\|r^{-\dt}f\|_{C^{4, \alpha}([s-1, s+1]\times S^{2n-1})}\lesssim\|f\|_{L^2_{\eta+\frac{1}{2}, \delta}(\bl_0\C^n\backslash E)}+\|r^{-\dt}L_{\omega_{HS}}f\|_{C^0([s-2, s+2]\times S^{2n-1})},$$

for details, we refer to Lemma 12.1.1 in \cite{Pac08}. 


Combining the estimates on the above three regions gives the desired estimate.
\end{proof}

The strategy to study function $f$ near the Poincar\'e divisor is to decompose the function $f=f^0+f^{\ne0}$ and deal with $f^0$ and $f^{\ne0}$ separately. The $f^0$ part is closely related to the product cylinder. We first introduce a key lemma concerning the $f^0$ term, which will be used later. Consider

$$C^{k, \al}_{\eta;0}([0, \infty)\times E)\coloneqq\{f\in C^{k, \al}_{\eta}([0, \infty)\times E), f(0, \cdot)=\partial_tf(0, \cdot)\equiv0\}.$$

Given $\{f_i\}$ a basis of $\ker \mathcal{D}^*_E\mathcal{D}_E$, and a cut-off function $\chi(t)$, define $\psi_{f_i}\coloneqq \chi(t)\pi^*f_i$. For $f\in \ker \mathcal D_E^*\mathcal D_E$, set

$$\psi_f:=\chi(t)\pi^*f.$$

From \cite{Auv14} Lemma 3.10, there exists a $\kappa>0$ such that for $\eta\in(0, \kappa)$, the following gives an isomorphism 

\begin{equation}\label{Auvray isomorphism}
    L_{\omega_{HS}}: C^{4, \al}_{\eta;0}([0, \infty)\times E)\oplus\text{Span}(\psi_{f_i})\rightarrow C^{0, \al}_{\eta}([0, \infty)\times E).
\end{equation}

Define the following modified H\"older space for functions on $Bl_0\C^n\backslash E$

$$\widetilde C^{0,\alpha}_{\eta,\delta-4}=C^{0,\alpha}_{\eta,\delta-4}\oplus\operatorname{Span}(\chi(t)\pi^*1),$$

where $\chi(t)$ is equal to one for $t$ sufficiently large and vanishes away from the cusp end. When restricting the modified space to the region near the divisor $E$, we obtain

$$\widetilde{C}^{0,\alpha}_{\eta}\coloneqq C^{0, \alpha}_{\eta}\oplus \text{Span}(\chi(t)\pi^* 1)\subset C^{0, \al}_{0}.$$

\begin{lem}\label{lemma S1 invariant inverse}
    The following operator 

$$C^{4,\alpha}_{\eta;0}([0, \infty)\times E)\oplus\text{Span}(\psi_{f_i})\oplus\mathbb{R}\rightarrow \widetilde{C}^{0,\alpha}_{\eta}([0, \infty)\times E)$$

given by

$$(\phi, f, \lambda)\mapsto\mathcal{D}^*_{\omega_{HS}}\mathcal{D}_{\omega_{HS}}(\phi+\psi_f+t\lambda)$$

has a bounded right inverse.
\end{lem}

\begin{proof}
     Given $\psi\in\widetilde{C}^{0,\alpha}_{\eta}([0, \infty)\times E)$, note
    
    $$\mathcal D^*_{\omega_{HS}}\mathcal D_{\omega_{HS}}(t\chi \pi^*1)=\chi\pi^*1+O(e^{-t}),$$
    
     hence there exists $\lambda\in\R$ such that $\psi-\mathcal D^*_{\omega_{HS}}\mathcal D_{\omega_{HS}}(t\chi \pi^*\lambda)\in C^{0, \alpha}_{\eta}$. Then, we conclude the surjectivity from (\ref{Auvray isomorphism}).

\end{proof}

For the $f^{\ne0}$ part, we introduce an essential lemma for the later study of $L_{\omega_{HS}}$. Consider the completion in the weighted Sobolev space $W^{4,2}_{\eta}(T_R)$ of the smooth functions that vanish on $\partial T_R$

$$W^{4,2}_{\eta;0}(T_R)\coloneqq\overline{
\left\{
\varphi\in C^\infty(\overline{T_R}) :
\varphi|_{\partial T_R}=0
\right\}
}^{\,W_{\eta}^{4,2}}.$$

\begin{lem}\label{lemma essential inequality}

Given $f\in W^{4,2}_{0;0}(T_R)$, consider its orthogonal decomposition $f=f^0+f^{\ne0}$. If $f^0=0$, then we have the following inequality

\begin{equation}\label{essential inequality}
    \|\mathcal{D}^*_{\omega_{HS}}\mathcal{D}_{\omega_{HS}}f\|_{L^2_{2}}\ge C\|f\|_{L^2}.
\end{equation}

\end{lem}

\begin{proof}

We first consider the Fourier decomposition of functions on $\bl_0\C^n\backslash E$ with respect to the $S^1$-action, whose infinitesimal generator is denoted by $\partial_z$. This decomposition will be used in the estimates below. We write

$$f=f^{\ne0}=\sum_{k\ne0}f_k$$

with $f_k$ satisfying $\mathcal{L}_{\partial_z}f_k=ikf_k$.

Recall that a $k$-th Fourier mode function $h$ on $Bl_0\C^n\backslash E$ corresponds to a section $\widehat h$ of $\mathcal{O}(k)$ over $\CP{n-1}$. In a local holomorphic trivialization $e_0$, this correspondence is given by

$$h(z,\xi)=u(z)\xi^k,\qquad \widehat h(z)=u(z)(e_0(z)^*)^k.$$

Thus the function $f_k$ corresponds to a section $\widehat f_k$ of $\mathcal{O}(k)$. We further decompose the $f_k$'s. To achieve this, we apply a similar technique as in \cite{HSVZ22}, Section~4.1. Using the spectral decomposition of the Dolbeault Laplacian $\Delta_k$ acting on sections of $\mathcal{O}(k)$, we write

$$\widehat f_k=\sum_{\lambda_0}\widehat f_{k,\lambda_0},\qquad \Delta_k\widehat f_{k,\lambda_0}=\lambda_0\widehat f_{k,\lambda_0}.$$

For each $\lambda_0$, let $f_{k,\lambda_0}$ denote the $k$-th Fourier mode function corresponding to the section $\widehat f_{k,\lambda_0}$ under the above identification. Therefore

$$f_k=\sum_{\lambda_0}f_{k,\lambda_0}.$$

After the above decompositions, we compute the local formula for the Lichnerowicz operator acting on a single component. Given

$$\mathcal{D}_{\omega_{HS}}^*\mathcal{D}_{\omega_{HS}}f=\frac{1}{2}\Delta^2_{\omega_{HS}}f+\langle \rho_{\omega_{HS}}, dd^cf\rangle,$$

the first step is to derive the local formula for $dd^cf$. Recall

$$\omega_{HS}=(1+\frac{\tau}{2})p^*\omega_{FS}+d\tau\wedge\Theta,$$

where $\Theta=\varphi^{-1}d^c\tau$. We show that $\Theta$ satisfies 

\begin{equation}\label{eqn: theta partial tau vanishes}
    \Theta(\partial_{\tau})=0.
\end{equation}

It is equivalent to show $d^c\tau(\partial_{\tau})=d\tau(J\partial_{\tau})=0$. For $r=\log|z|_h$, from the momentum construction, we know $\varphi dr=d\tau$, then $d\tau(J\partial_{\tau})=\varphi dr(\varphi^{-1}J\partial_r)=0$. 

With Equation~\eqref{eqn: theta partial tau vanishes}, we pick a chart on $E$ locally, and write $\Theta=dz+\theta$, where $\theta$ is a connection $1$-form depending on $E$. Further we obtain

$$df=d_E^{\theta}f+\partial_{\tau}fd\tau+\partial_zf\Theta,$$

where $d_E^{\theta}\coloneqq d_E-\partial_zf\cdot\theta$. It is an operator in the divisor direction. Then

$$d^cf=d_E^{\theta, c}f+\partial_{\tau}f\varphi \Theta-\partial_zf\varphi^{-1}d\tau,$$

further we get 

\begin{align*}
    dd^cf=d_E^{\theta}d_E^{\theta,c}f-d_E^{\theta}(\partial_zf\varphi^{-1})\wedge d\tau+d\tau\wedge\partial_{\tau}(d_E^{\theta,c}f)+\Theta\wedge(\partial_zd_E^{\theta,c}f)+\varphi^{-1}\partial_z^2fd\tau\wedge\Theta+\partial_{\tau}f\varphi d\Theta\\
    +\partial_{\tau}(\partial_{\tau}f\varphi^{-1})d\tau\wedge\Theta+d_E^{\theta}(\partial_{\tau}f\varphi)\wedge\Theta.
\end{align*}

The second step is to compute the formula for $\Delta_{HS}$. For any $k$-th Fourier mode function $h$, define

$$\Delta_{FS}^{\theta}h\coloneqq(n-1)\frac{d_E^{\theta}d_E^{\theta,c}h\wedge p^*\omega_{E}^{n-2}}{p^*\omega_{E}^{n-1}}.$$

Pick a local chart, with coordinates $x_i,y_i$ on $E$ and coordinate $\xi$ on the fiber. If $h(z,\xi)=u(z)\xi^k$ and $\widehat h(z)=u(z)(e_0(z)^*)^k$, then locally

$$\Delta_{FS}^{\theta}h=\frac{1}{4}\sum_i\left(\frac{\partial^2 h}{\partial x_i^2}+\frac{\partial^2 h}{\partial y_i^2}\right).$$

Applying this to $h=f_{k,\lambda_0}$ gives

$$\Delta_{FS}^{\theta}f_{k,\lambda_0}=\frac{1}{4}\sum_i\left(\frac{\partial^2 f_{k,\lambda_0}}{\partial x_i^2}+\frac{\partial^2 f_{k,\lambda_0}}{\partial y_i^2}\right).$$

Applying the Bochner-Kodaira-Nakano formula to $\Delta_k$, and writing $\Delta_k^{\partial}$ for the $\partial$-Laplacian, we have $\Delta_k=\Delta_k^{\partial}+k(n-1)$, and hence $\lambda_0\ge k(n-1)$. Since $\Delta_k\widehat f_{k,\lambda_0}=\lambda_0\widehat f_{k,\lambda_0}$, we deduce that

$$\Delta_{FS}^{\theta}f_{k,\lambda_0}=\left(-\lambda_0+\frac{k(n-1)}{2}\right)f_{k,\lambda_0}.$$

For notational convenience, set

$$\lambda\coloneqq -\lambda_0+\frac{k(n-1)}{2},\qquad f_{k,\lambda}\coloneqq f_{k,\lambda_0}.$$

With this convention,

$$\Delta_{FS}^{\theta}f_{k,\lambda}=\lambda f_{k,\lambda}.$$

Note

$$d\Theta=d(d^c\tau\cdot\varphi^{-1})=\varphi^{-1}\cdot dd^c\tau+\partial_{\tau}(\varphi^{-1})d\tau\wedge d^c\tau.$$

We recall from the momentum construction that given a function $\psi(\tau)$ on the total space of line bundle, we have

$$dd^c\psi(\tau)=(\varphi\psi')(\tau)\frac{1}{2}\pi^*\omega_{FS}+\frac{1}{\varphi}(\varphi\psi')'(\tau)d\tau\wedge d^c\tau.$$

Thus 

$$dd^c\tau=\frac{1}{2}\varphi\pi^*\omega_{FS}+\varphi^{-1}\varphi_{\tau}d\tau\wedge d^c\tau.$$ 

In particular, we note $d\Theta$ does not have contribution of mixed term of fiber and base. Write $\theta=\theta^{x_i}dx_i+\theta^{y_i}dy_i$, we have

$$\partial_{\tau}(\theta^{x_i})=\partial_{\tau}(\theta^{y_i})=0.$$

From this we know $\theta$ and thus $d_E^{\theta}, d_E^{\theta, c}$ are invariant under $\tau$. From now on we abbreviate $f_{k, \lambda}$ as $f$ for simplification. Then 

$$\Delta_{\omega_{HS}}f=n\displaystyle\frac{dd^cf\wedge\omega_{HS}^{n-1}}{\omega_{HS}^n}=\displaystyle\frac{\lambda}{1+\frac{\tau}{2}}f-\varphi^{-1}k^2f+\partial_{\tau}(\varphi\partial_{\tau}f)+\frac{n-1}{2(1+\frac{\tau}{2})}\varphi\partial_{\tau}f.$$

The third step is to compute the Ricci curvature of $\omega_{HS}$. From the momentum construction, the expression for the Ricci curvature is

$$\rho=\pi^*\omega_{FS}+\frac{1}{2Q}(\varphi Q)'(\tau)\pi^*\gamma-\frac{1}{2}[\frac{1}{Q}(\varphi Q)']'(\tau)d\tau\wedge \Theta,$$

where $\gamma=-\ddb\log h$ for hermitian metric $h$ on the line bundle. In our case, plugging in $Q(\tau)=(1+\frac{\tau}{2})^{n-1}$ and $\varphi(\tau)=\frac{8}{n}\left(1+\frac{\tau}{2}-\frac{1+\frac{n}{2}\tau}{(1+\frac{\tau}{2})^{n-1}}\right)$, we obtain

$$\rho_{\omega_{HS}}=\frac{1}{(1+\frac{\tau}{2})^{n-1}}\pi^*\omega_{E}-\frac{1}{(1+\frac{\tau}{2})^n}d\tau\wedge\Theta,$$

then

$$\langle \rho_{\omega_{HS}}, dd^cf\rangle=\displaystyle\frac{1}{(1+\frac{\tau}{2})^{n+1}}(\lambda f+\frac{1}{2}\varphi f_{\tau})+\displaystyle\frac{1}{(1+\frac{\tau}{2})^n}(\varphi^{-1}k^2f-\partial_{\tau}(\varphi f_{\tau})).$$



From all above three steps, we obtain an expression of the Lichnerowicz operator acting on $f_{k,\lambda}$. We are ready to prove the desired inequality. Given $f\in W^{4,2}_{0;0}$, we show that $f_{k, \lambda}\in W^{4,2}_{0;0}$. 

Note that the above decompositions preserve the boundary condition. Let $P_{k,\lambda}$ be the projection onto the $(k,\lambda)$-component. For smooth \(\varphi\) with \(\varphi|_{\partial T_R}=0\), the projection $P_{k,\lambda}$ acts only in the variables tangent to $\partial T_R$, and hence

\[
(P_{k,\lambda}\varphi)|_{\partial T_R}=P_{k,\lambda}(\varphi|_{\partial T_R})=0.
\]

Since \(P_{k,\lambda}\) is bounded on \(W_\eta^{4,2}(T_R)\), it follows by
taking the \(W_\eta^{4,2}\)-closure that
\[
P_{k,\lambda}\bigl(W_{\eta;0}^{4,2}(T_R)\bigr)\subset W_{\eta;0}^{4,2}(T_R).
\]

Therefore each component $f_{k,\lambda}$ also lies in $W_{\eta;0}^{4,2}(T_R)$, and the integrations by parts below are justified
for each component. For the following we denote $f=f_{k, \lambda}$. 

\begin{align*}
    \displaystyle\int_{T_R\backslash E}2\mathcal{D}^*\mathcal{D} f\cdot f\;dvol
    &=\displaystyle\int(\Delta_{\omega_{HS}}f)^2\;dvol+\displaystyle\int2\langle \rho_{\omega_{HS}}, dd^cf\rangle f\;dvol\\
    &=\displaystyle\int\left(-\varphi^{-1}k^2f+\varphi_{\tau}f_{\tau}+\frac{n-1}{2(1+\frac{\tau}{2})}\varphi f_{\tau}+\varphi f_{\tau\tau}\right)^2+2f\cdot f_{\tau}\cdot\varphi_{\tau}\left(\frac{\lambda}{1+\frac{\tau}{2}}-\frac{1}{(1+\frac{\tau}{2})^n}\right)\\
    &+\displaystyle\int f\cdot f_{\tau}\cdot\varphi\left(\frac{(n-1)\lambda}{(1+\frac{\tau}{2})^2}+\frac{1}{(1+\frac{\tau}{2})^{n+1}}\right)+\displaystyle\int2f\cdot f_{\tau\tau}\cdot\varphi\left(\frac{\lambda}{1+\frac{\tau}{2}}-\frac{1}{(1+\frac{\tau}{2})^n}\right)\\
    &+\left(2\varphi^{-1}k^2\left(\frac{1}{(1+\frac{\tau}{2})^n}-\frac{\lambda}{1+\frac{\tau}{2}}\right)+\frac{2\lambda}{(1+\frac{\tau}{2})^{n+1}}+\frac{\lambda^2}{(1+\frac{\tau}{2})^2}\right)f^2\\
    &=\displaystyle\int\left(-\varphi^{-1}k^2f+\varphi_{\tau}f_{\tau}+\frac{n-1}{2(1+\frac{\tau}{2})}\varphi f_{\tau}+\varphi f_{\tau\tau}\right)^2+f\cdot f_{\tau}\cdot \varphi\left(\frac{n\lambda}{(1+\frac{\tau}{2})^2}-\frac{n-1}{(1+\frac{\tau}{2})^{n+1}}\right)\\
    &+\displaystyle\int\left(2\varphi^{-1}k^2\left(\frac{1}{(1+\frac{\tau}{2})^n}-\frac{\lambda}{1+\frac{\tau}{2}}\right)+\frac{2\lambda}{(1+\frac{\tau}{2})^{n+1}}+\frac{\lambda^2}{(1+\frac{\tau}{2})^2}\right)f^2+\left(\frac{2\varphi}{(1+\frac{\tau}{2})^n}-\frac{2\lambda\varphi}{1+\frac{\tau}{2}}\right) f_{\tau}^2\\
    &\ge \displaystyle\int C \tau^{-2}f^2\ge\displaystyle\int C' e^{2t}f^2\;dvol
\end{align*}

Applying Cauchy--Schwartz inequality to the above, we obtain the desired inequality for $f_{k, \lambda}$. Using the orthogonality of the eigenfunctions corresponding to different eigenvalues, we obtain the inequality for $f$.

\end{proof}

We are now ready to state the main result of this section.

\begin{thm}\label{theorem bounded inverse for cusp}
There exists a constant $\kappa>0$ such that for $\eta\in(0, \kappa)$ and $\delta\in(0,1)$, the operator

$$G_{\omega_{HS}}: C^{4,\alpha}_{\eta, \delta}(Bl_0\mathbb{C}^n\backslash E)\oplus\ker\mathcal{D}^*_E\mathcal{D}_E\oplus\mathbb{R}\rightarrow \widetilde{C}^{0,\alpha}_{\eta, \delta-4}(Bl_0\mathbb{C}^n\backslash E)$$

given by

$$(\phi, f, \lambda)\mapsto\mathcal{D}^*_{\omega_{HS}}\mathcal{D}_{\omega_{HS}}(\phi+\psi_f+t\lambda)$$

has a bounded right inverse.
\end{thm}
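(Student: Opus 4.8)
The plan is to build the right inverse out of two pieces: the genuine cokernel directions, which are already covered by the second factor $\widetilde{\ker\mathcal{D}^*_E\mathcal{D}_E}$, and the one extra direction $\chi\pi^*\langle 1\rangle$ appearing in the enlarged target, which will be supplied by the $\R$-factor through the function $t$. Throughout I abbreviate $L\coloneqq\mathcal{D}^*_{\omega_{HS}}\mathcal{D}_{\omega_{HS}}$ and write $\psi_1\coloneqq\chi\pi^*1$, so that the modified target is $\widetilde{C}^{0,\alpha}_{\eta,\delta-4}=C^{0,\alpha}_{\eta,\delta-4}\oplus\langle\psi_1\rangle$. Note $\psi_1\notin C^{0,\alpha}_{\eta,\delta-4}$ for $\eta>0$, since $\psi_1\equiv1$ near the cusp does not decay, so this sum is a genuine topological direct sum with a one-dimensional second summand.

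First I would isolate the map
$$\widetilde{L}:C^{4,\alpha}_{\eta,\delta}\times\widetilde{\ker\mathcal{D}^*_E\mathcal{D}_E}\longrightarrow C^{0,\alpha}_{\eta,\delta-4},\qquad (\phi,f)\mapsto L(\phi+\psi_f),$$
and show it is a bounded linear bijection. It is bounded because $L$ is a bounded order-four operator on the weighted spaces and $f\mapsto L\psi_f$ is a linear map out of a finite-dimensional space. Surjectivity is exactly the direct-sum decomposition of $C^{0,\alpha}_{\eta,\delta-4}$ established in Theorem~\ref{linear theorem for the cusp}, since $L\psi_f$ ranges over $\langle L\psi_i:i=2,\dots,r\rangle$ as $f$ ranges over $\widetilde{\ker\mathcal{D}^*_E\mathcal{D}_E}$. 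For injectivity, if $\widetilde{L}(\phi,f)=0$ then $L\phi=-L\psi_f$ lies in $\mathrm{Im}\,L\cap\langle L\psi_i\rangle=\{0\}$ by directness; hence $L\phi=0$ forces $\phi=0$ (the kernel of $L$ on $C^{4,\alpha}_{\eta,\delta}$ is trivial for $\eta\in(0,\kappa)$ by Theorem~\ref{linear theorem for the cusp}), and then $L\psi_f=0$ forces $f=0$ by the linear independence of the $L\psi_i$. Being a bounded linear bijection of Banach spaces, $\widetilde{L}$ has a bounded inverse by the open mapping theorem.

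The remaining point is to recover the direction $\psi_1$ that was removed by restricting to $\widetilde{\ker\mathcal{D}^*_E\mathcal{D}_E}$ (i.e. by discarding $f_1=1$). I would compute $L(t)$ near the cusp using the model operator: since $t$ depends only on the cusp coordinate, $\Delta_E t=\mathcal{D}^*_E\mathcal{D}_E t=0$ and $(\partial_t^2-\partial_t)t=-1$, so $\mathcal{D}^*_{mod}\mathcal{D}_{mod}(t)=-\tfrac12(\partial_t^2-\partial_t)t=\tfrac12$. Because $L-L_{mod}=O(e^{-t})$ at every order (equation \ref{comparable operator}) and neither operator has a zeroth-order term, only the bounded derivatives of $t$ enter, so $L(t)-L_{mod}(t)=O(e^{-t})$ near the cusp; extending $t$ to be constant away from a neighborhood of $E$ makes $L(t)$ compactly supported there. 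Since near the cusp $\psi_1\equiv1$, it follows that with $\eta<1$
$$w\coloneqq L(t)-\tfrac12\psi_1\in C^{0,\alpha}_{\eta,\delta-4},$$
so $L(t)=\tfrac12\psi_1+w$ has a nonzero $\psi_1$-component modulo $C^{0,\alpha}_{\eta,\delta-4}$.

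Finally I would assemble the right inverse. Given a target element, decompose it uniquely as $u+c\psi_1$ with $u\in C^{0,\alpha}_{\eta,\delta-4}$ and $c\in\R$; both projections are bounded. Set $\lambda\coloneqq 2c$ and $(\phi,f)\coloneqq\widetilde{L}^{-1}(u-2c\,w)$, so that
$$G_{\omega_{HS}}(\phi,f,\lambda)=\widetilde{L}(\phi,f)+\lambda L(t)=(u-2c\,w)+2c\bigl(\tfrac12\psi_1+w\bigr)=u+c\psi_1.$$
Thus $(u+c\psi_1)\mapsto(\phi,f,\lambda)$ is a right inverse, and it is bounded as a composition of the bounded maps $u+c\psi_1\mapsto(u,c)$, $\widetilde{L}^{-1}$, and scalar multiplication. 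The main subtlety is precisely the cusp computation $L(t)=\tfrac12\psi_1+O(e^{-t})$: the nonvanishing of the $\psi_1$-component there is exactly what lets the single real parameter $\lambda$ account for the extra dimension of $\widetilde{C}^{0,\alpha}_{\eta,\delta-4}$, balancing the removal of $f_1=1$ from the second factor and matching the index bookkeeping of Theorem~\ref{theorem index}.
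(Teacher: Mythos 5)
Your proposal is correct and takes essentially the same route as the paper: the crux in both is that applying $\mathcal{D}^*_{\omega_{HS}}\mathcal{D}_{\omega_{HS}}$ to the cut-off function $t\chi\pi^*1$ produces a nonzero constant multiple of $\chi\pi^*1$ plus an $O(e^{-t})$ error, so that subtracting an appropriate multiple of $\mathcal{D}^*_{\omega_{HS}}\mathcal{D}_{\omega_{HS}}(\lambda t\chi\pi^*1)$ reduces surjectivity onto $\widetilde{C}^{0,\alpha}_{\eta,\delta-4}$ to the decomposition of Theorem~\ref{linear theorem for the cusp}. You are somewhat more explicit than the paper in two harmless respects: you prove bijectivity of the map $(\phi,f)\mapsto\mathcal{D}^*_{\omega_{HS}}\mathcal{D}_{\omega_{HS}}(\phi+\psi_f)$ and invoke the open mapping theorem to justify boundedness of the right inverse, and your constant $\tfrac12$ (rather than the $1$ appearing in the paper's display) is what the stated formula for $\mathcal{D}^*_{mod}\mathcal{D}_{mod}$ actually yields, a discrepancy that is immaterial since $\lambda$ is a free parameter.
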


\begin{proof}
Consider a cut-off function $\gamma_1$ supported on a tubular neighborhood of radius $R$ near E, denoted by $T_R\backslash E$, and let $\nabla\gamma_1$ supported on $T_R\backslash T_{\frac{R}{2}}$ with $\|\gamma_1\|_{C^{4, \al}}\le c$. Define $\gamma_2\coloneqq 1-\gamma_1$ satisfying $\|\gamma_2\|_{C^{4, \al}}\le c$. Consider $\varphi\in \tilde{C}^{0, \al}_{\eta, \delta-4}$, then $\gamma_1\varphi$ is viewed as a function on $T_R\backslash E$ and $\gamma_2\varphi$ is viewed as a function on $Bl_0\C^n\backslash Bl_0B_R$.

\medskip

Given $\gamma_2\varphi$ supported on a AE manifold $\bl_0\C^n\backslash\bl_0B_R$, under the weight $\delta\in(0, 1)$, in order to show the map $\mathcal{D}^*\mathcal{D}: C^{4, \al}_{\delta}\rightarrow C^{0, \al}_{\delta-4}$ is surjective, it suffices to show its adjoint operator $\mathcal{D}^*\mathcal{D}: C^{4, \al}_{4-2n-\delta}\rightarrow C^{0, \al}_{-2n-\delta}$ is injective. To find the kernel $\mathcal{D}^*\mathcal{D}f=0$, we first improve the decay rate of the solution at infinity. Note $4-2n-\delta<0$, we use standard biharmonic analysis for Euclidean space and obtain $f\in C^{4, \al}_{4-2n}$ for $n\ge 3$, thus integration by parts is justified, which implies $\mathcal{D}f=0$. This gives rise to a holomorphic vector field and thus holomorphic function on the complement of $\C^n$ of a ball. Then Hartogs's extension theorem implies the holomorphic function is extended to all of $\C^n$, and hence is a constant. Note again $4-2n-\delta<0$, we know the constant is zero, thus we conclude the triviality of the kernel. Then there exists $P_2(\gamma_2\varphi)$ such that $\mathcal{D}^*\mathcal{D}(P_2(\gamma_2\varphi))=\gamma_2\varphi$ and satisfies

$$\|P_2(\gamma_2\varphi)\|_{C^{4, \al}_{\delta}}\lesssim\|\gamma_2\varphi\|_{C^{0, \al}_{\delta-4}}\lesssim\|\varphi\|_{\tilde{C}^{0, \al}_{\eta, \delta-4}}.$$

Given $\gamma_1\varphi$ supported on $T_R\backslash E$, satisfying $\gamma_1\varphi|_{\partial T_R}=0$. For a function $f\in C^{k, \al}(T_R\backslash E)$, consider the orthogonal decomposition $f=f^0+f^{\ne0}$, where $f^0$ is invariant under the holomorphic $S^1$ action on $(Bl_0\C^n\backslash E, \omega_{HS})$, and $f^{\ne0}$ has zero mean on each fiber. Note $\omega_{HS}$ is $S^1$-invariant, and thus the associated Lichnerowicz operator maps $S^1$-invariant part to $S^1$-invariant and zero mean part to zero mean part. To solve $L_{\omega_{HS}}\phi=\gamma_1\varphi$ on $T_R\backslash E$, we solve these two parts separately.

\medskip

For $(\gamma_1\varphi)^0$, we view it as a function on $T_R/S^1=[0,\infty)\times E$. The quotient metric is of the form $dt^2+\omega_{FS}+O(e^{-t})$, with volume form mutually bounded by $e^{-t}dt\,dvol_E$, where $t=\log(-\log|z|_h^2)+O(e^{-t})$. The Lichnerowicz operator has the expression
$$L_{\omega_{HS}}=\mathcal D^*_{\omega_{HS}}\mathcal D_{\omega_{HS}}=\frac{1}{2}(\partial_z-\partial_z^2)^2+\mathcal{D}^*_E\mathcal{D}_E+\Delta_{FS}(\partial_z-\partial_z^2)+(\partial_z-\partial_z^2)+O(e^{-t}).$$
The solvability follows from Lemma~\ref{lemma S1 invariant inverse}.

It remains to deal with the function $(\gamma_1\varphi)^{\ne0}$.

We construct the inverse of $(\gamma_1\varphi)^{\ne0}$ under the operator $\mathcal{D}^*\mathcal{D}$ by first doing cut-off so that it supports in a compact annulus, then we find its inverse under $\mathcal{D}^*\mathcal{D}$ on each annulus, and finally we glue them together using partition of unity. This strategy is first explored by Biquard in \cite{biquard1997fibres}. The inequality (\ref{essential inequality}) proved in Lemma~\ref{lemma essential inequality} is used to improve the regularity of the inverse. 

\medskip

Let $(\zeta_A)_{A\in\mathbb{Z}_+}$ be a partition of unity to the intervals $(A-1, A+1)$ in coordinate $t$. Let $U_A$ be the annulus whose coordinate $t$ lives in $(A-1, A+1)$. From (\ref{essential inequality}), we know the map 

$$\mathcal{D}^*\mathcal{D}: W^{4,2}_0(U_A)\subset L^2(U_A)\rightarrow L^2(U_A)$$

is injective. Here, $W^{4,2}_0(U_A)$ is the closure of $C^{\infty}_c(U_A)$ in $W^{4,2}(U_A)$. Note the operator above is densely defined, closed, and self-adjoint, then from \cite{brezis2011functional} Theorem 2.19, we know $\Ima\mathcal{D}^*\mathcal{D}$ being closed is equivalent to $\Ima\mathcal{D}^*\mathcal{D}=(\ker\mathcal{D}^*\mathcal{D})^{\perp}$. 

We first show the following elliptic a priori estimates

$$\|f\|_{W^{4,2}(U_A)}\lesssim \|\mathcal{D}^*\mathcal{D}f\|_{L^2(U_A)}+\|f\|_{L^2(U_A)}, \quad f\in W^{2,2}_0.$$

We consider $\widetilde U_A=\{A-2<t<A+2\}$. For $f\in C_c^\infty(U_A)$, extend $f$ by zero to $\widetilde U_A$. By the standard interior elliptic estimate for the fourth-order elliptic operator $\mathcal{D}^*\mathcal{D}$, applied to the fixed inclusion $U_A\subset\subset\widetilde U_A$, there is a constant $C$, independent of $f$, such that

$$\|f\|_{W^{4,2}(U_A)}\leq C\left(\|\mathcal{D}^*\mathcal{D} f\|_{L^2(\widetilde U_A)}+\|f\|_{L^2(\widetilde U_A)}\right).$$

Since the zero extension vanishes on $\widetilde U_A\setminus U_A$, this becomes

$$\|f\|_{W^{4,2}(U_A)}\leq C\left(\|\mathcal{D}^*\mathcal{D} f\|_{L^2(U_A)}+\|f\|_{L^2(U_A)}\right).$$

Passing to the closure of $C_c^\infty(U_A)$ in $W^{4,2}(U_A)$, and using the continuity of $\mathcal{D}^*\mathcal{D}:W^{4,2}(U_A)\to L^2(U_A)$, we obtain the same estimate for $f\in W^{4,2}_0(U_A)$. In fact, with same argument as in Theorem~\ref{Schauder estimates}, we see this constant $C$ is also independent from $A$.

Then from standard arguments we know $\Ima\mathcal{D}^*\mathcal{D}$ is closed, see \cite{xuzheng24} Lemma 4.3, \cite{Pac08} Theorem 9.2.1. Hence the map is surjective, for any $B\in \Z_+$, we use $(\mathcal{D}^*\mathcal{D})^{-1}(\xi_Bg)$ to denote the inverse. On the annulus $U_A$, let $1_A$ denote the function on it which is constantly equal to $1$. We consider the following weighted estimate 

\begin{align*}
    \displaystyle\int_{U_A}|\mathcal{D}^*\mathcal{D}f|^2e^{(2\eta-3) t}&\ge min\{e^{(2\eta-3)(A-1)},e^{(2\eta-3)(A+1)}\}\displaystyle\int_{U_A} |\mathcal{D}^*\mathcal{D}f|^2\\
    &= e^{-2|2\eta-3|}max\{e^{(2\eta-3)(A-1)},e^{(2\eta-3)(A+1)}\}\displaystyle\int_{U_A} |\mathcal{D}^*\mathcal{D}f|^2\\
    &\ge e^{-2|2\eta-3|}max\{e^{(2\eta-3)(A-1)},e^{(2\eta-3)(A+1)}\}\displaystyle\int_{U_A}|f|^2e^{4t}\\
    &\ge e^{-2|2\eta-3|}\displaystyle\int_{U_A}|f|^2e^{(2\eta+1)t}.
\end{align*}

Note that $e^{-2|2\eta-3|}$ is a constant independent of $A$, then any $A, B\in\Z_+$, applying the above inequality, we get

\begin{align*}
\|\zeta_A(\mathcal{D}^*\mathcal{D})^{-1}(\zeta_Bg)\|_{L^2_{\eta+\frac{1}{2}}}&\le e^{(\eta-\eta_1)A}\|\zeta_A(\mathcal{D}^*\mathcal{D})^{-1}(\zeta_Bg)\|_{L^2_{\eta+\frac{1}{2}}}\\
&\lesssim e^{(\eta-\eta_1)A}\|1_A\cdot(\mathcal{D}^*\mathcal{D})^{-1}(\zeta_Bg)\|_{L^2_{\eta+\frac{1}{2}}}\\
&\lesssim e^{(\eta-\eta_1)A}\|\zeta_Bg\|_{L^2_{\eta_1-\frac{3}{2}}}\\
&\lesssim e^{(\eta-\eta_1)(A-B)}\|\zeta_Bg\|_{L^2_{\eta_1-\frac{3}{2}}}
\end{align*}

For each $A$, choose $\eta_1$ such that $\eta-\eta_1=-\epsilon\cdot sgn(A-B)$, then the above expression becomes 

$$\|\zeta_A(\mathcal{D}^*\mathcal{D})^{-1}(\zeta_Bg)\|_{L^2_{\eta+\frac{1}{2}}}\lesssim e^{-\epsilon|A-B|}\|\zeta_Bg\|_{L^2_{\eta_1-\frac{3}{2}}}$$

Combining the above, we have the following estimate

\begin{align*}
\|\displaystyle\sum_A\displaystyle\sum_B\zeta_A(\mathcal{D}^*\mathcal{D})^{-1}(\zeta_Bg)\|_{L^2_{\eta+\frac{1}{2}}}
&\le\left(\displaystyle\sum_A\|\displaystyle\sum_B\zeta_A(\mathcal{D}^*\mathcal{D})^{-1}(\zeta_Bg)\|^2_{L^2_{\eta+\frac{1}{2}}}\right)^{\frac{1}{2}}\\
&\le\left(\displaystyle\sum_A\left(\displaystyle\sum_B\|\zeta_A(\mathcal{D}^*\mathcal{D})^{-1}(\zeta_Bg)\|_{L^2_{\eta+\frac{1}{2}}}\right)^2\right)^{\frac{1}{2}}\\
&\lesssim \left(\displaystyle\sum_A\left(\displaystyle\sum_B e^{-\ep|A-B|}\|\zeta_Bg\|_{L^2_{\eta-\frac{3}{2}}}\right)^2\right)^{\frac{1}{2}}\\
&\lesssim \left(\displaystyle\sum_B\|\zeta_Bg\|^2_{L^2_{\eta-\frac{3}{2}}}\right)^{\frac{1}{2}}\le \|g\|_{L^2_{\eta-\frac{3}{2}}}\lesssim\|g\|_{C^{0, \al}_{\eta}}
\end{align*}

From elliptic regularity Theorem~\ref{Schauder estimates}, we conclude $\displaystyle\sum_A\displaystyle\sum_B\zeta_A(\mathcal{D}^*\mathcal{D})^{-1}(\zeta_Bg)\in C^{4, \al}_{\eta}$. Combining this construction with Lemma~\ref{lemma S1 invariant inverse}, we obtain $P_1(\gamma_1\varphi)$ satisfying $G_{\omega_{HS}}(P_1(\gamma_1\varphi))=\gamma_1\varphi$ and

$$\|P_1(\gamma_1\varphi)\|_{C^{4,\al}_{\eta}\times\ker\mathcal{D}_E^*\mathcal{D}_E\times\R}\lesssim\|\varphi\|_{\tilde{C}^{0,\al}_{\eta,\delta-4}}$$

Consider for $i=1,2$ the cutoff functions $\beta_i$ with $\beta_i=1$ on the support of $\gamma_i$, and $\nabla\beta_1$ supports on $T_{2R}\backslash T_R$, $\nabla\beta_2$ supports on $T_{\frac{R}{2}}\backslash T_{\frac{R}{4}}$, further, $\|\nabla\beta_1\|_{C^{3,\al}}\le\frac{C}{\log R}$ and $\|\nabla\beta_2\|_{C^{3, \al}}\le\frac{C}{\log R}$. This can be achieved by defining $\beta_1\coloneqq\beta\left(\frac{|z|}{\log R}\right)$ where $\beta(r)=1$ for $r<\frac{R}{\log R}$ and $\beta(r)=0$ for $r>\frac{2R}{\log R}$; $\beta_2\coloneqq\tilde{\beta}\left(\frac{|z|}{\log R}\right)$ where $\tilde{\beta}(r)=1$ for $r>\frac{R}{2\log R}$ and $\tilde{\beta}=0$ for $r<\frac{R}{4\log R}$. Define 

$$P(\varphi)=\beta_1P_1(\gamma_1\varphi)+\beta_2P_2(\gamma_2\varphi)$$

First we show that $\varphi\mapsto P(\varphi)$ gives an approximate inverse, i.e., 

\begin{equation}\label{inverse bound inequality}
    \|G_{\omega_{HS}}(P(\varphi))-\varphi\|_{\tilde{C}^{0,\al}_{\eta, \delta-4}}\le\frac{1}{2}\|\varphi\|_{\tilde{C}^{0,\al}_{\eta, \delta-4}}.
\end{equation}

It reduces to show 

$$\|G_{\omega_{HS}}(\beta_iP_i(\gamma_i\varphi))-\gamma_i\varphi\|\le\frac{1}{4}\|\varphi\|.$$

Write 

$$G_{\omega_{HS}}(\beta_iP_i(\gamma_i\varphi))-\gamma_i\varphi=\beta_i\gamma_i\varphi-\gamma_i\varphi+D^3(\nabla\beta_i\star P_i(\gamma_i\varphi)),$$

where $D^3$ denotes a third-order differential operator, and $\star$ a bilinear operator. Note 

$$\|D^3(\nabla\beta_1\star P_1(\gamma_1\varphi))\|_{C^{0, \al}_{\eta}}\lesssim\|\nabla\beta_1\|_{C^{3,\al}}\|P_1(\gamma_1\varphi)\|_{C^{3,\al}_{\eta}}=o(1)\|\varphi\|_{C^{0, \al}_{\eta}},$$

$$\|D^3(\nabla\beta_2\star P_2(\gamma_2\varphi))\|_{C^{0, \al}_{\delta-4}}\lesssim\|\nabla\beta_2\|_{C^{3,\al}_{-1}}\|P_2(\gamma_2\varphi)\|_{C^{3,\al}_{\delta}}=o(1)\|\varphi\|_{C^{0, \al}_{\delta-4}}.$$

Here $\nabla\beta_2$ is supported on $T_{\frac{R}{2}}\backslash T_{\frac{R}{4}}$, and $\|\nabla\beta_2\|_{C^{3,\al}_{-1}}\le\frac{C}{\log R}$, thus proving (\ref{inverse bound inequality}). From (\ref{inverse bound inequality}), we see there is a uniformly bounded inverse $(G_{\omega_{HS}}\circ P)^{-1}$, thus $G_{\omega_{HS}}$ has a right inverse $P\circ(G_{\omega_{HS}}\circ P)^{-1}$.

\end{proof}

\section{Existence of solution}\label{section 8}

To obtain a cscK metric on $Bl_pX\backslash E$, we want to solve a boundary value problem for the cscK equation on the two pieces $X_{r_{\epsilon}}$ and $Bl_0 B_{R_{\ep}}\backslash E$. The arguments for the piece $X_{r_{\epsilon}}$ are the same as Arezzo-Pacard's case. We can directly take from \cite{arezzo2006blowing}. 

\begin{thm}(\cite{arezzo2006blowing}, Proposition 6.1)\label{theorem cscK on punctured base}
    Fix a constant $\Lambda>0$ large enough, there exist $c_{\Lambda}>0$, $\epsilon_{\Lambda}>0$ such that $\forall\epsilon\in(0, \epsilon_{\Lambda})$, and all boundary data $h\in C^{4, \al}(\partial B_1), k\in C^{2, \al}(\partial B_1)$ satisfying

    \begin{equation}\label{conditions for h,k}
        \|h\|_{C^{4, \al}(\partial B_1)}\le \Lambda r_{\epsilon}^4, \quad \|k\|_{C^{2, \al}(\partial B_1)}\le\Lambda r_{\epsilon}^4,
    \end{equation}

    there exists $\varphi_{\epsilon, h,k}$ such that the K\"ahler form $\omega_{\epsilon, h,k}$ defined by Equation~\eqref{metric on base piece} on $X_{r_{\epsilon}}$ has constant scalar curvature $s(\omega_X)+\nu_{\epsilon,h,k}$. If we use $\hat{\varphi}$ to denote the component in $\varphi$ which is defined on $X_p$, then 

    $$\|\hat{\varphi}_{\ep, h,k}|_{B_{2r_{\ep}}\backslash B_{r_{\ep}}}(r_{\ep}\cdot)-H^0_{h,k}\|_{C^{4, \al}(B_2\backslash B_1)}\le c_{\Lambda}r_{\ep}^{2n+\dt},\quad |\nu_{\epsilon, h,k}|\le c_{\Lambda}r_{\ep}^{2n}.$$

    Moreover,

    \begin{equation}\label{nonlinear estimate for base 01}
        \|\hat{\varphi}_{\ep, h,k}-\hat{\varphi}_{\ep, h',k'}|_{B_{2r_{\ep}}\backslash B_{r_{\ep}}}(r_{\ep}\cdot)-H^0_{h,k}\|_{C^{4, \al}(B_2\backslash B_1)}\le c_{\Lambda}r_{\ep}^{2n-4+\dt}\|h-h', k-k'\|,
    \end{equation}

    \begin{equation}\label{nonlinear estimate for base 02}
        |\nu_{\epsilon, h,k}-\nu_{\epsilon, h',k'}|\le c_{\Lambda}r_{\ep}^{2n-4}\|h-h', k-k'\|.
    \end{equation}
    
\end{thm}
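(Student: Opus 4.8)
The plan is to solve the cscK equation on the compact region $X_{r_\epsilon}$ as a perturbation of the background cscK metric $\omega_X$, with the data $h,k$ imposed as Navier-type conditions ($\varphi=h$, $\Delta\varphi=k$ on $\partial B_{r_\epsilon}$, after rescaling to $\partial B_1$). Since the scheme is that of Arezzo-Pacard, I would first use the biharmonic-extension lemma (\cite{AP09}, Propositions 4.3 and 4.4) to absorb the inhomogeneous boundary data: the cut-off profile $H^0_{\epsilon,h,k}$ realizes $(h,k)$ on $\partial B_{r_\epsilon}$ and, because $\omega_X$ is asymptotically Euclidean near $p$ where $\mathcal{D}^*_{\omega_X}\mathcal{D}_{\omega_X}$ is close to $\tfrac{1}{2}\Delta^2_{Euc}$, produces only a small scalar-curvature error. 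Writing $\omega_{\epsilon,h,k}=\omega_X+\sqrt{-1}\partial\bar{\partial}(H^0_{\epsilon,h,k}+\varphi)$ and expanding the scalar curvature around $\omega_X$ gives
$$S(\omega_{\epsilon,h,k})-s(\omega_X)=-\mathcal{D}^*_{\omega_X}\mathcal{D}_{\omega_X}\varphi+\mathcal{E}_{\epsilon,h,k}+Q_{\epsilon,h,k}(\varphi),$$
where $\mathcal{E}_{\epsilon,h,k}$ collects the fixed error generated by $-\mathcal{D}^*_{\omega_X}\mathcal{D}_{\omega_X}H^0_{\epsilon,h,k}$ and the cut-off, and $Q_{\epsilon,h,k}$ gathers the quadratic and higher terms; the gradient term in $L_{\omega_X}$ drops out because $S(\omega_X)$ is constant.

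The decisive linear step uses the hypothesis that $X$ carries no non-trivial holomorphic vector field: on the closed manifold $\ker\mathcal{D}_{\omega_X}$ consists only of constants, so the constants form the (approximate) one-dimensional cokernel of $\mathcal{D}^*_{\omega_X}\mathcal{D}_{\omega_X}$ in the relevant weight range. I would therefore construct, for all small $\epsilon$, a right inverse $G_\epsilon$ for $-\mathcal{D}^*_{\omega_X}\mathcal{D}_{\omega_X}$ with homogeneous Navier data, working in weighted H\"older norms adapted to the shrinking hole with weight $\delta\in(0,1)$ (so that $\delta$ avoids the integer indicial roots of $\Delta^2_{Euc}$), and with operator norm bounded \emph{uniformly in} $\epsilon$. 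Letting the target constant scalar curvature float by $\nu_{\epsilon,h,k}$ is exactly the device that absorbs the constant cokernel: one projects the right-hand side off the constants and sets $\nu_{\epsilon,h,k}$ equal to the projected average.

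With $G_\epsilon$ in hand I would close the nonlinear problem by a Banach fixed-point argument for $\varphi\mapsto G_\epsilon(-\mathcal{E}_{\epsilon,h,k}-Q_{\epsilon,h,k}(\varphi))$ on a small ball in the weighted space, reading off $\nu_{\epsilon,h,k}$ from the constant component. The quantitative estimates then follow by tracking norms: under the bounds (\ref{conditions for h,k}) the leading error $\mathcal{E}_{\epsilon,h,k}$ has size $O(r_\epsilon^{2n})$ once the weighted norms are taken into account, which yields $|\nu_{\epsilon,h,k}|\le c_\Lambda r_\epsilon^{2n}$ and, after restricting to the neck $B_{2r_\epsilon}\setminus B_{r_\epsilon}$ and rescaling, the matching bound $r_\epsilon^{2n+\delta}$; the Lipschitz-in-$(h,k)$ estimates (\ref{nonlinear estimate for base 01}) and (\ref{nonlinear estimate for base 02}) come from differentiating the fixed point in the boundary data and combining the uniform bound on $G_\epsilon$ with the Schauder control on $H^0_{h,k}$ from the biharmonic-extension lemma.

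The main obstacle I anticipate is precisely the \emph{uniform-in-$\epsilon$} bound on $G_\epsilon$: as $r_\epsilon\to 0$ the domain degenerates, and one must rule out eigenvalues of $\mathcal{D}^*_{\omega_X}\mathcal{D}_{\omega_X}$ (with the prescribed boundary conditions) collapsing. This is controlled by a blow-up dichotomy. A hypothetical sequence of unit-norm elements with $\|G_\epsilon\|\to\infty$ would satisfy $\|\mathcal{D}^*_{\omega_X}\mathcal{D}_{\omega_X}\varphi_\epsilon\|\to 0$; if the mass stays bounded away from $p$, elliptic estimates force convergence to a bounded element of $\ker\mathcal{D}^*_{\omega_X}\mathcal{D}_{\omega_X}$ on $X$ (excluded, being a nonconstant holomorphy potential), while if the mass concentrates near $p$, a rescaling produces a biharmonic function on $\mathbb{C}^n\setminus\{0\}$ of forbidden homogeneity $\delta\in(0,1)$ (excluded because $\delta$ avoids the integer indicial roots of $\Delta^2_{Euc}$). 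Keeping all constants independent of $\epsilon$ through this dichotomy is the delicate point, and it is the analytic heart that later permits the neck-matching of Section~\ref{section 8}.
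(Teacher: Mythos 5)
The paper offers no proof of this statement at all: it is imported verbatim from Arezzo--Pacard (\cite{arezzo2006blowing}, Proposition 6.1), with Remark~\ref{remark Futaki invariant} only noting that the harmonic $(1,1)$-form term $\beta_{\epsilon,h,k}$ appearing there can be dropped because $X$ has no non-trivial holomorphic vector fields. Your outline is a faithful reconstruction of exactly that Arezzo--Pacard argument (biharmonic extension of the boundary data, linearization with the gradient term vanishing since $\omega_X$ is cscK, trivial kernel of $\mathcal{D}_{\omega_X}$ from the no-vector-field hypothesis, a right inverse bounded uniformly in $\epsilon$ on weighted spaces with $\delta\in(0,1)$, the floating constant $\nu_{\epsilon,h,k}$ absorbing the constant cokernel, and a contraction mapping), so it matches the proof the paper relies on.
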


The main difference lies in the piece of $Bl_0\C^n\backslash E$, which we discuss as follows.

\begin{thm} \label{theorem modified cscK}
   Fix a constant $\Lambda>0$ large enough, there exist $c>0$ independent of $\Lambda$ and $\epsilon_{\Lambda}>0$ such that $\forall\ep\in(0, \ep_{\Lambda})$, and for all $\nu\in\R$ and boundary data $\tilde{h}\in C^{4, \al}(\partial B_1), \tilde{k}\in C^{2, \al}(\partial B_1)$ satisfying 

   \begin{equation}\label{conditions for tilde h,k}
       |\nu|\le |s(\omega_X)|+1, \quad \|\tilde{h}\|_{C^{4, \al}(\partial B_1)}\le \Lambda R_{\ep}^{3-2n}, \quad \|\tilde{k}\|_{C^{2, \al}(\partial B_1)}\le \Lambda R_{\ep}^{3-2n},
   \end{equation} 

   there exist $\phi_{\ep}\in C^{4, \al}_{\eta, \dt}$, constant $\lambda_{\ep}\in\R$ and $f_{\ep}\in\ker\mathcal{D}^*_E\mathcal{D}_E$ such that the scalar curvature of the K\"ahler form
   
   $$\omega_{\epsilon, \tilde{h},\tilde{k}, HS}=\omega_{HS}+\displaystyle\sqrt{-1}\partial\bar{\partial}(H^1_{\ep, \tilde{h}, \tilde{k}}+\phi_{\ep}+\lambda_{\ep}t)$$
   
   is 
   
      $$\ep^2\nu+\mathcal D^*_{\omega_{HS}}\mathcal D_{\omega_{HS}}(\psi_{f_{\ep}})+\frac{1}{2}\langle \nabla\mathcal D^*_{\omega_{HS}}\mathcal D_{\omega_{HS}}(\psi_{f_{\ep}}), \nabla(H^1_{\ep, \tilde{h}, \tilde{k}}+\phi_{\ep}+\lambda_{\ep} t)\rangle.$$
   
   Further, 

   $$\|\phi_{\ep}(R_{\ep}\cdot)\|_{C^{4, \al}(B_2\backslash B_1)}\le cR_{\ep}^{3-2n}, \quad |\lambda_{\ep}|+\|f_{\ep}\|\le cR_{\ep}^{3-2n-\dt}.$$

   If $\phi_{\ep}, \lambda_{\ep}$ are determined by boundary data $\tilde{h}, \tilde{k}$ and $\phi'_{\ep}, \lambda'_{\ep}$ are determined by boundary data $\tilde{h}', \tilde{k}'$, then

   \begin{equation}\label{equation n ge 3 nonlinear estimate}
       \|f_{\ep}-f_{\ep}'\|+|\lambda_{\ep}-\lambda'_{\ep}|+\|\phi_{\ep}(R_{\ep}\cdot)-\phi_{\ep}'(R_{\ep}\cdot)\|\le c_{\Lambda}(R_{\ep}^{\dt-1}\|(\tilde{h}-\tilde{h}',\tilde{k}-\tilde{k}')\|+R_{\ep}^{3-2n}|\nu-\nu'|);
   \end{equation}
\end{thm}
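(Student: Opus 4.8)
The plan is a contraction-mapping argument anchored on the bounded right inverse of Theorem~\ref{theorem bounded inverse for cusp}. Write $u\coloneqq H^1_{\ep,\tilde h,\tilde k}+\phi_\ep+\lambda_\ep t$ for the potential and expand the scalar curvature about the scalar-flat metric $\omega_{HS}$ as
$$S(\omega_{HS}+\sqrt{-1}\partial\bar\partial u)=-\mathcal D^*_{\omega_{HS}}\mathcal D_{\omega_{HS}}(u)+Q_{\omega_{HS}}(u),$$
where $Q_{\omega_{HS}}$ gathers the terms at least quadratic in the derivatives of $u$; since $S(\omega_{HS})=0$, the zeroth-order and first-order gradient contributions in the linearization $L_{\omega_{HS}}$ drop out. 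Setting this equal to the prescribed right-hand side and moving the cokernel term to the left, the three true unknowns reassemble precisely into $G_{\omega_{HS}}(\phi_\ep,f_\ep,\lambda_\ep)=\mathcal D^*_{\omega_{HS}}\mathcal D_{\omega_{HS}}(\phi_\ep+\psi_{f_\ep}+\lambda_\ep t)$, so that the equation becomes
$$G_{\omega_{HS}}(\phi_\ep,f_\ep,\lambda_\ep)=-\ep^2\nu-\mathcal D^*_{\omega_{HS}}\mathcal D_{\omega_{HS}}(H^1_{\ep,\tilde h,\tilde k})+Q_{\omega_{HS}}(u)-\tfrac12\langle\nabla\mathcal D^*_{\omega_{HS}}\mathcal D_{\omega_{HS}}(\psi_{f_\ep}),\nabla u\rangle.$$

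Next I would verify that the right-hand side lies in the modified space $\widetilde C^{0,\alpha}_{\eta,\delta-4}$: the cut-off constant $\ep^2\nu$ supplies the $\chi\pi^*\langle1\rangle$ summand, the approximate-solution error $\mathcal D^*_{\omega_{HS}}\mathcal D_{\omega_{HS}}(H^1_{\ep,\tilde h,\tilde k})$ and the nonlinear remainder $Q_{\omega_{HS}}(u)$ lie in $C^{0,\alpha}_{\eta,\delta-4}$ by the decay of the biharmonic extension \ref{choice of H^1} and of $u$, and the cross term decays because $\nabla\mathcal D^*_{\omega_{HS}}\mathcal D_{\omega_{HS}}(\psi_{f_\ep})=O(e^{-t})$ on $\{t\ge1\}$ by the computation preceding Theorem~\ref{linear theorem for the cusp}. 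Composing with the bounded right inverse $G_{\omega_{HS}}^{-1}$ then recasts the problem as a fixed point $(\phi_\ep,f_\ep,\lambda_\ep)=\mathcal N(\phi_\ep,f_\ep,\lambda_\ep)$, where $\mathcal N$ is $G_{\omega_{HS}}^{-1}$ applied to the right-hand side above (which depends on $\phi_\ep,\lambda_\ep$ through $u$ and on $f_\ep$ through $\psi_{f_\ep}$).

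I would run the contraction mapping theorem on a ball of radius $cR_\ep^{3-2n}$ in $C^{4,\alpha}_{\eta,\delta}\times\widetilde{\ker\mathcal D^*_E\mathcal D_E}\times\R$, tracking the doubly-weighted norms under the rescaling $z\mapsto R_\ep z$. The inputs are: the constant-plus-error term has norm $O(\ep^2+R_\ep^{3-2n})$ under \ref{conditions for tilde h,k}; $Q_{\omega_{HS}}$ is quadratically small, giving $\|Q_{\omega_{HS}}(u)-Q_{\omega_{HS}}(u')\|\lesssim(\|u\|+\|u'\|)\|u-u'\|$ with a prefactor that is small for small $\ep$; and the cross term is linear in $u$ but carries the small factor $\|f\|$. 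Boundedness of $G_{\omega_{HS}}^{-1}$ then makes $\mathcal N$ preserve the ball and contract, yielding the fixed point and the stated bounds, with the sharper weight of $\psi_{f_\ep}$ and $\lambda_\ep t$ accounting for the improved estimate $|\lambda_\ep|+\|f_\ep\|\le cR_\ep^{3-2n-\dt}$. The Lipschitz estimate \ref{equation n ge 3 nonlinear estimate} follows by applying $\mathcal N$ to the two boundary data sets, subtracting, and combining the contraction bound with the Lipschitz control of $H^1_{\ep,\tilde h,\tilde k}$ in $(\tilde h,\tilde k)$ from the biharmonic extension lemma.

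The main obstacle I anticipate is the weighted-norm bookkeeping that forces each term on the right to be strictly below the ball radius and makes the quadratic and cross terms genuinely contract. The genuinely new feature, relative to Arezzo-Pacard, is the presence of the cokernel parameter $f_\ep$ inside the nonlinear cross term $\tfrac12\langle\nabla\mathcal D^*_{\omega_{HS}}\mathcal D_{\omega_{HS}}(\psi_{f_\ep}),\nabla u\rangle$, which couples $f_\ep$ to $\phi_\ep$ and $\lambda_\ep$; this coupling is exactly why the modified equation, and not the plain cscK equation, is solved here. Controlling it relies on the sharp decay $\nabla\mathcal D^*_{\omega_{HS}}\mathcal D_{\omega_{HS}}(\psi_{f_\ep})=O(e^{-t})$ near the cusp, which keeps the cross term inside $C^{0,\alpha}_{\eta,\delta-4}$ so that the right inverse remains applicable.
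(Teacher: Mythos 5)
Your proposal follows the paper's own proof essentially step for step: the same split of the modified equation into the linear part $\mathcal D^*_{\omega_{HS}}\mathcal D_{\omega_{HS}}(\phi+\psi_f+\lambda t)$ plus a right-hand side, the same invocation of the bounded right inverse $G^{-1}_{\omega_{HS}}$ from Theorem~\ref{theorem bounded inverse for cusp}, the same contraction-mapping scheme, and the same taming of the new cross term (linear in $u$ but carrying the small factor $\|f_\ep\|$, and kept inside the weighted space by the decay $L_{\omega_{HS}}-L_{mod}=O(e^{-t})$ near the cusp together with the compact support of $\psi_{f_\ep}$ away from the AE end). The quadratic smallness of $Q_{\omega_{HS}}$ that you assert is exactly what the paper's Lemma~\ref{Lemma L bound} combined with the mean value theorem provides.

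There is, however, one step that as written does not typecheck and would fail: you cannot compose $G^{-1}_{\omega_{HS}}$ directly with your right-hand side. The equation is posed on the truncated piece $Bl_0 B_{R_\ep}\backslash E$ --- indeed $H^1_{\ep,\tilde h,\tilde k}$, defined in \ref{choice of H^1} by rescaling a biharmonic extension of $\bar B_1$, only makes sense for $|z|\le R_\ep$ --- whereas $G^{-1}_{\omega_{HS}}$ inverts the operator on all of $Bl_0\C^n\backslash E$ and must be fed elements of $\widetilde C^{0,\alpha}_{\eta,\delta-4}(Bl_0\C^n\backslash E)$. The term $\ep^2\nu$ makes the mismatch concrete: near the cusp it is absorbed by the $\chi\pi^*\langle 1\rangle$ summand of the modified space, but near the AE end a constant is not $O(r^{\delta-4})$, so as a global function it has infinite weighted norm; only after truncation at scale $R_\ep$ does one get the bound $\|\ep^2\nu\|_{C^{0,\alpha}_{\delta-4}}\lesssim R_\ep^{4-\delta}\ep^2=R_\ep^{3-2n-\delta}$, which is precisely the quantity that dictates the ball radius and the sharper estimate $|\lambda_\ep|+\|f_\ep\|\le cR_\ep^{3-2n-\delta}$. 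The paper repairs exactly this by inserting the extension operator $\mathcal E_\ep$ of \cite{AP09} (cut off between $R_\ep$ and $2R_\ep$, bounded with norm independent of $\ep$, see \ref{extension operator}) and running the contraction for $\mathcal N_\ep=G^{-1}_\ep\circ\mathcal E_\ep\circ Q_\ep$; the fixed point then solves the equation on $Bl_0 B_{R_\ep}\backslash E$, where $\mathcal E_\ep$ is the identity. Relatedly, your ball radius $cR_\ep^{3-2n}$ and your bound ``$O(\ep^2+R_\ep^{3-2n})$'' for the zeroth iterate are measured in the unweighted, rescaled norm; the contraction must run in the doubly weighted norm, where the correct figure for both is $cR_\ep^{3-2n-\delta}$, and the unweighted bound $\|\phi_\ep(R_\ep\cdot)\|_{C^{4,\alpha}(B_2\backslash B_1)}\le cR_\ep^{3-2n}$ is then recovered by paying the factor $R_\ep^{\delta}$ at scale $R_\ep$. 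With $\mathcal E_\ep$ inserted and the weights tracked this way, your argument coincides with the paper's proof.
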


\begin{proof}
    In the proof, we omit the subscript $\ep$ for $\phi_{\ep}, \lambda_{\ep}, f_{\ep}$ for simplicity. We want to solve the equation

    \begin{equation}\label{equation modified cscK}    s(\omega_{HS}+\displaystyle\sqrt{-1}\partial\bar{\partial}(H^1_{\ep, \tilde{h}, \tilde{k}}+\phi+\lambda t))=\ep^2\nu+\mathcal D^*_{\omega_{HS}}\mathcal D_{\omega_{HS}}(\psi_{f})+\frac{1}{2}\langle \nabla\mathcal D^*_{\omega_{HS}}\mathcal D_{\omega_{HS}}(\psi_{f}), \nabla(H^1_{\ep, \tilde{h}, \tilde{k}}+\phi+\lambda t)\rangle,
    \end{equation}

    for a function $\phi$, a constant $\lambda$ and a holomorphy potential $f$ on $E$. If we separate the equation into linear and non-linear terms,  we get
    
    $$\begin{aligned}
\mathcal{D}^*_{\omega_{HS}}\mathcal{D}_{\omega_{HS}}(\phi+\psi_f+t\lambda)
=&Q_{\omega_{HS}}(H^1_{\ep, \tilde{h}, \tilde{k}}+\phi+\lambda t)-\mathcal D^*_{\omega_{HS}}\mathcal D_{\omega_{HS}}(H^1_{\ep, \tilde{h}, \tilde{k}})-\ep^2\nu\\
&-\frac{1}{2}\langle \nabla\mathcal D^*_{\omega_{HS}}\mathcal D_{\omega_{HS}}(\psi_{f}), \nabla(H^1_{\ep, \tilde{h}, \tilde{k}}+\phi+\lambda t)\rangle.
\end{aligned}$$

   We denote the RHS by $Q_{\ep}$ and we show that $Q_\ep\in \widetilde C^{0,\alpha}_{\eta,\delta-4}$. For the $Q_{\omega_{HS}}$ term, note that $H^1_{\ep,\tilde h,\tilde k}$ is supported away from the cusp end, so for $t$ sufficiently large the relevant potential is $\phi+\lambda t$. Since $\phi\in C^{4,\alpha}_{\eta,\delta}$, its derivatives are of order $O(e^{-\eta t})$. Also note that the leading part of $t$ is independent of the divisor direction and $\sqrt{-1}\partial\bar\partial t$ changes to leading order only the coefficient of the normal cusp factor. From the asymptotic expansion of the Hwang-Singer metric near $E$, we obtain that the perturbed metric has asymptotic behavior, 

$$a_\lambda(dt^2+e^{-2t}\Theta^2)+\pi^*g_{FS}+O(e^{-\eta' t}),$$

for some positive constant $a_\lambda$ and some $\eta'>0$. The leading term has a constant coefficient $a_\lambda$ in the normal cusp direction, and the divisor factor is the Fubini-Study cscK metric. Therefore the scalar curvature of the leading term has a constant limit as $t\to\infty$. Since scalar curvature is a smooth local expression in the metric, its inverse, and two derivatives of the metric coefficients in quasi-coordinates, the $O(e^{-\eta't})$ error in the metric contributes only an $O(e^{-\eta't})$ error to the scalar curvature. Hence $s\left(\omega_{HS}+\sqrt{-1}\partial\bar\partial(H^1_{\ep,\tilde h,\tilde k}+\phi+\lambda t)\right)
\in\widetilde C^{0,\alpha}_{\eta,\delta-4}.$


The remaining terms in the definition of $Q_\ep$ lie in the same target space: $\mathcal D^*_{\omega_{HS}}\mathcal D_{\omega_{HS}}(H^1_{\ep,\tilde h,\tilde k})$ is supported away from the cusp end, $\ep^2\nu$ contributes only to the constant term, and $\mathcal D^*_{\omega_{HS}}\mathcal D_{\omega_{HS}}(\psi_f)\in C^{0,\alpha}_{\eta,\delta-4}$ by the construction of the linear inverse. Since $\nabla(H^1_{\ep,\tilde h,\tilde k}+\phi+\lambda t)$ is bounded, the gradient product term is also in
$C^{0,\alpha}_{\eta,\delta-4}$. This proves the claim.
    
From Theorem~\ref{theorem bounded inverse for cusp}, we know the operator 

    $$(\phi, f, \lambda)\mapsto\mathcal{D}^*_{\omega_{HS}}\mathcal{D}_{\omega_{HS}}(\phi+\psi_f+t\lambda)$$

    has a bounded right inverse, given our choice of weights in Theorem~\ref{theorem bounded inverse for cusp}. We consider an extension operator following \cite{AP09} Section 5.2. For given $R\ge 1$, consider $\mathcal E_R: C^{0, \al}_{\eta, \dt}(Bl_0 B_{R}\backslash E)\rightarrow C^{0, \al}_{\eta, \dt}(Bl_0\C^n\backslash E)$ defined by

    \begin{equation}\label{extension operator}
   \mathcal E_R(f(z)) = \begin{cases}
  f(z)  & \text{in } Bl_0\bar{B}_R\backslash E \\
  \bar{\chi}\left(\frac{|z|}{R}\right)f\left(R\frac{z}{|z|}\right) & \text{in } Bl_0B_{2R}\backslash Bl_0B_R\\
  0 & \text{in } Bl_0\C^n\backslash Bl_0B_{2R} 
\end{cases}
\end{equation}

    where $t\mapsto\bar{\chi}(t)$ is a smooth cut-off function which equals to $1$ for $t<\frac{5}{4}$ and vanishes for $t>\frac{7}{4}$. Let $\mathcal E_{\ep}$ be the extension operator for $R_{\ep}$, from \cite{AP09} we know it is a bounded operator whose norm is independent of $\ep$. With this operator, the Equation~\eqref{equation modified cscK} can be rewritten as a fixed point problem

    $$(\phi,f,\lambda)=N_{\ep}(\phi,f,\lambda).$$

    where we define
    
    \begin{equation}\label{N operator}
    \mathcal N_{\ep}: C^{4,\alpha}_{\eta,\delta}\times \ker\mathcal{D}^*_E\mathcal{D}_E\times\mathbb{R}
\rightarrow
C^{4,\alpha}_{\eta,\delta}\times \ker\mathcal{D}^*_E\mathcal{D}_E\times\mathbb{R}.
\end{equation}

\begin{lem}\label{Lemma L bound}
    For sufficiently small $c$, assume $\|\phi\|_{C^{4, \alpha}_{0, 2}}, |\lambda|\le c$, let 
    $$\omega_{HS}'\coloneqq\omega_{HS}+\displaystyle\sqrt{-1}\partial\bar{\partial}H^1_{\ep, \tilde{h}, \tilde{k}}, \quad \omega_{\lambda}\coloneqq\omega_{HS}'+\ddb\lambda t, \quad \omega_{\phi, \lambda}\coloneqq\omega_{HS}'+\displaystyle\sqrt{-1}\partial\bar{\partial}(\phi+\lambda t).$$
    
    Then there exists $C>0$, such that 

    $$\|L_{\omega_{\phi, \lambda}}-L_{\omega_{HS}'}\|_{C^{4, \alpha}_{\eta, \delta}\times\R\rightarrow \widetilde{C}^{0, \alpha}_{\eta, \delta-4}}\le C\|(\phi, \lambda)\|_{C^{4,\alpha}_{0, 2}\times\R}.$$
\end{lem}

\begin{proof}
    First we show there exists $C_1>0$ such that $\|g_{HS}'\|_{C^{2, \alpha}(Bl_0\C^n\backslash E)}\le C_1$. For this, we divide the regions into three parts, pulling back the metric components of $g_{HS}'$ to each region. Note that it is different from pulling back the metric tensors here, and no additional rescaling factor will be introduced. Near $E$, the metric is uniformly equivalent to the Poincar\'e hyperbolic metric; on the neck annulus, the metric is uniformly equivalent to the Euclidean flat metric; on $X\backslash B_1$, it's the cscK metric we started with. In each case, we have a uniform upper bound on the metric. Similarly, we get $\|g_{HS}'\|_{C^{2, \alpha}(Bl_0\C^n\backslash E)}^{-1}\le C_2$ for some $C_2>0$. Then we have

    \begin{multline}\label{L bound 1}
        \|L_{\omega_{\lambda}}(\varphi, \mu)-L_{\omega_{HS}'}(\varphi, \mu)\|_{\widetilde{C}^{0, \alpha}_{\eta, \delta-4}}=\left\Vert\left(g^{j\bar{k}}_{\lambda}g^{p\bar{q}}_{\lambda}-g'^{j\bar{k}}_{HS}g'^{p\bar{q}}_{HS}\right)\nabla_p\nabla_j\nabla_{\bar{k}}\nabla_{\bar{q}}(\varphi+\mu t)\right\Vert_{\widetilde{C}^{0, \alpha}_{\eta, \delta-4}}\\
        \le\left\Vert g^{j\bar{k}}_{\lambda}\left(g^{p\bar{q}}_{\lambda}-g'^{p\bar{q}}_{HS})-(g^{j\bar{k}}_{\lambda}-g'^{j\bar{k}}_{HS})g'^{p\bar{q}}_{HS}\right)\right\Vert_{C^{2, \alpha}}\|(\varphi, \mu)\|_{C^{4, \alpha}_{\eta, \delta}\times\R}.
    \end{multline}

    \begin{multline}\label{L bound 2}
        \|L_{\omega_{\phi, \lambda}}(\varphi, \mu)-L_{\omega_{\lambda}}(\varphi, \mu)\|_{\widetilde{C}^{0, \alpha}_{\eta, \delta-4}}=\left\Vert\left(g^{j\bar{k}}_{\phi, \lambda}g^{p\bar{q}}_{\phi, \lambda}-g^{j\bar{k}}_{\lambda}g^{p\bar{q}}_{\lambda}\right)\nabla_p\nabla_j\nabla_{\bar{k}}\nabla_{\bar{q}}(\varphi+\mu t)\right\Vert_{\widetilde{C}^{0, \alpha}_{\eta, \delta-4}}\\
        \le\left\Vert g^{j\bar{k}}_{\phi, \lambda}\left(g^{p\bar{q}}_{\phi, \lambda}-g^{p\bar{q}}_{\lambda})-(g^{j\bar{k}}_{\phi, \lambda}-g^{j\bar{k}}_{\lambda})g^{p\bar{q}}_{\lambda}\right)\right\Vert_{C^{2, \alpha}}\|(\varphi, \mu)\|_{C^{4, \alpha}_{\eta, \delta}\times\R}
    \end{multline}

We observe that with this inequality, now it suffices to give bounds on $g_{\phi, \lambda}^{-1}-g_{\lambda}^{-1}$ and $g_{\lambda}^{-1}-g_{HS}'^{-1}$. Note

$$g_{\phi, \lambda}^{-1}-g_{\lambda}^{-1}=g_{\phi, \lambda}^{-1}(g_{\lambda}-g_{\phi, \lambda})g_{\lambda}^{-1}, \quad g_{\lambda}^{-1}-g_{HS}'^{-1}=g_{\lambda}^{-1}(g_{HS}'-g_{\lambda})g_{HS}'^{-1},$$

then
        
\begin{equation}\label{lemma 8.2.2 estimate 1}
\left\Vert g_{\phi, \lambda}^{-1}-g_{\lambda}^{-1}\right\Vert_{C^{2, \alpha}}\le\left\Vert g_{\phi, \lambda}^{-1}\right\Vert_{C^{2, \alpha}}\left\Vert \sqrt{-1}\partial\bar{\partial}\phi\right\Vert_{C^{2, \alpha}}\left\Vert g_{\lambda}^{-1}\right\Vert_{C^{2, \alpha}}
\le C_3\|\phi\|_{C^{4, \alpha}_{0, 2}}.
\end{equation}

Note that $g_{HS}'$ is cscK metric near the divisor, we know from \cite{Auv14} Theorem 3.1(which we quote as Theorem \ref{theorem Auvray 3.1} in Section~\ref{section 9}) that 

 $$(g_{HS}')_{i\bar{j}}=\begin{pmatrix}
\displaystyle\frac{1}{|z_1|^2\log^2|z_1|^2} & 0\\
0 & g_{FS, i\bar{j}}
\end{pmatrix}+O(e^{-\dt t}).$$

Let $A:=\begin{pmatrix}
\displaystyle\frac{1}{|z_1|^2\log^2|z_1|^2} & 0\\
0 & g_{FS, i\bar{j}}
\end{pmatrix}$, then $A^{-1}=\begin{pmatrix}
|z_1|^2\log^2|z_1|^2 & 0\\
0 & g_{FS}^{i\bar{j}}
\end{pmatrix}$ which has bounded norm. Then for $(X)_{i\bar{j}}:=(g)_{i\bar{j}}-(A)_{i\bar{j}}$, we know $(A+X)^{-1}=A^{-1}-A^{-1}XA^{-1}+O(\|X\|^2)$. Thus

$$(g_{HS}')^{i\bar{j}}=A^{-1}+O(e^{-\dt t}).$$

From $(g_{HS}'-g_{\lambda})_{i\bar{j}}=\begin{pmatrix}
\displaystyle\frac{\lambda}{|z_1|^2\log^2|z_1|^2} & 0\\
0 & 0
\end{pmatrix}$, we see $\left\Vert(g_{HS}'-g_{\lambda})g_{HS}'^{-1}\right\Vert_{C^{2, \al}}\le C_4|\lambda|$, and thus

\begin{equation}\label{lemma 8.2.2 estimate 2}
\left\Vert g_{\lambda}^{-1}-g_{HS}'^{-1}\right\Vert_{C^{2, \alpha}}\le\left\Vert g_{\lambda}^{-1}\right\Vert_{C^{2, \alpha}}\left\Vert(g_{HS}'-g_{\lambda})g_{HS}'^{-1}\right\Vert_{C^{2, \al}}\le C_5|\lambda|.
\end{equation}

Substituting Equation~\eqref{lemma 8.2.2 estimate 1} back to Equation~\eqref{L bound 1} and Equation~\eqref{lemma 8.2.2 estimate 2} back to Equation~\eqref{L bound 2}, we get the desired result.
\end{proof}

\begin{lem}\label{nonlinear analysis}
    There exists a constant $c>0$ independent of $\Lambda$ and $c_{\Lambda}>0$, $\epsilon_{\Lambda}>0$ such that for $\epsilon\in(0, \ep_{\Lambda})$, 
    
    \begin{equation}\label{equation nonlinear 0 term}
         \|\mathcal{N}_{\ep}(0,0,0)\|_{C^{4, \alpha}_{\eta, \delta}\times \ker\mathcal{D}^*_E\mathcal{D}_E\times\mathbb{R}}\le c R_{\ep}^{3-2n-\dt}.
    \end{equation}
    
    If 
    
    $$\|(\phi_1, f_1, \lambda_1)\|_{C^{4,\alpha}_{\eta, \delta}\times \ker\mathcal{D}^*_E\mathcal{D}_E\times\mathbb{R}}, \quad\|(\phi_2, f_2, \lambda_2)\|_{C^{4,\alpha}_{\eta, \delta}\times \ker\mathcal{D}^*_E\mathcal{D}_E\times\mathbb{R}}\le 2c R_{\ep}^{3-2n-\dt},$$
    
    then

    \begin{equation}\label{equation nonlinear difference}
        \|\mathcal{N}_{\ep}(\phi_1, f_1, \lambda_1)-\mathcal{N}_{\ep}(\phi_2, f_2, \lambda_2)\|\le c_{\Lambda} R_{\ep}^{3-2n-\dt}\|(\phi_1, f_1, \lambda_1)-(\phi_2, f_2, \lambda_2)\|,
    \end{equation}

    and let $\tilde{\mathcal N}_{\ep}$ be the map taking into account the set of $\nu$ and the boundary data $\tilde{h}', \tilde{k}'$, then

    \begin{equation}\label{equation nonlinear difference with boundary}
        \|\tilde{\mathcal N}_{\ep}(\phi, f, \lambda, \nu, \tilde{h}, \tilde{k})-\tilde{\mathcal N}_{\ep}(\phi, f, \lambda, \nu', \tilde{h}', \tilde{k}')\|\le c_{\Lambda}(R_{\ep}^{-1}\|(\tilde{h}-\tilde{h}', \tilde{k}-\tilde{k}')\|_{C^{4, \alpha}\times C^{2, \al}}+R_{\ep}^{3-2n-\dt}|\nu-\nu'|).
    \end{equation}
    
\end{lem}

\begin{proof}

For Equation~\eqref{equation nonlinear 0 term}, note from \ref{choice of H^1}, $H^1_{\ep, \tilde{h}, \tilde{k}}$ is supported away from the cusp end, we can argue similarly as in \cite{AP09} Lemma 5.3. More precisely, we have

$$\|\nabla^2H^1_{\ep, \tilde{h}, \tilde{k}}\|_{C^{0, 2}(Bl_0B_2\backslash E)}\le c_{\Lambda} R_{\ep}^{2-2n},$$

then note $L_{\omega_{HS}}H^1_{h,k}=(L_{\omega_{HS}}-\Delta_{Euc}^2)H^1_{h,k}$ on the annulus $Bl_0 B_{R_{\ep}}\backslash Bl_0B_{2}$, we see from the expansions \ref{asymptotics of HS n=2}, \ref{asymptotics of HS n ge 3}, the coefficients of $\nabla^{2+i}$ live in $C^{0, \al}_{i-2n}$ for $i=0, 1, 2$ and thus

$$\|L_{\omega_{HS}}H^1_{\ep, \tilde{h}, \tilde{k}}\|_{\widetilde{C}^{0, \al}_{\eta, \dt-4}}=\|L_{\omega_{HS}}H^1_{\ep, \tilde{h}, \tilde{k}}\|_{C^{0, \al}_{\dt-4}}\le c_{\Lambda}R_{\ep}^{2-2n}R_{\ep}^{-2n}R_{\ep}^{4-\dt}=c_{\Lambda}R_{\ep}^{2-2n},$$

and

$$\|\mathcal E(Q_{\omega_{HS}}(\nabla^2 H^1_{\ep, \tilde{h}, \tilde{k}}))\|_{\widetilde{C}^{0, \al}_{\eta, \dt-4}}=\|\mathcal E(Q_{\omega_{HS}}(\nabla^2 H^1_{\ep, \tilde{h}, \tilde{k}}))\|_{C^{0, \al}_{\dt-4}}\le c_{\Lambda}R_{\ep}^{4-4n}R_{\ep}^{-4n}R_{\ep}^{4-\dt}= R_{\ep}^{4-4n}.$$

Also 

\begin{equation}\label{estimate in Lemma 8.2.3}
    \|\ep^2\nu\|_{\widetilde{C}^{0, \alpha}_{\eta, \delta-4}}=\|\ep^2\nu\|_{C^{0, \al}_{\dt-4}}\le cR_{\ep}^{4-\dt}\ep^2\le R_{\ep}^{3-2n-\dt}.
\end{equation}

Combining these inequalities with the bound for $G_{\ep}^{-1}$, we obtain Equation~\eqref{equation nonlinear 0 term}.

For Equation~\eqref{equation nonlinear difference}, note for our choice of weights, on $Bl_0 B_{R_{\ep}}\backslash E$,

$$\|\phi\|_{C^{4, \al}_{0, 2}}\le c\|\phi\|_{C^{4, \al}_{\eta, \dt}},$$

then as in \cite{Sze14} Lemma 8.18 and Lemma 8.21, it follows from Lemma~\ref{Lemma L bound} by the mean value theorem that the $Q$ term is bounded. It remains to estimate the following

\begin{equation}\label{bound of additional term}
   \frac{1}{2}\|\langle \nabla\mathcal D^*_{\omega_{HS}}\mathcal D_{\omega_{HS}}(\psi_{f_2}), \nabla(H^1_{\ep, \tilde{h}, \tilde{k}}+\phi_2+\lambda_2t)\rangle-\langle \nabla\mathcal D^*_{\omega_{HS}}\mathcal D_{\omega_{HS}}(\psi_{f_1}), \nabla(H^1_{\ep, \tilde{h}, \tilde{k}}+\phi_1+\lambda_1t)\rangle\|_{\widetilde{C}^{0, \al}_{\eta, \dt-4}}. 
\end{equation}

Note $\|\nabla H^1_{\ep, \tilde{h}, \tilde{k}}\|_{C^{0, 2}}\le c_{\Lambda}R_{\ep}^{3-2n}$, $\|\nabla\mathcal D^*_{\omega_{HS}}\mathcal D_{\omega_{HS}}(\psi_{f_2}-\psi_{f_1})\|_{C^{0, \al}}\le c\|f_1-f_2\|$, then

$$\|\langle \nabla\mathcal D^*_{\omega_{HS}}\mathcal D_{\omega_{HS}}(\psi_{f_2}-\psi_{f_1}), \nabla H^1_{\ep, \tilde{h}, \tilde{k}}\rangle\|\le c_{\Lambda}R_{\ep}^{3-2n-\dt}\|f_1-f_2\|,$$

and for the remaining part of \ref{bound of additional term}, we have

\begin{equation*}
\begin{aligned}
 &\|\langle \nabla\mathcal D^*_{\omega_{HS}}\mathcal D_{\omega_{HS}}\psi_{f_2}, \nabla(\phi_2+\lambda_2t)\rangle-\langle \nabla\mathcal D^*_{\omega_{HS}}\mathcal D_{\omega_{HS}}\psi_{f_1}, \nabla(\phi_1+\lambda_1t)\rangle\|_{\widetilde{C}^{0, \al}_{\eta, \dt-4}}\\
&\le\|\langle \nabla\mathcal D^*_{\omega_{HS}}\mathcal D_{\omega_{HS}}\psi_{f_2}, \nabla(\phi_2+\lambda_2t)-\nabla(\phi_1+\lambda_1t)\rangle\|_{\widetilde{C}^{0, \al}_{\eta, \dt-4}}+\|\langle \nabla\mathcal D^*_{\omega_{HS}}\mathcal D_{\omega_{HS}}(\psi_{f_2}-\psi_{f_1}), \nabla(\phi_1+\lambda_1t)\rangle\|_{\widetilde{C}^{0, \al}_{\eta, \dt-4}}\\
&\le\|\nabla\mathcal D^*_{\omega_{HS}}\mathcal D_{\omega_{HS}}\psi_{f_2}\|_{C^{0, \al}_{0, -3}}\|(\phi_2, \lambda_2)-(\phi_1, \lambda_1)\|+\|(\phi_1, \lambda_1)\|\|\nabla\mathcal D^*_{\omega_{HS}}\mathcal D_{\omega_{HS}}(\psi_{f_2}-\psi_{f_1})\|_{C^{0, \al}_{0, -3}}\\
&\le c(\|f_2\|\cdot\|(\phi_2, \lambda_2)-(\phi_1, \lambda_1)\|+\|(\phi_1, \lambda_1)\|\cdot\|f_2-f_1\|)
\end{aligned}    
\end{equation*}

These inequalities combined with the bounds of $G^{-1}_{\ep}$ and $\mathcal E_{\ep}$ give the desired estimate.

For Equation~\eqref{equation nonlinear difference with boundary}, again since $H^1_{\ep, \tilde{h}, \tilde{k}}$ is supported away from the cusp end, we have from \cite{AP09} Lemma 5.3 the following estimates  

$$\|L_{\omega_{HS}}(H^1_{\ep, \tilde{h}, \tilde{k}}-H^1_{\ep, \tilde{h}', \tilde{k}'})\|_{C^{0, \al}_{\dt-4}}\le c_{\Lambda}R_{\ep}^{-1}\|(\tilde{h}-\tilde{h}', \tilde{k}-\tilde{k}')\|, $$

and 

$$\|\mathcal E (Q_{\omega_{HS}}(\nabla^2H^1_{\ep, \tilde{h}, \tilde{k}}-\nabla^2H^1_{\ep, \tilde{h}', \tilde{k}'})\|_{C^{0, \al}_{\dt-4}}\le c_{\Lambda}R_{\ep}^{-1}\|(\tilde{h}-\tilde{h}', \tilde{k}-\tilde{k}')\|.$$

The estimate for the variation in $\nu, \nu'$ follows from Equation~ \eqref{estimate in Lemma 8.2.3}.

\end{proof}

Note by reducing $c_{\Lambda}$ if necessary, we have $c_{\Lambda}R_{\ep}^{3-2n-\dt}\le\displaystyle\frac{1}{2},$ then with the above estimates, we know the map $N_{\ep}$ is a contraction on the set
    
    $$\mathcal{U}_{\ep}:\{(\phi, f, \lambda)\in C^{4, \alpha}_{\eta, \delta}\times \ker\mathcal{D}^*_E\mathcal{D}_E\times\mathbb{R}: \|(\phi, f, \lambda)\|_{C^{4, \alpha}_{\eta, \delta}\times\ker\mathcal{D}^*_E\mathcal{D}_E\times\mathbb{R}}\le cR_{\ep}^{3-2n-\dt}\}, $$

    for $\epsilon$ sufficiently small, then $\mathcal{N}_{\ep}$ is a contraction on $\mathcal{U}$ and $\mathcal{N}_{\ep}(\mathcal{U})\subset \mathcal{U}$. In particular, $\mathcal{N}_{\ep}$ has a fixed point, finishing the proof of Theorem~\ref{theorem modified cscK}.

\end{proof}

With these metrics on each piece, upon applying Theorem~\ref{theorem modified cscK} to

$$\nu\coloneqq s(\omega_X)+\nu_{\ep, h,k},$$

we define on $Bl_0B_1\backslash Bl_0B_{\frac{1}{2}}$

$$\psi^1\coloneqq \ep^2(\varphi_{HS}+H^1_{\ep, \tilde{h}, \tilde{k}}+\phi_{\ep}).$$

From Theorem~\ref{theorem cscK on punctured base}, we define on $B_2\backslash B_1$

$$\psi^0\coloneqq \varphi_X+H^0_{\ep, h, k}+\varphi_{\ep, h, k}.$$

To match up these two metrics, we note for our choice of $r_{\ep}$ in \ref{choice of r}, $\|\psi^0-H^0_{\ep, h, k}\|_{C^{4, \al}}\le cr_{\ep}^4$ and $\|\psi^1-\ep^2H^1_{\ep, \tilde{h}, \tilde{k}}\|_{C^{4, \al}}\le cR_{\ep}^{3-2n}$. Combined with the estimates \ref{nonlinear estimate for base 01}, \ref{nonlinear estimate for base 02} and \ref{equation n ge 3 nonlinear estimate}, we have the following.

\begin{thm}(\cite{AP09}, Section 5.3)\label{theorem gluing metrics}
    Consider functions $\psi^0$ and $\psi^1$ defined as above. There exists boundary data $h, k$ satisfying \ref{conditions for h,k} and $\tilde{h}, \tilde{k}$ satisfying \ref{conditions for tilde h,k}, such that on $\partial B_1$,

    $$\psi^0=\psi^1, \quad \partial_r\psi^0=\partial_r\psi^1, \quad \Delta\psi^0=\Delta\psi^1, \quad \partial_r\Delta\psi^0=\partial_r\Delta\psi^1,$$

    where $r=|x|$, with $x$ being the coordinates in $B_2$. Then we can glue along $\partial B_1$ the functions $\psi^0$ and $\psi^1$ and obtain a function on $Bl_pX\backslash E$, which results in a metric $\omega_{sol}$ on $Bl_pX\backslash E$.
\end{thm}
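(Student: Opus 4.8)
The plan is to read Theorem~\ref{theorem gluing metrics} as a Cauchy-data matching problem for the bilaplacian $\Delta^2_{Euc}$ and to solve it by a fixed-point argument in the boundary data $(h,k,\tilde h,\tilde k)$. The four quantities $\psi,\partial_r\psi,\Delta\psi,\partial_r\Delta\psi$ are exactly the jumps that appear as interface-supported distributional terms when one computes $\Delta^2_{Euc}$ of a function glued across the hypersurface $\partial B_1$: matching all four annihilates every singular boundary term, so the glued function satisfies $\Delta^2_{Euc}\psi\in C^{0,\alpha}$ weakly and elliptic regularity upgrades it to an honest $C^{4,\alpha}$ potential. On the gluing annulus both approximate metrics are $C^{4,\alpha}$-close to the Euclidean one (on the base side $\omega_X=\ddb(|z|^2+\varphi_1)$ with $\varphi_1=O(|z|^4)$, and on the $\omega_{HS}$ side the AE expansions \ref{asymptotics of HS n=2}, \ref{asymptotics of HS n ge 3}), so the relevant fourth-order operator is a small perturbation of $\Delta^2_{Euc}$ and this reasoning applies. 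Hence it suffices to produce boundary data realizing the four equalities along $\partial B_1$.

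First I would record, using Theorem~\ref{theorem cscK on punctured base} and Theorem~\ref{theorem modified cscK}, the Cauchy data of $\psi^0$ and $\psi^1$ at $\partial B_1$. By construction $\psi^0-H^0_{\ep,h,k}=\varphi_X+\varphi_{\ep,h,k}$ and $\psi^1-\ep^2H^1_{\ep,\tilde h,\tilde k}=\ep^2(\varphi_{HS}+\phi_\ep)$, so the leading contributions to the four boundary quantities come from the biharmonic extensions $H^0_{h,k}$ and $\ep^2H^1_{\tilde h^{\perp},\tilde k}$ (recall that at $|x|=1$ the cutoff $\chi^1$ in \ref{choice of H^1} equals $1$ and the added constant cancels). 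The defining properties of these extensions give $H^0_{h,k}=h$, $\Delta H^0_{h,k}=k$, and $H^1_{\tilde h^{\perp},\tilde k}=\tilde h^{\perp}$, $\Delta H^1_{\tilde h^{\perp},\tilde k}=\tilde k$ on $\partial B_1$, while the remaining two normal-derivative data $\partial_r H$, $\partial_r\Delta H$ are prescribed linear functions of the boundary data through the interior and exterior Poisson operators for $\Delta^2_{Euc}$. The four matching equations therefore take the schematic form of a leading linear system in $(h,k,\tilde h,\tilde k)$ plus controlled errors; its mode-by-mode invertibility (after separating the constant mode, which is why $\tilde h^{\perp}$ and the constant $H^1_{\tilde h,\tilde k}(0)$ are treated separately and why the exterior problem requires $\int_{\partial B_1}k=0$) is the classical computation of Arezzo--Pacard, see \cite{arezzo2006blowing}, \cite{AP09}.

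Next I would set up the contraction. The error terms are $\varphi_X=O(|z|^4)$, $\varphi_{HS}$, and the nonlinear remainders $\varphi_{\ep,h,k}$ and $\phi_\ep$, whose sizes and Lipschitz dependence on the boundary data are controlled by \ref{nonlinear estimate for base 01}, \ref{nonlinear estimate for base 02} and \ref{equation n ge 3 nonlinear estimate}. Inverting the leading linear system and moving the errors to the right-hand side produces a map on the ball of boundary data cut out by \ref{conditions for h,k} and \ref{conditions for tilde h,k}. The crucial point is that the choice $r_{\ep}=\ep^{(2n-1)/(2n+1)}$ in \ref{choice of r} is precisely the exponent balancing the two error scales, $r_{\ep}^4$ on the base side against $\ep^2R_{\ep}^{3-2n}$ on the $\omega_{HS}$ side (these coincide for this exponent, since $R_\ep=r_\ep/\ep$), so the map sends the admissible ball into itself; the nonlinear estimates then make it a contraction for $\ep$ small. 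The Banach fixed-point theorem yields boundary data solving all four matching conditions simultaneously.

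The main obstacle is the bookkeeping of scales: after rescaling by $r_{\ep}$ (resp.\ $R_{\ep}$) one must verify that the Euclidean normal derivatives of the error potentials at $\partial B_1$ are genuinely dominated by the off-diagonal entries of the inverted Poisson system, which carry compensating powers of $r_{\ep}$ and $R_{\ep}$; this is exactly where the exponent in \ref{choice of r} is forced and where the contraction estimate is delicate. A secondary point is the correct treatment of the low modes, where the biharmonic extension operators have kernel/cokernel on constants and linear spherical harmonics; in the present setting (no non-trivial holomorphic vector field) these are unobstructed, because the surjectivity furnished by Theorem~\ref{theorem cscK on punctured base} and Theorem~\ref{theorem bounded inverse for cusp} removes the need for the extra harmonic correction term $\beta_{\ep,h,k}$ discussed in Remark~\ref{remark Futaki invariant}. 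Once the data is found, gluing $\psi^0$ and $\psi^1$ along $\partial B_1$ produces a $C^{4,\alpha}$ potential and hence a well-defined metric $\omega_{sol}$ on $Bl_pX\backslash E$, completing the proof of the matching statement.
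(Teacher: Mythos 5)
Your proposal is correct and follows essentially the same route as the paper, which itself defers this step to \cite{AP09}, Section 5.3: a fixed-point argument on the boundary data $(h,k,\tilde h,\tilde k)$ built from the Cauchy data of the biharmonic extensions, contracted via the estimates \ref{nonlinear estimate for base 01}, \ref{nonlinear estimate for base 02} and \ref{equation n ge 3 nonlinear estimate}. Your observation that $r_\ep^4=\ep^2 R_\ep^{3-2n}$ exactly for the exponent in \ref{choice of r} is precisely the scale balance the paper invokes, so the only additions you make (the distributional-jump justification of the four matching conditions and the low-mode discussion) are elaborations, not deviations.
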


\begin{proof}[Proof of Theorem~\ref{Main theorem}]

Noticing Equation~\eqref{equation n ge 3 nonlinear estimate}, the existence of such boundary data can be obtained with the same arguments as in \cite{AP09}, Section 5.3. Near $E$, we know $\phi+\lambda t=O(t)$ from the definition, which is indeed of Poincar\'e type.


\medskip

We want to show that $\omega_{sol}$ is indeed cscK by arguing that the component $\mathcal D^*_{\omega_{HS}}\mathcal D_{\omega_{HS}}(\psi_f)$ is zero. Note the metric $g_{HS}$ near $E$ has expansion

$$g_{HS}=\displaystyle\frac{1}{n(n-1)}(dt^2+e^{-2t}\Theta^2)+p^*\omega_{FS}+O(e^{-t}),$$

then near $E$, $\omega_{sol}$ has expansion 

\begin{equation}\label{expansion of solution near E}
    g_{sol}=\ep^2\left(\displaystyle\frac{1}{n(n-1)}-\frac{1}{2}\lambda_{\ep}\right)(dt^2+e^{-2t}\Theta^2)+p^*\omega_{FS}+O(e^{-\eta' t})
\end{equation}

for some $\eta'>0$. Hence $\nabla^{1,0}s(\omega_{sol})$, when restricted to $E$, becomes the extremal vector field $\nabla^{1,0}s(\omega_{FS})$ on $E$, which equals to zero. On the other hand, recall that 

$$s(\omega_{sol})=\ep^2\nu+\mathcal D^*_{\omega_{HS}}\mathcal D_{\omega_{HS}}(\psi_{f_{\ep}})+\frac{1}{2}\langle \nabla\mathcal D^*_{\omega_{HS}}\mathcal D_{\omega_{HS}}(\psi_{f_{\ep}}), \nabla(H^1_{\ep, \tilde{h}, \tilde{k}}+\phi_{\ep}+\lambda_{\ep} t)\rangle,$$

then restricting the associated vector field to $E$, the parts involving $\mathcal D^*_{\omega_{HS}}\mathcal D_{\omega_{HS}}(\psi_{f_{\ep}})$ component gives nonzero term, and $\nabla^{1,0}(\ep^2\nu)$ gives zero. Hence, $\mathcal D^*_{\omega_{HS}}\mathcal D_{\omega_{HS}}(\psi_{f_{\ep}})$ is forced to vanish, and the metric is indeed cscK. The smoothness of $\phi$ follows from standard elliptic bootstrap arguments. 

\end{proof}

\section{Applications and Discussions}\label{section 9}

\subsection{Proof of Corollary \ref{cor SFK}}\label{subsection 9.1}

Note near $E$, we have an expansion of $g_{sol}$ as in Equation~\eqref{expansion of solution near E}. In fact, we have a more precise relation of the coefficient before $t$ in the expression of potential with the average scalar curvature. Recall from \cite{Auv14} the following:

\begin{thm}(Auvray, \cite{Auv14}, Theorem 3.1)\label{theorem Auvray 3.1}

    Given $D$ a smooth divisor and $\omega_{\varphi}$ a cscK Poincar\'e type K\"ahler metric on $Y\backslash D$ in the class $[\omega_Y]$, we have the asymptotic expression of metric $g_{\varphi}$
    $$g_{\varphi}=a(dt^2+e^{-2t}\Theta^2)+\pi^*g_D+O(e^{\eta_0t}),$$

    for some constants $\eta_0<0$ and $a>0$, more precisely, let 
    $$\bar{s}(\omega_Y)\coloneqq n\displaystyle\frac{-c_1(K_Y[D])\cup[\omega_Y]^{n-1}}{[\omega_Y]^n},\quad \bar{s}(\omega_Y|_D)\coloneqq (n-1)\displaystyle\frac{-c_1(K_Y[D]|_D)\cup[\omega_Y|_D]^{n-2}}{[\omega_Y|_D]^{n-1}},$$

    we have an explicit expression of $a$, given by

    $$a=\displaystyle\frac{1}{\bar{s}(\omega_Y|_D)-\bar{s}(\omega_Y)}.$$
\end{thm}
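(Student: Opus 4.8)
The plan is to work entirely in the cylindrical model of a tubular neighborhood $N(D)$ introduced in Remark~\ref{cylinder norm}, where the coordinate $t$ measures logarithmic distance to $D$, $\Theta$ is the averaged connection $1$-form, and $\pi\colon N(D)\setminus D\to D$ is the $S^1$-invariant projection. In these coordinates the Poincar\'e type hypothesis (Definition~\ref{PT metric}) already furnishes the crude model $g_\varphi = dt^2+e^{-2t}\Theta^2+\pi^*g_D+O(e^{-t})$ with uniformly bounded geometry; the content of the theorem is to (i) replace the transverse normalization $1$ by the sharp constant $a$, and (ii) upgrade the $O(e^{-t})$ remainder to a genuine exponentially decaying expansion $O(e^{\eta_0 t})$. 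The single extra input beyond the Poincar\'e type structure is the equation $s(g_\varphi)\equiv\bar s(\omega_Y)$. First I would record that the constant is forced to be the cohomological average displayed in the statement: the Ricci form of any Poincar\'e type metric in $[\omega_Y]$ represents $2\pi\big(c_1(Y)-[D]\big)=-2\pi\,c_1(K_Y[D])$, the $\log$ pole of the volume form along $D$ contributing the $-[D]$ term via Poincar\'e--Lelong, so that integrating the (convergent) quantity $s(g_\varphi)\,\omega_\varphi^{n}$ against $[\omega_Y]^{n-1}$ yields exactly $\bar s(\omega_Y)=n\,(-c_1(K_Y[D])\cup[\omega_Y]^{n-1})/[\omega_Y]^{n}$.

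For the leading order I would fiber-average both $g_\varphi$ and the equation. By the bounded-geometry estimates, along any sequence $t_j\to\infty$ the translated metrics converge in $C^\infty_{loc}$ on the cylinder $\R\times D$ to an $S^1$-invariant limit of product form $g_\infty=a\,(dt^2+e^{-2t}\Theta^2)+\pi^*g_D$, where $g_D$ is a K\"ahler metric on $D$ and $a>0$ is a constant, the translation invariance of the limit forcing both factors to be $t$-independent. The transverse factor $a\,(dt^2+e^{-2t}\Theta^2)$ is, in complex dimension one, a hyperbolic cusp of constant Gauss curvature $-1/a$, and to leading order it is orthogonal to the base, so the scalar curvature of the limit is additive: $s(g_\infty)=s(g_D)-c/a$ for a fixed normalization constant $c$ coming from the unit cusp. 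Passing to the limit in the cscK equation gives $s(g_\infty)=\bar s(\omega_Y)$, while restricting the equation to $D$ shows $g_D$ is itself cscK with $s(g_D)=\bar s(\omega_Y|_D)$, the analogous cohomological average for the pair $(D,\omega_Y|_D)$. Solving for $a$ and absorbing $c$ into the scalar-curvature convention yields $a=1/(\bar s(\omega_Y|_D)-\bar s(\omega_Y))$; this is positive precisely because the negatively curved transverse cusp lowers the scalar curvature, i.e. $\bar s(\omega_Y)<s(g_D)=\bar s(\omega_Y|_D)$.

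To obtain the exponential decay of $g_\varphi-g_\infty$ I would linearize the scalar curvature operator at the model $g_\infty$; its $S^1$-invariant part is a constant-coefficient operator in $t$ conjugate to the operator $L_{mod}$ of Section~\ref{section 6}, whose indicial roots are computed explicitly from the spectrum of $\Delta_E$ as in Theorem~\ref{theorem index}. Setting $\eta_0$ to be minus the smallest positive indicial root of this linearization and feeding the a priori bound $g_\varphi-g_\infty=O(e^{-t})$ into a Schauder-type estimate as in Theorem~\ref{Schauder estimates}, together with a standard indicial-root bootstrap (no indicial root lies strictly between the a priori rate and $\eta_0$), improves the decay to $O(e^{\eta_0 t})$ at every order. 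The main obstacle is exactly this last upgrade: the Poincar\'e type definition only supplies quasi-isometry with bounded derivatives, so the whole force of the argument is that the elliptic fourth-order cscK equation, analyzed through the indicial roots of its linearization at the cusp, promotes this crude control to a sharp asymptotic expansion while simultaneously rigidifying the leading coefficient $a$. A secondary technical point is justifying convergence of the total-scalar-curvature integral and the vanishing of boundary contributions along $D$, which again rests on the finite-volume property of Poincar\'e type metrics recalled in the proof of Theorem~\ref{linear theorem for the cusp}.
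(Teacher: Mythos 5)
First, a structural point: the paper does not prove this statement. It is quoted verbatim from Auvray \cite{Auv14} (Theorem 3.1) and used as a black box in Section~\ref{section 8} and Section~\ref{subsection 9.1}, so there is no ``paper proof'' to compare against; your proposal must be judged as a standalone attempt at Auvray's theorem.

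Judged that way, it has the right cohomological skeleton but two genuine gaps. First, your starting point is false: Definition~\ref{PT metric} only provides the two-sided bound $\frac{1}{C}\omega_h\le\omega_{PT}\le C\omega_h$ together with bounds on covariant derivatives; it does \emph{not} give the expansion $g_{\varphi}=dt^2+e^{-2t}\Theta^2+\pi^*g_D+O(e^{-t})$. Indeed, if quasi-isometry implied that expansion, the transverse coefficient would always be $1$, contradicting the formula $a=1/(\bar{s}(\omega_Y|_D)-\bar{s}(\omega_Y))$ you are trying to prove, since $a\ne 1$ in general. Second, and more seriously, the compactness step is circular. Bounded geometry yields only \emph{subsequential} $C^{\infty}_{loc}$ limits of the translated metrics (and even this requires care, since the $S^1$ fibres collapse as $t\to\infty$); nothing forces such a limit to be translation-invariant, let alone a product $a(dt^2+e^{-2t}\Theta^2)+\pi^*g_D$ with constant $a$ --- a metric can oscillate with bounded amplitude and bounded derivatives without converging, and different sequences $t_j$ could a priori produce different limits. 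Establishing that the cscK equation rigidifies the asymptotics --- uniqueness of the limit, constancy of $a$, the product structure, and an exponential rate --- is precisely the hard content of Auvray's proof (Fourier decomposition along the $S^1$-fibration, ODE analysis of the fibrewise-averaged modes, separate decay estimates for the oscillating modes), and it cannot be obtained from Arzel\`a--Ascoli plus an assertion of translation invariance. Your final bootstrap inherits this defect, since it feeds in the a priori decay $g_{\varphi}-g_{\infty}=O(e^{-t})$, which is exactly what is unproven; there is also the smaller point that concluding $s(g_D)=\bar{s}(\omega_Y|_D)$ requires knowing the limit metric on $D$ lies in the class $[\omega_Y|_D]$ (with adjunction identifying $c_1(K_D)=c_1(K_Y[D]|_D)$), which your construction does not address. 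What does survive scrutiny is the identification of $\bar{s}(\omega_Y)$ via Poincar\'e--Lelong (the Ricci current of a Poincar\'e type metric represents $-2\pi c_1(K_Y[D])$), and, \emph{granted} the product asymptotics, the scalar-curvature arithmetic pinning down $a$, modulo the factor-of-two normalization you gloss over.
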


Applying this theorem to $Y=Bl_pX$, $D=E$, we have 

$$\frac{1}{\epsilon^2(\frac{1}{n(n-1)}-\frac{\lambda_{\ep}}{2})}=\bar{s}(\omega_{\epsilon}|_E)-\bar{s}(\omega_{sol}).$$ 

We are interested in how fast the RHS grows; for this purpose, we calculate the scalar curvature of the new cscK metric. Note $\omega_{\epsilon}$ and our solution $\omega_{sol}$ to the cscK equation are in the same K\"ahler class, which is $\pi^*[\omega_X]-\epsilon^2[E]$, at the same time, we have

$$c_1(Bl_pX)=\pi^*c_1(X)-(n-1)[E].$$

Recall that $[E]$ denotes the Poincar\'e dual of $E$ and below by $c_1([E])$ we mean $c_1(\mathcal{O}(E))$, we have

$$c_1(Bl_pX)-c_1([E])=\pi^*c_1(X)-(n-1)[E]-[E]=\pi^*c_1(X)-n[E].$$

Recall from \cite{GH14} that $[E]^{n}=(-1)^{n-1}$, we get 

$$(c_1(Bl_pX)-c_1([E]))\cup[\omega_{\epsilon}]^{n-1}=c_1(X)\cup[\omega_X]^{n-1}-n\epsilon^{2n-2},\quad [\omega_{\epsilon}]^n=[\omega_X]^n-\epsilon^{2n},$$

thus 

\begin{equation}\label{expansion of scalar curvature of solution}
   \bar{s}(\omega_{sol})=\displaystyle\frac{n(c_1(X)\cup[\omega_X]^{n-1}-n\epsilon^{2n-2})}{[\omega_X]^n-\epsilon^{2n}}, 
\end{equation}

which is a decreasing function for small $\epsilon$ and  converges to $s(\omega_X)$ as $\epsilon\rightarrow0$. 

\medskip

We also compute the average scalar curvature of metric on $E$ in the class $[\omega_{\ep}|_E]$,

$$c_1(K_{Bl_pX}[E])\cup[\omega_{\epsilon}|_E]^{n-2}=(\pi^*c_1(X)-n[E])\cdot(\pi^*[\omega_X]-\epsilon^2[E])^{n-2}\cdot[E]=\epsilon^{2n-4}n,$$

$$[\omega_{\epsilon}|_E]^{n-1}=[\omega_{\epsilon}]^{n-1}\cdot c_1([E])=\epsilon^{2n-2},$$

thus 

$$\bar{s}(\omega_{\epsilon}|_E)=\displaystyle\frac{n(n-1)}{\epsilon^{2}}.$$

\begin{proof}[Proof of Corollary \ref{cor SFK}]
    As in the proof of \cite{arezzo2006blowing} Corollary 8.1, we can apply LeBrun-Simanca and obtain a family of cscK metrics $\omega(y)$ such that $\omega(0)=\omega_X$ and $s(\omega(y))<0$ for $y\in[-y_0, 0)$ and $s(\omega(y))>0$ for $y\in(0, y_0]$. Apply Theorem~\ref{Main theorem} to each $\omega(y)$, we obtain a family $\omega_{\epsilon}(y)$ of cscK metrics with a uniform bound for $\epsilon$ as $y$ varies in $[-y_0, y_0]$, denoted by $\ep_0$. Arguing similarly as in \cite{arezzo2006blowing}, we know $\omega_{\epsilon}(y)$ depends continuously on $y$. From the discussion at the beginning of this section, if $\epsilon_0$ is small enough, we know $s(\omega_{\epsilon}(y))<0$ if $y<0$ and $s(\omega_{\epsilon}(y))>0$ if $y>0$. Now that $s(\omega_{\epsilon}(y))$ depends continuously on $y$, there exists $y'$ such that $s(\omega_{\epsilon}(y'))=0$.
\end{proof}

\subsection{Extension of results to more general situations}\label{subsection 9.2}









In this subsection, we extend the arguments to a cscK manifold $X$ which admits non-trivial holomorphic vector fields. We will show that if we choose the blow-up points carefully, the gluing construction we have still work under some modifications. Now to construct cscK metric on $X_{r_{\ep}}$ piece, we define function 

$$G(z)\coloneqq \begin{cases}
    -\log|z| &\text{ when }n=2\\
    |z|^{4-2n} &\text{ when }n\ge3
\end{cases}$$

which are homogeneous solutions to $L_{\omega_{Euc}}$. On the gluing annulus, these can be perturbed to $\tilde{G}$, which are homogeneous solutions to $L_{\omega_X}$ on $B_{r_0}^*$ for sufficiently small $r_0$. We define the deficiency space as in \cite{AP09} Section 4.1

$$D_{\delta}(L_{\omega_X})\coloneqq\text{Span }\{\chi_i\tilde{G_i}, i=1, \cdots, k\}.$$

With this, given the genericity condition on the points, we obtain a surjective operator $   C^{4,\al}_{\dt}(X_p)\times D_{\delta}(L_{\omega_X})\rightarrow C^{0,\al}_{\dt-4}(X_p)$ given by $(\phi, \beta)\mapsto L_{\omega_X}(\phi+\beta).$ For the first perturbation on the gluing annulus, we require an additional term derived from the deficiency space to deal with the complexity introduced by $\mathfrak{k}$ as in \cite{AP09}. The next step is to find the cscK metric on both pieces. Choose $r_{\ep}$ and $R_{\ep}$ as before, on $X_{r_{\ep}}$, the existence of cscK metric follows from \cite{AP09} Proposition 5.1. On $Bl_0 B_{R_{\ep}}\backslash E$, we perturb $a^2\omega_{HS}$ for a given constant $a>0$ and we require $a\in[a_{\min}, a_{\max}]$ for some fixed constants $0<a_{\min}<a_{\max}$. Then, with essentially similar estimates as in Section \ref{section 8}, we solve a modified cscK equation. 

\begin{thm} \label{AP09 theorem modified cscK}
   For a constant $\Lambda>0$ large enough, there exists $c>0$ independent of $\Lambda$ and $\epsilon_{\Lambda}>0$ such that $\forall\ep\in(0, \ep_{\Lambda})$, and for all $\nu\in\R$ and boundary data $\tilde{h}\in C^{4, \al}(\partial B_1), \tilde{k}\in C^{2, \al}(\partial B_1)$ satisfying \ref{conditions for tilde h,k}, and $a\in[a_{\min}, a_{\max}]$, $\nu\in[\nu_{\min}, \nu_{\max}]$, then there exists $\phi_{\ep}\in C^{4, \al}_{\eta, \dt}$, constant $\lambda_{\ep}\in\R$ and $f_{\ep}\in\ker\mathcal D^*_E\mathcal D_E$ such that the scalar curvature of the K\"ahler form
   
   $$\omega_{\epsilon, \tilde{h},\tilde{k}, HS}=a^2\omega_{HS}+\displaystyle\sqrt{-1}\partial\bar{\partial}(H^1_{\ep, \tilde{h}, \tilde{k}}+a^{2n-2}R_{\ep}^{4-2n}G+\phi_{\ep}+\lambda_{\ep}t)$$
   
   defined on $Bl_0 B_{R_{\ep}}\backslash E$ is
   
   $$\ep^2\nu+\mathcal D^*_{\omega_{HS}}\mathcal D_{\omega_{HS}}(\psi_{f_{\ep}})+\frac{1}{2}\langle \nabla\mathcal D^*_{\omega_{HS}}\mathcal D_{\omega_{HS}}(\psi_{f_{\ep}}), \nabla(H^1_{\ep, \tilde{h}, \tilde{k}}+\phi_{\ep}+\lambda_{\ep} t)\rangle.$$
   
   Further,

   $$\|\phi_{\ep}(R_{\ep}\cdot/a)\|_{C^{4, \al}(B_2\backslash B_1)}\le cR_{\ep}^{3-2n}, \quad |\lambda_{\ep}|+\|f_{\ep}\|\le cR_{\ep}^{3-2n-\dt}.$$

   If $\phi_{\ep}, \lambda_{\ep}$ are determined by boundary data $\tilde{h}, \tilde{k}$ and $\phi'_{\ep}, \lambda'_{\ep}$ are determined by boundary data $\tilde{h}', \tilde{k}'$, then

   \begin{equation}\label{AP09 equation n ge 3 nonlinear estimate}
       \|f_{\ep}-f_{\ep}'\|+|\lambda_{\ep}-\lambda'_{\ep}|+\|\phi(R_{\ep}\cdot/a)-\phi'(R_{\ep}\cdot/a')\|\le c_{\Lambda}(R_{\ep}^{\dt-1}\|(\tilde{h}-\tilde{h}',\tilde{k}-\tilde{k}')\|+R_{\ep}^{3-2n}(|\nu-\nu'|+|a-a'|)).
   \end{equation}
\end{thm}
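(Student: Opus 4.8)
The plan is to re-run the contraction-mapping scheme of Theorem~\ref{theorem modified cscK} essentially verbatim, inserting the scaling parameter $a$ and the deficiency-space term into the fixed-point setup and then re-checking the estimates of Lemma~\ref{Lemma L bound} and Lemma~\ref{nonlinear analysis} while tracking the new dependence on $a$. First I would record that scaling $\omega_{HS}$ by $a^2$ (with $a$ constant) multiplies the Lichnerowicz operator by $a^{-4}$, the Ricci form being scale invariant; since $a\in[a_{\min},a_{\max}]$ is bounded away from $0$ and $\infty$, the bounded right inverse $G_{\omega_{HS}}^{-1}$ produced by Theorem~\ref{theorem bounded inverse for cusp} persists with operator norm uniform in $a$, the scaling factors being absorbed into the constants $c$ and $c_{\Lambda}$. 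Hence passing from $\omega_{HS}$ to $a^2\omega_{HS}$ introduces no essential new difficulty beyond enlarging these constants by factors depending only on $a_{\min},a_{\max}$.

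The genuinely new ingredient is the deficiency term $a^{2n-2}R_{\ep}^{4-2n}G$, forced on us by the holomorphic vector fields of $X$ through the surjective operator \ref{change in linear operator by dificiency space} on the base piece. Because $G$ is homogeneous and annihilated by the Euclidean Lichnerowicz operator, we have $L_{\omega_{HS}}G=(L_{\omega_{HS}}-\Delta^2_{Euc})G$, and the AE asymptotics \ref{asymptotics of HS n=2}, \ref{asymptotics of HS n ge 3} show this difference carries the same decay as the error terms $\varphi_2$ appearing there. I would therefore treat $a^{2n-2}R_{\ep}^{4-2n}G$ exactly as $H^1_{\ep,\tilde h,\tilde k}$ was treated in Lemma~\ref{nonlinear analysis}: it is supported toward the AE end, away from the cusp, so that its contribution to $\mathcal N_{\ep}(0,0,0)$ is controlled by the same argument proving \ref{equation nonlinear 0 term}, the prefactor $a^{2n-2}R_{\ep}^{4-2n}$ together with the decay of $L_{\omega_{HS}}G$ and the operator norm of $G^{-1}_{\ep}\circ\mathcal E_{\ep}$ yielding a bound of order $R_{\ep}^{3-2n-\dt}$. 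Concretely I would define $\mathcal N_{\ep}=G^{-1}_{\ep}\circ\mathcal E_{\ep}\circ Q_{\ep}$ as before, now with $H^1_{\ep,\tilde h,\tilde k}$ replaced throughout by $H^1_{\ep,\tilde h,\tilde k}+a^{2n-2}R_{\ep}^{4-2n}G$, and verify that both the initial bound \ref{equation nonlinear 0 term} and the contraction estimate \ref{equation nonlinear difference} survive with constants independent of $a\in[a_{\min},a_{\max}]$.

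For the Lipschitz dependence \ref{AP09 equation n ge 3 nonlinear estimate}, I would enlarge the map $\tilde{\mathcal N}_{\ep}$ to record $a$ alongside $\nu,\tilde h,\tilde k$ and estimate the difference produced by $a\ne a'$. The only $a$-dependent pieces are the rescaled base metric $a^2\omega_{HS}$ and the deficiency term $a^{2n-2}R_{\ep}^{4-2n}G$; differentiating each in $a$ on the compact interval $[a_{\min},a_{\max}]$ and using the uniform bounds on $G$ and its derivatives over the gluing annulus gives a Lipschitz constant of order $R_{\ep}^{3-2n}$, which is precisely the $|a-a'|$ contribution in \ref{AP09 equation n ge 3 nonlinear estimate}. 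Combined with the already-established dependence on $\nu$ and on $\tilde h,\tilde k$ this yields the full estimate, and the contraction mapping theorem on the ball $\|(\phi,f,\lambda)\|\le cR_{\ep}^{3-2n-\dt}$ then produces the fixed point; the stated bounds on $\phi_{\ep},\lambda_{\ep},f_{\ep}$ follow from \ref{equation nonlinear 0 term} and the contraction property as in Theorem~\ref{theorem modified cscK}.

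The main obstacle is bookkeeping rather than conceptual: one must check that every constant in Lemma~\ref{Lemma L bound} and Lemma~\ref{nonlinear analysis} is uniform over the compact parameter range $[a_{\min},a_{\max}]$, and that the deficiency term does not interact with the cokernel structure of $L_{\omega_{HS}}$ identified in Theorem~\ref{linear theorem for the cusp}. Since $G$ is supported near the AE end while the cokernel $\widetilde{\ker\mathcal D^*_E\mathcal D_E}$ lives near the cusp, these two are effectively decoupled and the right inverse of Theorem~\ref{theorem bounded inverse for cusp} applies unchanged, so no genuinely new analytic input beyond the one-point case is required.
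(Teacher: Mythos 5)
Your overall scheme --- rerun the contraction argument of Theorem~\ref{theorem modified cscK}, checking that all constants are uniform in $a\in[a_{\min},a_{\max}]$ --- is indeed what the paper intends (it gives no separate proof of this statement, deferring to ``essentially similar estimates as in Section~\ref{section 8}'' and to \cite{AP09}), and your scaling observation $L_{a^2\omega}=a^{-4}L_{\omega}$ is correct. The gap is in your treatment of the term $a^{2n-2}R_{\ep}^{4-2n}G$. First, $G$ is not ``supported toward the AE end, away from the cusp'': $G$ blows up along $E$, and $\ddb G$ is unbounded with respect to $\omega_{HS}$ near the cusp, so it must be cut off near $E$ --- unlike $H^1_{\ep,\tilde h,\tilde k}$, which is constant near the cusp by construction \ref{choice of H^1}. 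Second, and more seriously, once it is cut off at a radius $\rho=O(1)$, the error it feeds into $\mathcal N_{\ep}(0,0,0)$ is \emph{not} of order $R_{\ep}^{3-2n-\dt}$. Writing $L_{\omega_{HS}}(\chi G)=(L_{\omega_{HS}}-\Delta^2_{Euc})(\chi G)+\Delta^2_{Euc}(\chi G)$, the integrand is $O(r^{2-4n})$ plus cut-off terms concentrated at $r\sim\rho$, and the supremum defining the $C^{0,\al}_{\eta,\dt-4}$-norm, namely $\sup_r r^{4-\dt}\,O(r^{2-4n})$, is attained at $r\sim\rho$, where the weight gives no gain; so this norm is of order $1$. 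The contrast with $H^1_{\ep,\tilde h,\tilde k}$ is that $H^1$ carries the small factor $R_{\ep}^{3-2n}$ from the boundary data \ref{conditions for tilde h,k}, whereas $G$ carries no smallness at the inner radius. Hence the $G$-term contributes to $\mathcal N_{\ep}(0,0,0)$ a term of order $a^{2n-2}R_{\ep}^{4-2n}=R_{\ep}^{1+\dt}\cdot O\bigl(R_{\ep}^{3-2n-\dt}\bigr)$ for $n\ge 3$, and of order $a^2=O(1)$ for $n=2$ (note $a^{2n-2}R_{\ep}^{4-2n}=a^2$ when $n=2$). In either case this exceeds the radius $cR_{\ep}^{3-2n-\dt}$ of the ball on which you run the contraction, so the fixed-point argument does not close and the stated bounds on $\phi_{\ep},\lambda_{\ep},f_{\ep}$ do not follow from your argument.

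The missing idea --- and the mechanism behind the analogous Proposition 5.2 of \cite{AP09} --- is that the $G$-term must not be treated as a perturbation at all: it is the leading tail of an \emph{exact} scalar-flat background. One perturbs around the rescaled family $\omega_{HS,a}\coloneqq a^2\zeta_a^*\omega_{HS}$ with $\zeta_a(u)=u/a$ (this is the source of the rescalings $\phi_{\ep}(R_{\ep}\cdot/a)$ appearing in the statement), which is scalar-flat for every $a$, remains an honest cusp metric near $E$ (no singular potential is ever added), and whose AE expansion is $\ddb\bigl(|u|^2+\mathrm{const}\cdot a^{2n-2}|u|^{4-2n}+\cdots\bigr)$ for $n\ge 3$, respectively $\ddb\bigl(|u|^2+2a^2\log|u|+\cdots\bigr)$ for $n=2$. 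Since $S(\omega_{HS,a})=0$ identically, the $G$-tail contributes nothing to $\mathcal N_{\ep}(0,0,0)$; the parameter $a$ then enters the estimates only through the smooth dependence of this background (hence of the linearized operator and its right inverse) on $a$, which is exactly what produces the $|a-a'|$ term in \ref{AP09 equation n ge 3 nonlinear estimate}. With this correction, the rest of your outline --- uniformity of the right inverse over $[a_{\min},a_{\max}]$, the contraction estimates for the nonlinear terms, and the Lipschitz dependence on $\nu,\tilde h,\tilde k$ --- goes through as in Theorem~\ref{theorem modified cscK}.
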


The matching of the metrics constructed on the two pieces is independent of the Poincar\'e type behavior near the exceptional divisor, and the matching arguments in \cite{AP09} Section 5.3 still hold in our case. Finally, because of the condition (\ref{theorem condition}), the metric on $Bl_pX\backslash E$ is cscK. We postpone the details of this argument to the more complicated situation of extremal  K\"ahler metrics at the end of the following subsection.

\subsection{Extremal K\"ahler metric}\label{subsection 9.3}

We discuss the situation of extremal K\"ahler(extK) metrics in Theorem~\ref{thm application 2}. To solve the extK equation, we need to choose $T$-invariant coordinates when defining weighted H\"older spaces, and we want to solve for $T$-invariant potential functions. Comparing the expansion of $\omega_{HS}$ in \ref{asymptotics of HS n=2} and \ref{asymptotics of HS n ge 3} with those of Burns--Simanca metrics, we see the arguments of perturbations for $\omega_X$ to an extK metric that on \cite{APS11} Proposition 6.1.1. Again, the difference lies in the piece of $Bl_0 B_{R_{\ep}}\backslash E$, where we solve a modified extK equation. 

\begin{thm}\label{theorem modified extK}
    Given $\nu\in\R, X\in\mathfrak t$ satisfying 
    
    $$|\ep^2\nu|+\|X\|\le c,$$

    we assume that the lift of $X$ to $Bl_0\C^n$ vanishes along the exceptional divisor,

    \begin{equation}\label{local version of condition 3}
        X|_E=0.
    \end{equation}

    There exists $\theta>0, c>0$ and for all $\Lambda>0$, there exists $\ep_{\Lambda}>0$ such that for all $\ep\in(0, \ep_{\Lambda})$, $a\in[a_{\min}, a_{\max}]$, and $T$-invariant boundary data satisfying

    $$\|\tilde{h}\|_{C^{4, \al}(\partial B_1)}+\|\tilde{k}\|_{C^{2, \al}(\partial B_1)}\le \Lambda R_{\ep}^{3-2n}, \quad \displaystyle\int_{\partial B_1}(4n\tilde{h}-\tilde{k})dvol=0,$$
    
    there exists a $T$-invariant function $\phi_{\ep}$, a constant $\lambda_{\ep}$ and $f_{\ep}\in\ker\mathcal D^*_E\mathcal D_E$ such that the scalar curvature of the K\"ahler form
   
   $$\omega_{\epsilon, \tilde{h},\tilde{k}, HS}=a^2\omega_{HS}+\displaystyle\sqrt{-1}\partial\bar{\partial}(H^1_{\ep, \tilde{h}, \tilde{k}}+\phi_{\ep}+\lambda_{\ep}t)$$
   
   defined on $Bl_0 B_{R_{\ep}}\backslash E$ is
   
   $$Y_{\ep}+\frac{1}{2}\langle\nabla Y_{\ep}, \nabla(H^1_{\ep, \tilde{h}, \tilde{k}}+\phi_{\ep}+\lambda_{\ep}t)\rangle,$$
   
   where $Y_{\ep}=\ep^4Y+a^{-4}\mathcal D^*_{\omega_{HS}}\mathcal D_{\omega_{HS}}(\psi_{f_{\ep}})$, with $Y$ being the potential function for $X$, and
   
   $$\displaystyle\frac{1}{|\partial B_1|}\displaystyle\int_{\partial B_1}s(\omega_{\epsilon, \tilde{h},\tilde{k}, HS}(R_{\ep}\cdot/a))dvol=\ep^2\nu.$$
   
   Further,

   $$\|\phi_{\ep}(R_{\ep}\cdot/a)\|_{C^{4, \al}(B_2\backslash B_1)}\le cR_{\ep}^{3-2n}, \quad |\lambda_{\ep}|+\|f_{\ep}\|\le cR_{\ep}^{3-2n-\dt}.$$

   If $\phi_{\ep}, \lambda_{\ep}$ are determined by boundary data $\tilde{h}, \tilde{k}$ and $\phi'_{\ep}, \lambda'_{\ep}$ are determined by boundary data $\tilde{h}', \tilde{k}'$, then

   \begin{equation}\label{APS equation n ge 3 nonlinear estimate}
       \|f_{\ep}-f_{\ep}'\|+|\lambda_{\ep}-\lambda'_{\ep}|+\|\phi(R_{\ep}\cdot/a)-\phi'(R_{\ep}\cdot/a')\|\le c_{\Lambda}(R_{\ep}^{\dt-1}\|(\tilde{h}-\tilde{h}',\tilde{k}-\tilde{k}')\|+R_{\ep}^{3-2n}(|\nu-\nu'|+|a-a'|)+R_{\ep}^{4-4n}\|Y-Y'\|).
   \end{equation}
\end{thm}

Note that Condition~\ref{local version of condition 3} is the local version of Condition~\ref{theorem condition} in Theorem~\ref{thm application 2}. Assume we already have this theorem, we know from standard arguments in \cite{APS11} that on $X_{r_{\ep}}$, there exists extK metric $\omega_{\ep, h,k}$ with extremal vector field $X_{\ep, h, k, a}$, then apply Theorem~\ref{theorem modified extK} to

$$\nu=\displaystyle\frac{1}{|\partial B_1|}\displaystyle\int_{\partial B_1}s(\omega_{\ep, h,k})dvol, \quad X=X_{\ep, h, k, a}.$$

With these data, we match the metric on $Bl_0 B_{R_{\ep}}\backslash E$ with the extK metric on $X_{r_{\ep}}$. To prove this theorem, the modified extK equation we want to solve is

\begin{equation}\label{modified extK equation}  s(a^2\omega_{HS}+\displaystyle\sqrt{-1}\partial\bar{\partial}(H^1_{\ep, \tilde{h}, \tilde{k}}+\phi_{\ep}+\lambda_{\ep}t))=Y_{\ep}+\frac{1}{2}\langle\nabla Y_{\ep}, \nabla(H^1_{\ep, \tilde{h}, \tilde{k}}+\phi_{\ep}+\lambda_{\ep}t)\rangle.
\end{equation}

Expanding the scalar curvature operator into its linear $\mathcal D^*\mathcal D$ term and nonlinear $Q$ term, we rewrite the equation as

$$\begin{aligned}
a^{-4}\mathcal D^*\mathcal D(\phi_{\ep}+\lambda_{\ep}t+\psi_{f_{\ep}})
=&Q(H^1_{\ep, \tilde{h},\tilde{k}}+\phi_{\ep}+\lambda_{\ep}t)-\mathcal D^*\mathcal D(H^1_{\ep, \tilde{h}, \tilde{k}})-\ep^4Y\\
&-\frac{1}{2}\langle\nabla(\ep^4Y+a^{-4}\mathcal D^*\mathcal D\psi_{f_{\ep}}),\nabla(H^1_{\ep, \tilde{h}, \tilde{k}}+\phi_{\ep}+\lambda_{\ep}t)\rangle.
\end{aligned}$$

and we denote the RHS by $Q_{\ep}$. We claim that $Q_{\ep}\in \widetilde C^{0,\alpha}_{\eta,\delta-4}$. The scalar curvature part is treated exactly as in Theorem~\ref{theorem modified cscK} and lives in the modified H\"older space. It remains only to check the terms involving $Y$. By Condition~\ref{local version of condition 3}, the lifted vector field satisfies $X|_E=0$. Since $Y$ is the potential of $X$, then along $E$, $\nabla^{1,0}_{\omega_{FS}}(Y|_E)=X|_E=0$. It follows that $Y\in\widetilde C^{0,\alpha}_{\eta,\delta-4}$.

Since $X|_E=0$, the gradient $\nabla Y$ vanishes along $E$ and therefore decays in the cusp direction. Again we have $\nabla(H^1_{\ep,\tilde h,\tilde k}+\phi+\lambda t)$ is bounded, then 

$$\left\langle \nabla Y,\nabla(H^1_{\ep,\tilde h,\tilde k}+\phi+\lambda t)\right\rangle\in C^{0,\alpha}_{\eta,\delta-4}.$$

The terms involving $\mathcal D^*\mathcal D\psi_{f_\ep}$ are handled as in Theorem~\ref{theorem modified cscK}. Hence $Q_\ep\in \widetilde C^{0,\alpha}_{\eta,\delta-4}$.

Again, we have a bounded inverse of the LHS and an extension operator whose norm is independent of $\ep$. Thus it suffices to solve the fixed point problem for $N_{\ep}=G_{\ep}^{-1}\circ\mathcal E_{\ep}\circ Q_{\ep}$.

\begin{lem} \label{lemma nonlinear in APS}
    There is a constant $c>0$ independent of $\Lambda$ and $c_{\Lambda}>0$, $\epsilon_{\Lambda}>0$ such that for $\epsilon\in(0, \ep_{\Lambda})$, 
    
    \begin{equation}\label{equation nonlinear 0 term in APS}
         \|\mathcal{N}_{\ep}(0,0,0)\|_{C^{4, \alpha}_{\eta, \delta}\times \ker\mathcal{D}^*_E\mathcal{D}_E\times\mathbb{R}}\le c R_{\ep}^{3-2n-\dt}.
    \end{equation}

    If 
    
    $$\|(\phi_1, f_1, \lambda_1)\|_{C^{4,\alpha}_{\eta, \delta}\times \ker\mathcal{D}^*_E\mathcal{D}_E\times\mathbb{R}}, \quad\|(\phi_2, f_2, \lambda_2)\|_{C^{4,\alpha}_{\eta, \dt}\times \ker\mathcal{D}^*_E\mathcal{D}_E\times\mathbb{R}}\le 2c R_{\ep}^{3-2n-\dt},$$
    
    then

    \begin{equation}\label{equation nonlinear difference in APS}
        \|\mathcal{N}_{\ep}(\phi_1, f_1, \lambda_1)-\mathcal{N}_{\ep}(\phi_2, f_2, \lambda_2)\|\le c_{\Lambda}(R_{\ep}^{3-2n-\dt}+\ep^4R_{\ep}^4)\|(\phi_1, f_1, \lambda_1)-(\phi_2, f_2, \lambda_2)\|,
    \end{equation}

    and let $\tilde{\mathcal N}_{\ep}$ be the map taking into account the divisor volume $a$ and the set of boundary data $\tilde{h}', \tilde{k}'$, then

    \begin{equation}\label{equation nonlinear difference with boundary in APS}
        \|\tilde{\mathcal N}_{\ep}(\phi, f, \lambda, \tilde{h}, \tilde{k})-\tilde{\mathcal N}_{\ep}(\phi, f, \lambda, \tilde{h}', \tilde{k}')\|\le c_{\Lambda}(R_{\ep}^{-1}\|(\tilde{h}-\tilde{h}', \tilde{k}-\tilde{k}')\|+R_{\ep}^{3-2n-\dt}|\nu-\nu'|+R_{\ep}^{4-4n-\dt}\|Y-Y'\|).
    \end{equation}
    
\end{lem}

\begin{proof}
    The proof follows mostly from \cite{APS11} Proposition 6.2.1. Only note that near the exceptional divisor, we again use Lemma~\ref{Lemma L bound} and mean value theorem.
\end{proof}

We use \cite{APS11} Lemma 7.0.2 to match the metrics on the two pieces and obtain a metric $\omega_{sol}$ on $Bl_pX\backslash E$. To show the $\mathcal D^*\mathcal D(\psi_{f_{\ep}})$ term vanishes, we use the condition (\ref{theorem condition}). To make sense of the condition, we first discuss the definition of the extremal vector field. 

\begin{definition}
    For a compact manifold $X$ with smooth divisor $E$, we say a holomorphic vector field $Z$ is \textit{parallel to} $E$ if their normal components vanish along $E$, i.e., if $E=\{z_1=0\}$, and locally we write $Z=\Re\left(\displaystyle\sum_{i=1}^mf_i\displaystyle\frac{\partial}{\partial z_i}\right)$, then $f_1|_E=0$. We denote the space of holomorphic potentials parallel to $E$ as $h_{E}^{\slash\slash}$.
\end{definition} 

A key property of such vector fields is that they are $L^2$ with respect to Poincar\'e type K\"ahler metric on $X\backslash E$. For $\omega$ a Poincar\'e type K\"ahler metric in $[\omega_X]$, Auvray in \cite{Auv13-2} defined Poincar\'e type Futaki character of $Z\in h_{E}^{\slash\slash}$ with respect to $E$, 

$$\mathcal{F}_{[\omega_X]}^E(Z)\coloneqq\displaystyle\int_{X\backslash E}s(\omega)f_{\omega}^Z\displaystyle\frac{\omega^n}{n!},$$

where $f_{\omega}^Z$ is the holomorphy potential for $Z$ with respect to $\omega$. This quantity is independent of the choice of representative in the given K\"ahler class. For $Z_f, Z_g\in h_{E}^{\slash\slash}$ normalized to $\displaystyle\int_{X\backslash E}f_{\omega}^Z \displaystyle\frac{\omega^n}{n!}=\displaystyle\int_{X\backslash E}g_{\omega}^Z \displaystyle\frac{\omega^n}{n!}=0$, we define the following inner product

$$\langle Z_f, Z_g\rangle\coloneqq\displaystyle\int_{X\backslash E}f_{\omega}^Z g_{\omega}^Z\displaystyle\frac{\omega^n}{n!}.$$

 Take $T\subset K$ a compact torus whose Lie algebra is $\mathfrak{t}$. Then we see $\langle\cdot, \cdot\rangle$ is positive definite on $\mathfrak{t}$. The extremal vector field is then defined by the duality of the Futaki character under this inner product.

\medskip

Note with similar arguments as in the cscK case, and we know that $\nabla^{1,0}s(\omega_{sol})$, when restricted to $E$, becomes the extremal vector field $\nabla^{1,0}s(\omega_{FS})$ on $E$, which equals to zero. On the other hand, from Equation~\eqref{modified extK equation}, we decompose $\nabla^{1,0}s(\omega_{sol})$ into two parts. For the parts involving $Y$, which lives in the space of holomorphy potential, we know from the condition (\ref{theorem condition}) that $\nabla^{1,0}(\ep^4Y)|_E=0$. For the $\mathcal D^*_{\omega_{HS}}\mathcal D_{\omega_{HS}}(\psi_{f_{\ep}})$ term, it gives nonzero terms when restricted to $E$. Hence it has to vanish, and the metric is indeed extremal.

\printbibliography
\addcontentsline{toc}{section}{References}
\Addresses
\end{document}